\documentclass[11pt]{amsart}
\usepackage[
text={450pt,575pt},
headheight=9pt,
centering
]{geometry}

\usepackage{latexsym}
\usepackage[all,cmtip]{xy}
\usepackage{times}
\usepackage{braket}
\usepackage[english]{babel}
\usepackage[utf8]{inputenc}
\usepackage{paralist,url,verbatim}
\usepackage{amscd}
\usepackage[T1]{fontenc}
\usepackage{xspace}
\usepackage{comment}
\usepackage{amsmath,amsthm,amssymb,amsfonts}
\usepackage{mathtools}
\usepackage{enumitem}
\usepackage{faktor}
\usepackage{wasysym}
\usepackage[usenames,dvipsnames]{xcolor}
\usepackage[bookmarksopen=true]{hyperref}
\definecolor{darkblue}{RGB}{0,0,160}
\definecolor{darkgreen}{RGB}{0,80,0}
\hypersetup{
colorlinks,
citecolor=black,
filecolor=black,
linkcolor=darkblue,
urlcolor=darkblue,
}
\setlist[enumerate]{labelindent=0pt, labelwidth =0pt, itemindent=*,leftmargin=0pt, font=\upshape}

\newcommand{\cone}{\operatorname{cone}}

\newcommand{\cT}{\mathcal{T}}
\definecolor{cKlaus}{rgb}{0.15,0.40,0.03}
\definecolor{cKLinkGBU}{rgb}{1,0,0}  
\definecolor{cKLink}{rgb}{0.6,0.2,0.3}
\definecolor{cALink}{rgb}{0,0.3,0}
\usepackage[textsize=tiny]{todonotes}

\usepackage{tikz}
\usetikzlibrary{decorations.markings,intersections,positioning,calc}

  \tikzset{mylabel/.style  args={at #1 #2  with #3}{
    postaction={decorate,
    decoration={
      markings,
      mark= at position #1
      with  \node [#2] {#3};
 } } } }

\theoremstyle{plain}
\newtheorem{theorem}{Theorem}[section]

\newtheorem{corollary}[theorem]{Corollary}
\newtheorem{lemma}[theorem]{Lemma}
\newtheorem{proposition}[theorem]{Proposition}

\theoremstyle{definition}
\newtheorem{remark}[theorem]{Remark}
\newtheorem{definition}[theorem]{Definition}

\newtheorem{example}[theorem]{Example}

\newtheorem{convention}[theorem]{Convention}

\newtheorem*{acknowledgement}{Acknowledgement}

\newcommand{\Hom}{\operatorname{Hom}}

\newcommand{\A}{\ensuremath{\mathbb{A}}}
\newcommand{\NN}{\ensuremath{\mathbb{N}}}

\newcommand{\QQ}{\ensuremath{\mathbb{Q}}}
\newcommand{\Q}{\ensuremath{\mathbb{Q}}}

\newcommand{\Rand}[2]{\ensuremath{\partial_{#1}{ #2} }}
\newcommand{\RR}{\ensuremath{\mathbb{R}}}

\newcommand{\Vrtx}{\operatorname{Vert}}
\newcommand{\VrtxZ}{\operatorname{Vert}_{\in\mathbb{Z}}}
\newcommand{\VrtxnZ}{\operatorname{Vert}_{\notin\mathbb{Z}}}
\newcommand{\ZZ}{\ensuremath{\mathbb{Z}}}
\newcommand{\Z}{\ensuremath{\mathbb{Z}}}

\newcommand{\bfa}{\ensuremath{\mathbf{a}}}

\newcommand{\bfc}{\ensuremath{\mathbf{c}}}
\newcommand{\bfk}{\ensuremath{\mathbf{k}}}

\newcommand{\bfs}{\ensuremath{\mathbf{s}}}
\newcommand{\bft}{\ensuremath{\mathbf{t}}}
\newcommand{\bfx}{\ensuremath{\mathbf{x}}}

\newcommand{\ceil}[1]{\lceil #1 \rceil}

\newcommand{\conv}{\operatorname{conv}}

\newcommand{\im}{\mbox{\rm Image}}

\newcommand{\kenditem}{\vspace{-1ex}\end{itemize}}

\newcommand{\kitem}{\begin{itemize}\vspace{-2ex}}

\newcommand{\ko}{\overline}

\newcommand{\ks}{\scriptstyle}

\newcommand{\ku}{\underline}
\newcommand{\la}{\ensuremath{\lambda}}

\newcommand{\surj}{\longrightarrow\hspace{-1.5em}\longrightarrow}
\newcommand{\then}{\Rightarrow}

\newcommand{\wt}[1]{\widetilde{#1}}

\newcommand{\wtR}{\ensuremath{\widetilde{R}}}

\newcommand{\dual}{^{\vee}}

\newcommand{\kT}{{T}} 
\newcommand{\kS}{{S}} 
 
\newcommand{\ksum}{{a}} 
 
\newcommand{\bound}{{\partial}} 
\newcommand{\height}{{\lambda}} 
\newcommand{\ktT}{{\smash{\wt{T}}}} 
\newcommand{\ktS}{{\smash{\wt{S}}}} 
\newcommand{\ktt}{{\wt{t}}} 
\newcommand{\kts}{{\wt{s}}} 
\newcommand{\kpi}{{\pi}}

\newcommand{\kQuot}{{M}} 
\newcommand{\kQuotR}{{M_{\RR}}} 
\newcommand{\kQuotD}{{N}} 
\newcommand{\kQuotRD}{{N_{\RR}}}

\newcommand{\kP}{P}

\newcommand{\kiota}{{\iota}} 
 
\newcommand{\kpr}{{p}}

\newcommand{\kR}{{R}} 
\newcommand{\kMC}{{C}} 
 
\newcommand{\kMV}{{C^{\mbox{\tiny lin}}}}

\newcommand{\kMT}{{\mathcal T}} 
\newcommand{\kMTD}{{\mathcal T^*}} 
\newcommand{\kMTZ}{{\mathcal T_{\ZZ}}} 
\newcommand{\kMTZD}{{\mathcal T_{\ZZ}^*}}

\newcommand{\kPm}{{m}} 
\newcommand{\cedges}{{\operatorname{edge}}}
\newcommand{\kPr}{{r}} 
\newcommand{\se}{{short}} 
 
\newcommand{\vast}{{\ensuremath{{v_\ast}}}}

\newcommand{\kL}{{L}} 
 
\newcommand{\gens}{{g}}

\newcommand{\oneone}{{(\ku{1};\ku{1},\ku{0})}} 
\newcommand{\tX}{{{\widetilde{X}}}}
 
\newcommand{\baseMfullPre}{{{\mathcal M}}} 
\newcommand{\baseMkick}{{\ko{{\mathcal M}}}}

\newcommand{\spec}{\operatorname{Spec}}

\newcommand{\un}{\underline}
\renewcommand{\span}{\operatorname{Span}}

\newcommand{\lan}{\langle}
\newcommand{\ran}{\rangle}
\newcommand{\id}{\operatorname{id}}

\newcommand{\too}{\ensuremath{\longrightarrow}}

\newcommand{\tail}{\operatorname{tail}}
\newcommand{\kst}{\,:\;}
\newcommand{\kc}{c}
\newcommand{\keta}{\eta}
\newcommand{\ketaZ}{\eta_\ZZ}
\newcommand{\kbb}{\bullet}

\usepackage{tikz-cd}

\newcommand{\inter}{\operatorname{int}}

\newcommand{\cI}{\mathcal{I}}
\newcommand{\cJ}{\mathcal{J}}

\newcommand{\cM}{\mathcal{M}}

\newcommand{\cO}{\mathcal{O}}
\newcommand{\cR}{\mathcal{R}}
\newcommand{\bfT}{\bold{T}}

\newcommand{\bo}{\partial}

    \renewcommand{\la}{\height}
     \newcommand{\tla}{\widetilde{\lambda}}
        
        \renewcommand{\bo}{\bound}
         \newcommand{\tbo}{\widetilde{\partial}}

     \newcommand{\mM}{\baseMkick}

\newcommand{\eps}{\epsilon}
\newcommand{\lam}{\lambda}
  
   \newcommand{\h}{\operatorname{Hom}}
   \renewcommand{\AA}{\mathbb{A}}

\newcommand{\smallestint}[1]{\lceil #1 \rceil}

\newcommand{\roundup}[1]{\smallestint{#1}}

\newcommand{\kteta}{\widetilde{\eta}}
\newcommand{\ktetaZ}{\widetilde{\eta}_\ZZ}
\newcommand{\wteta}{\widetilde{\eta}}
\newcommand{\spann}{\operatorname{Span}}
\newcommand{\Spec}{\operatorname{Spec}}

\newcommand{\te}{\widetilde{\eta}_\ZZ}

\newcommand{\toric}{{\mathbb T \!\mathbb V}}
\newcommand{\kLp}{\widetilde{t}}

\begin{document}
\title{Versality in toric geometry} 

\author[K.~Altmann]{Klaus Altmann
}
\address{Institut f\"ur Mathematik,
FU Berlin,
Arnimalle 3,
D-14195 Berlin,
Germany}
\email{altmann@math.fu-berlin.de}
\author[A.~Constantinescu]{Alexandru Constantinescu}
\address{Institut f\"ur Mathematik,
FU Berlin,
Arnimalle 3,
D-14195 Berlin,
Germany}
\email{aconstant@zedat.fu-berlin.de}
\author[M.~Filip]{Matej Filip}
\address{Laboratory of applied mathematics and statistics,
University of Ljubljana,
Tr\v{z}ai\v{s}ka cesta 25,
SI-1000 Ljubljana,
Slovenia}
\email{matej.filip@fe.uni-lj.si}
\thanks{
   MSC 2010: 
    20M25, 
    14B07, 
    14M25; 
         Key words: versal deformations, toric singularities}

\maketitle

\begin{abstract}
We study deformations of affine toric varieties.
The entire deformation theory of these singularities
is encoded by the so-called versal deformation.
The main goal of our paper is to construct the homogeneous part of some
degree $-R$ of this,
i.e.\ a maximal deformation with prescribed tangent space $T^1(-R)$
for a given character $R$.
To this aim we use the polyhedron obtained by cutting the rational cone 
defining the affine singularity with the hyperplane defined by $[R=1]$.
Under some length assumptions on the edges of this polyhedron,
we provide the versal deformation for primitive degrees $R$.
\end{abstract}

\section{Introduction}
Understanding the deformation theory of a toric variety $X$ and its boundary $\partial X$ is useful for several classification projects.
For example, smoothings of such singularities are used to  compactify moduli spaces of surfaces of general type.
In line with the recent interest in classifying Fano manifolds using Mirror Symmetry \cite{coa,pri,mojcor},
it is conjectured that all low dimensional smooth Fano varieties  can be degenerated to a singular Fano toric variety  \cite{cor}.
By the comparison theorem of Kleppe from \cite{Kle79}, see also \cite[Section 2.1]{chr-ilt} for an overview, understanding deformations of \emph{affine} toric varieties implies understanding deformations of projective toric varieties as well.

The versal base space of an affine toric singularity inherits a torus action, and thus a lattice grading.
Our aim is to construct a maximal deformation in a given primitive degree $-R$.
The rational cone defining the toric singularity, together with the degree $-R$,
can be entirely reconstructed from a rational polyhedron.
For isolated Gorenstein toric singularities the whole versal deformation is concentrated in a single degree (the ``Gorenstein degree'').
Assuming also smoothness in codimension two, the corresponding polyhedron is a lattice polytope with primitive edges. 
The versal deformation for such  toric singularities was obtained in \cite{alt}.
In this paper we drop both the Gorenstein and the smoothness in codimension two assumptions, so the versal base space may have several non-trivial graded components.
As a special case, we obtain yet another point of view for the  deformations of 2-dimensional cyclic quotient singularities (\cite{KSB88,Chr91,Ste91}).\\

We work over an algebraically closed field $k$ of characteristic $0$. Let $\kQuotD$ and $\kQuot$ be dual lattices
and let $\sigma \subseteq (\kQuotD\oplus \ZZ)\otimes_\ZZ\RR$ be a polyhedral cone.
The associated affine toric variety is
\[
   X=\toric(\sigma)=\spec k[S],
\]
where $\kS=\sigma^\vee\cap (\kQuot\oplus\ZZ)$.
The tangent space of the deformation functor of $X$ is
\(
  T^1_X=\operatorname{Ext}^1_{\cO_X}(\Omega^1_X,\cO_X).
\)
It is a $k$-vector space with an $M$-grading induced by the torus action.
For every $R\in M\oplus \ZZ$ we denote by $T^1_X(-R)$ the graded component of $T^1_X$ of degree $-R$.
Taking a cross-cut of $\sigma$ with the affine hyperplane $[R=1]=\{a\in \sigma \kst \lan a,R \ran=1\}$ we obtain a rational polyhedron $\kP$.
Our goal is to start from a polyhedron $P$ as above and construct a maximal deformation for $X$ in degree $-R$.

Our approach is to use the polyhedron $P$ to construct a pair of  monoids $\ktT\subseteq \ktS$ which fit into the following Cartesian diagram\\
\vspace{-1.1em}
\begin{equation}\label{eq tikz b}
\begin{tikzcd}
X=\spec k[S]  \arrow[d,"R"]\arrow[hookrightarrow]{r}
  & \spec k[\ktS] \arrow[d,"\wt{R}"]
\\
\AA^1_k=\spec k[\NN]  \arrow[hookrightarrow]{r}{i} &
\spec k[\ktT]
\end{tikzcd}
\end{equation}
with $\dim \spec k[\ktT]=\dim_kT^1_X(-R)+1$ and  the property that $k[\ktS]$ is a flat $k[\ktT]$-module. 
The monoid $\ktT$ is a generalization of the Minkowski scheme of a lattice polytope and
the monoid $\ktS$ is a generalization of the monoid corresponding to a tautological cone, see \cite{alt}.
The main result of \cite{a} is that the pair $(\ktT,\ktS)$ is a universal extension of the pair $(\NN\!\cdot\!R,~S)$.
Our hope is that this universal extension allows one to construct the versal deformation of $X$.\\

 Here is the idea how to produce a deformation diagram for $X$ from the diagram~(\ref{eq tikz b}).
Assume that $\ktT$ is generated in degree 1, i.e.\ by elements 
$t_0,\ldots,t_\gens\in\ktT$ mapping to $1$
via the map $\ktT\surj\NN$ from \eqref{eq tikz b}. 
From the above assumption we obtain an embedding
$\Spec k[\ktT]\hookrightarrow \Spec k[u_0,\ldots,u_\gens]=\A^{\gens+1}_k$ 
such that the composition 
\(
\Delta:\A^1_\gens=\Spec k[t]\hookrightarrow\Spec k[\ktT] \hookrightarrow \A^{\gens+1}_k
\)
equals the diagonal morphism $1\mapsto\ku{1}$. This means that the corresponding map of $k$-algebras $k[u_1,...,u_g]\to k[t]$ is given by $u_i\mapsto t$.
In particular, this embedding is linear, and we may consider 
the quotient 
\[
\A^{\gens+1}_k \stackrel{\ell}{\surj}
\A^{\gens+1}_k/\Delta:=\A^{\gens+1}_k/(k\cdot\ku{1})=
\Spec k[u_i-u_j\kst 0\leq i,j\leq \gens].
\]
For any given closed subscheme $\baseMkick\subseteq \A^{\gens+1}_k/\Delta$ with 
\(
\baseMfullPre:=\ell^{-1}(\baseMkick)\subseteq\Spec k[\ktT]\subseteq \A^{\gens+1}_k
\)
we obtain the following commutative diagram:
\begin{equation}
  \label{eq imp imp dia}
  \begin{tikzcd}
    X \arrow[hook,r] \ar[d]{}{R}
    & 
    \tX \arrow[hook,r] \arrow[d]{}{\wt{R}}
    &
    \Spec k[\ktS]\arrow[d]{}{\wt{R}}
    \\
    \A^1_k \arrow[hook,r] \arrow[d]
    & 
    \baseMfullPre \arrow[hook,r] \arrow[d]{}{\ell}
    & 
    \Spec k[\ktT] \arrow[hook,r] \arrow[Rightarrow,dl,"\mbox{\tiny maximal}"]
    & 
    \A^{\gens+1}_k\ar[d,"\ell"]
    \\
    0 \arrow[hook,r]
    & 
    \baseMkick \arrow[hook,rr]
    &&
    \A^{\gens+1}_k/\Delta.
  \end{tikzcd}
\end{equation}
The double arrow
indicates that there is a maximal closed 
subscheme $\baseMkick\subseteq\A^\gens_k$ meeting the requirement
$\ell^{-1}(\baseMkick)\subseteq \spec k[\ktT]$. 
The point of the whole construction is the direct interplay between the 
schemes $\spec k[\ktT]$ and $\baseMkick$ -- both refer to different moduli 
problems.
While we will see in Subsection \ref{defHyperplaneSections} that
$\spec k[\ktT]$ is a base space for a deformation of $R^{-1}(0)$, we obtain with
$\baseMkick$ a base space for a deformation of $X$. Indeed, this is a
consequence of the following two facts:
first, the map $\wt{R}:\tX\to\baseMfullPre$ inherits flatness from $\wt{R}:\spec k[\ktS]\to k[\ktT]$. 
Second, since $\baseMfullPre=\ell^{-1}(\baseMkick)$ is a full preimage,
the lower left square in  diagram~(\ref{eq imp imp dia}) is Cartesian with a flat
projection $\ell:\baseMfullPre\surj\baseMkick$. 
We define  $\tilde{X}:=\widetilde{R}^{-1}(\cM)$.

The full details for this are given in Section \ref{sec vers}. 
The main result of this paper is the following theorem.
\begin{theorem}\label{main con intro}
For all compact edges $d$ of $P$ assume that the sub-monoid $\ktT_d\subset \ktT$ is generated by degree $1$ elements. Then the maximal $\baseMkick\subseteq\A_k^\gens$ with 
$\ell^{-1}(\baseMkick)\subseteq \spec k[\ktT]$ 
yields the deformation diagram, which is maximal with prescribed tangent 
space $T^1(-R)$. That is, the family $\tX\to \mM$ cannot be extended to a 
larger deformation of $X$ without enlarging the ambient 
linear space of the base.
\end{theorem}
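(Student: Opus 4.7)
The plan is to verify in two stages: first, that the construction in~(\ref{eq imp imp dia}) yields a flat deformation of $X$ over $\mM$ whose Kodaira--Spencer image lies in $T^1_X(-R)$; second, that any flat extension to a larger $\mM' \subseteq \A^\gens_k$ must collapse back to $\mM$, by reducing the maximality claim to the universal-extension property of $(\ktT,\ktS)$ proved in \cite{a}.

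For the first stage, flatness of $\wt{R}: \Spec k[\ktS] \to \Spec k[\ktT]$ is preserved under base change along the closed embedding $\baseMfullPre \hookrightarrow \Spec k[\ktT]$, giving a flat family $\tX \to \baseMfullPre$. The projection $\ell$ is a trivial $\A^1$-bundle, hence flat, and the lower-left square of~(\ref{eq imp imp dia}) is Cartesian by construction $\baseMfullPre = \ell^{-1}(\mM)$, so $\ell: \baseMfullPre \to \mM$ is also flat. Composing yields a flat family $\tX \to \mM$ whose fiber over $0 \in \mM$ is $X$, since $\ell^{-1}(0) = k\cdot\un{1}$ and, by~(\ref{eq tikz b}), $\wt{R}^{-1}(\A^1_k) = X$. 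The grading by $\wt{R}$ descends through the entire construction, so $T_0\mM \hookrightarrow T^1_X$ lands in the degree-$(-R)$ piece.

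For the second stage, suppose $\mM \subsetneq \mM' \subseteq \A^\gens_k$ supports a flat extension of the family. Setting $\baseMfullPre' := \ell^{-1}(\mM')$, reversing the Cartesian construction produces a flat extension of the pair $(\A^1_k, X)$ over $\baseMfullPre'$. The hypothesis that $\ktT_d$ is generated by degree-$1$ elements for each compact edge $d$ of $P$ is exactly what is needed to fit this extension into the scope of the universal-extension theorem of \cite{a}: it ensures that the edge-wise data re-assemble into a single monoid map, producing a classifying morphism $\baseMfullPre' \to \Spec k[\ktT]$ that restricts to the identity on $\baseMfullPre$. Since $\baseMfullPre' \hookrightarrow \A^{\gens+1}_k$ is already a closed embedding, this classifying morphism is itself a closed embedding, so $\baseMfullPre' \subseteq \Spec k[\ktT]$. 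Maximality in the definition of $\mM$ then forces $\mM' \subseteq \mM$, a contradiction. The main obstacle will be formalizing this reduction: one must check that an arbitrary flat deformation of $X$ over $\mM'$---not assumed \emph{a priori} to come from toric data---really does yield an extension of the pair in the precise sense required by \cite{a}, and that the edge-generation hypothesis suffices to guarantee this translation globally rather than only locally near each edge of $P$.
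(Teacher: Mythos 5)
Your first stage (flatness of $\tX\to\mM$ and the identification of the tangent space with $T^1_X(-R)$) is fine and agrees with what the paper does in Section~\ref{sec vers} via Proposition~\ref{prop-straightFlat} and Proposition~\ref{prop-compTOne}. The gap is in your second stage, and it is not a formality that can be patched: it is the entire content of the theorem. The universal-extension theorem of \cite{a} is a statement about \emph{co-Cartesian extensions of monoids}; an arbitrary flat deformation of $X$ over a larger base $\mM'\subseteq\A^\gens_k$ carries no monoid structure, so there is no ``classifying morphism $\baseMfullPre'\to\Spec k[\ktT]$'' to be had from that theorem. The paper itself flags this: it says only that one ``hopes'' the universal extension allows one to construct the versal deformation, and then spends Sections~\ref{sec flat}--\ref{sec-clusterCoord} closing exactly the gap you defer to at the end of your proposal. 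Your final sentence concedes the missing step, but without it the argument proves nothing beyond the first stage.

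What the paper actually does is obstruction-theoretic. Maximality of $\mM$ with prescribed tangent space is equivalent (by \cite[Section 4]{jong}) to injectivity of the obstruction map $o\in T^2_X\otimes_k W=\Hom(W^*,T^2_X)$, where $W=\cJ/\widetilde{\cJ}$ collects the higher-degree parts of the equations of $\Spec k[\ktT]$ (Definition~\ref{def i part}). Equivalently, one must show the adjoint $o^*$ is surjective onto $W$. This is carried out in three steps: (i) the image of $o^*$ contains all degree-$n$ parts of the loop equations (Corollary~\ref{cor loop cor}, resting on the bilinearity Lemma~\ref{lem imp lem} and the $T^2$-computation in Proposition~\ref{prop des obs map}); (ii) it contains the degree-$n$ parts of the local equations attached to each compact edge (Proposition~\ref{pro image h g}); and (iii) loop plus local relations generate all relations among the generators of $\ktT$ (Proposition~\ref{prop-ttildeRel}, via the cluster coordinates of Section~\ref{sec-clusterCoord}). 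The hypothesis that each $\ktT_d$ is generated in degree $1$ is used concretely in step (i) to guarantee $\span_k(E^{nR}_{a^i}\cap E^{nR}_{a^j})\cong k^d$ (equation~\eqref{eq lep eq}; see Remark~\ref{rem finrem} for why degree-$1$ generation of $\ktT$ alone is not enough) and in step (ii) to identify $\ktT_{ij}$ with an explicit cone whose equations can be hit by $o^*$. None of this machinery appears in your proposal, and the reduction you propose in its place does not go through.
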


Note that Theorem \ref{main con intro}  has been shown in \cite{alt} and \cite{budapest} for the special case of $X$ lacking singularities in codimension
two, which is a very special case of $\ktT_d$ being generated by degree 1 elements for all compact edges $d$ of $P$, see Section \ref{sec vers}. Our main result holds more general and is obtained with different techniques than in \cite{alt} and \cite{budapest}.\\

The first part of the paper (Sections~\ref{sec the main monoids}~to~\ref{sec flat})  focuses on the monoids $\ktT$ and $\ktS$.
The main results there are the explicit descriptions of the generators of $\ktT$  and $\ktS$ (Proposition~\ref{prop generators t} and Corollary~\ref{cor gen ts}, respectively), and of the relations among them (Section~\ref{r:projectionOfSum} and Prop~\ref{prop tildelam}).
An important feature is that the generators of $\ktT$ can be computed knowing only the compact edges of $P$.
In  Section~\ref{sec vers} we return to algebraic geometry, and introduce our main result: Theorem~\ref{th main}.
 Sections~\ref{sec loop}~to~\ref{sec-clusterCoord} are dedicated to the proof of the main result, which is obtained by proving that the obstruction map is injective.\\

\begin{acknowledgement}
\small 
We are greatful to Alessio Corti for his interest in this work and many useful conversations. 
\end{acknowledgement}

\section{Preliminaries}\label{sec the main monoids}
In this section we recall the construction of  $\ktT\subseteq \ktS$ from \cite{a}. 
For our main result we will then exploit new properties of the monoids $\ktT$ and $\ktS$:
their generators (cf. Section~\ref{sec sem ktt}), their relation to flatness (cf. Section~\ref{sec:Diagrams})
and the syzygies of the corresponding semigroup rings (cf. Section~\ref{sec flat}). 

\subsection{The setup}\label{sec setup}
Throughout the paper  $\kQuotD$ is a lattice of finite rank, that is $\kQuotD\simeq\ZZ^n$ for some $n\in\NN$, and $M=\Hom_\ZZ(N,\ZZ)$  the dual lattice of $\kQuotD$. Let  $\kP\subseteq \kQuotRD :=\kQuotD\otimes\RR$ be a rational, convex polyhedron.
We  embed $\kP$ in an affine hyperplane of height one of $\kQuotRD\oplus\RR$ and take the cone over it:
\[
  \sigma:=\cone(\kP,1)\subseteq \kQuotRD\oplus\RR.
\]
We define the monoid $\kS:=\sigma^\vee\cap (\kQuot\oplus\ZZ)$.
This monoid contains the lattice-primitive element $\kR=(\ku{0},1)$ spanning the discrete ray $\kT:=\NN\cdot\kR$.
Thus, starting from $\kP$ we construct the pair of monoids $\kT\subseteq\kS$.
Our objective is to study the deformations of the affine toric variety 
\[
  X:=\toric(\sigma):=\spec k[S].
\]
To this aim, we will use co-Cartesian extensions, cf.~\cite[Definition 3.1]{a}.
The main result of~\cite{a} was the construction of a universal co-Cartesian extension 

\[
\begin{tikzcd}
   \ktT \arrow[r,hook]\arrow[d,swap]{}{\kpi_{\kT}}
   &
   \ktS \arrow[d]{}{\kpi_{\kS}}
   \\
   \kT \arrow[r,hook]
   &
   \kS.
 \end{tikzcd}
\]

The polyhedron $\kP$ may not be bounded, meaning that its tail cone
\[
\tail(\kP):=\{a\in\kP-\kP\kst a+\kP\subseteq\kP\}
\]
may be not trivial.
Every element $\kc\in\kQuotR$ is a linear form on $\kQuotRD$, and
$\kc$ is bounded below on $\kP$ if $\kc\in\tail(\kP)\dual\subseteq\kQuotR$. 
It is easy to see that the minimum is achieved at some vertex.
For every $\kc\in\tail(\kP)\dual$  we choose and fix one such vertex $v(\kc)$.
While this choice is not unique, the value of the upcoming numbers
$\keta(\kc)\in\QQ$ and $\ketaZ(\kc)\in\ZZ$ will not depend on it.
\begin{definition}\label{d:eta(c)}
For every linear form $\kc\in \tail(\kP)^\vee$ define
\[
  \keta(\kc) :=-\min_{v\in\kP}\braket{v,\kc}=
-\langle v(\kc),\kc\rangle \in\RR. 
\]
This is not always an integer, and we denote the round up to the next integer by $\ketaZ(\kc):= \ceil{\keta(\kc)}\geq\keta(\kc)$.
\end{definition}
Note that $\sigma^\vee=\{[\kc,\keta(\kc)]\kst \kc\in \tail(\kP)^\vee\} + \RR_{\geqslant 0}\cdot[\un{0},1].$
The Hilbert basis of $\kS$ has the form
\begin{equation}\label{eg hilbbas}
\big\{s_1=[c_1,\ketaZ(c_1)],\dots,s_r=[c_r,\ketaZ(c_r)],R:=[\un{0},1]\big\},
\end{equation}
with uniquely determined elements $\kc_i\in \tail(\kP)^\vee\cap\kQuot$.

\subsection{Short edges}
\label{newAmbient}
We denote the set of vertices and the set of compact edges of the polyhedron $\kP$ by
\[
\Vrtx(\kP)=\{v^1\dots,v^\kPm\}
\hspace{1em}\mbox{and}\hspace{1em}
\cedges(\kP):=\{d^1,\ldots,d^\kpr\},
\]
respectively.
If an edge $d^\nu$ connects the vertices $v^i,v^j$, then we will
also denote it by $d^\nu=d^{ij}=[v^i,v^j]$. 
Alternatively, we might equip it with an orientation
by either understanding it as a vector $d^{ij}=v^j-v^i\in\kQuotRD$
or as a half open segment $d^{ij}=[v^i,v^j)$ which is, of course, no longer
compact.

\begin{definition}
\label{def-shortEdge}
To each bounded half open edge $d=[v,w)$ of $\kP$
we associate the positive integer
\[
g_d := \min\{g\in\ZZ_{\geqslant 1}\kst 
\text{the {\em affine line} through }gv \text{ and } gw 
\text{ contains lattice points}\} \geq 1.
\]
We call $d=[v,w)$ a \emph{short half open} edge if
\[
\;\#\{g_d\cdot [v,w)\cap\kQuotD\}\leq g_d-1.
\]
Moreover, we call $d=[v,w]$ a {\em\se} edge if both $[v,w)$ and $[w,v)$ are {\em\se} half open edges.
\end{definition}

In particular, the vertex $v$ of a \se\ half open edge $[v,w)$
 never belongs to the lattice $\kQuotD$. Moreover,
if at least one of the half open edges $[v,w)$ or $[w,v)$ is \se,
then $\ell(w-v)<1$ where $\ell$ denotes the lattice length
-- this is defined as the homogeneous function on $\kQuotRD$ such
that any primitive element of $\kQuotD$ has lattice length one.

\begin{example}
  \label{ex-CQS}
  \begin{enumerate}[label={\arabic*)}, ref ={\arabic*.}]

\item In the one-dimensional case, that is when $\kP=d=[v,w]\subset \RR$ we always have $g_d=1$. The edge $d$ is \se\ if and only if $[v,w]\cap \ZZ=\emptyset$. In particular, the edge $d=[-\frac{1}{m},\frac{1}{n}]\subset\RR$ with $n,m\in \NN$ is never short. 
  
\item \label{item:CQS2} Take $d=\big[(-\frac{1}{6},\frac{1}{2}),\;  (\frac{2}{3},\frac{1}{2})\big]\subset \RR^2$. 
We need to multiply $d$ with $2$ to produce lattice points on the affine line and thus $g_d=2$.
Since $\#\{g_d\kP\cap\kQuotD\}=2$ we see that
both half open edges are not \se.
\item  Take $d=\big[(\frac{1}{2},1),(\frac{3}{4},\frac{5}{4})\big]\subset \RR^2$. Also in this case we have $g_d=2$, so $g_d\cdot d = \big[(1,2),(\frac{3}{2},\frac{5}{2})\big]$, which contains exactly one lattice point. So both half-open edges are short.
\end{enumerate}
\end{example}

It is well-known that the set of Minkowski summands of scalar multiples
of $\kP$ carries the structure of a convex, polyhedral cone
$\kMC(\kP)$,
i.e.\ each $\xi\in\kMC(\kP)$ represents a Minkowski summand
$\kP_\xi$, see \cite[Section 2.2]{alt}. Note that
$\kMV(\kP):=\kMC(\kP)-\kMC(\kP)\subseteq\RR^\kpr$ is a linear subspace 
with coordinates $t_{\nu}$ encoding the dilation of the compact edges.
It is defined by the equations
\begin{equation}\label{2 face equation}
\sum_{d^\nu\in \eps}\delta_\eps(d^\nu)\cdot t_{\nu}\cdot d^{\nu}=0
\end{equation}
where $\eps\leq \kP$ runs through all compact 2-dimensional faces
and $\delta_\eps(d^\nu)\in\{0,1,-1\}$ is chosen such that the edges 
$\delta_\eps(d^\nu)\cdot d^{\nu}\in\kQuotRD$ form a cycle 
along the boundary of $\eps$.

\begin{definition}\label{def-TP}
If $t_{ij}$ denotes the dilation factor for the compact edge $[v^i,v^j]\leq\kP$
and $s_i$ is the coordinate on $\RR^\kPm$ referring to the vertex $v^i$,
then we define
\[
\textstyle
\kMT(\kP):=\big\{ (t,s) \in \kMV(\kP)\oplus \RR^\kPm \kst 
\begin{array}[t]{ll}
s_i = 0 &\mbox{if} \hspace{0.5em} v^i\in\kQuotD,
\\
s_i = s_j & \mbox{if}\hspace{0.5em} [v^i,v^j]\leq\kP \text{ with }
[v^i,v^j]\cap\kQuotD=\emptyset, \text{ and }
\\
s_i=t_{ij} & \mbox{if}\hspace{0.5em} [v^i,v^j) 
\text{ is a half open \se\ edge} \big\}.
\end{array}
\]
\end{definition}

Note that the vector space $\kMT(\kP)$ contains a distinguished element
$\oneone=[\kP]$ which is defined by $s_i:=0$ for $v^i\in\kQuotD$ and
$s_j:=1$ and $t_{ij}:=1$ for all remaining coordinates.
In the upcoming sections we will often deal with the dual vector space
$\kMTD(\kP)$, where elements $s_i,t_{ij}\in\kMTD(\kP)$ form 
a generating set. We could easily omit the elements 
$s_i=0$ for $v^i\in\kQuotD$. However, while they are just zero, 
there existence will simplify some formulae. Let
\begin{equation}\label{def map p}
\pi:\cT^*(P)\to \RR
\end{equation}
be the map that sends the generators 
$t_{ij}\in\kMTD(\kP)$ to $1$ 
and $s_i\in\kMTD(\kP)$ to $1$ or $0$ depending on  
$v^i\notin N$ or $v^i\in N$, respectively. Note that this map is well-defined. 

\begin{proposition}
\label{prop-compTOne}
For any rational polyhedron $\kP$ and for  $R=[\un{0},1]\in\kQuot$ we have
\[
  T_{X_\sigma}^1(-\kR)=\faktor{\big(\kMT(\kP)\otimes_{\RR}k\big)}{k\cdot\oneone}.
\]

\end{proposition}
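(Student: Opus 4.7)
The plan is to compute $T^1_{X_\sigma}(-R)$ via a direct combinatorial analysis, exploiting the torus grading to reduce the Ext computation to one in a single fixed degree. For an affine toric variety, $T^1$ decomposes as a direct sum over the characters of the torus, and each graded piece $T^1_X(-R)$ can be obtained by analyzing the linear space of first order deformations compatible with the binomial relations in $k[S]$ in the prescribed degree. I would begin by writing $X$ as the vanishing locus of the toric ideal among the Hilbert basis elements $s_1,\dots,s_r,R$ from~\eqref{eg hilbbas}, and describing an infinitesimal first-order deformation in degree $-R$ as an assignment of parameters to these relations that respects all syzygies.

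Concretely, I would parameterize such a deformation by an ``edge dilation'' $t_{ij}\in\RR$ for each compact edge $[v^i,v^j]$ of $P$ and a ``vertex shift'' $s_i\in\RR$ for each vertex $v^i$. The equations~\eqref{2 face equation} are then forced by the requirement that the boundary of each compact 2-face of $P$ closes up after perturbation; this gives membership in $\kMV(P)$. The additional constraints in Definition~\ref{def-TP} arise from the binomial relations in $S$ that involve a Hilbert basis element sitting above a vertex: a lattice vertex $v^i\in N$ contributes no free parameter (forcing $s_i=0$); two non-lattice neighbors joined by an edge without lattice points must move in lockstep ($s_i=s_j$); and a short half-open edge $[v^i,v^j)$ couples the vertex shift to the edge dilation ($s_i=t_{ij}$) because in this case the Hilbert basis element $[c,\lceil\eta(c)\rceil]$ attached to the facet at $v^i$ is pinned combinatorially to the lattice distribution along the edge, as quantified by $g_d$ in Definition~\ref{def-shortEdge}. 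Trivial deformations come from homothetically rescaling $P$, which is exactly the distinguished vector $\oneone$, so modding out by $k\cdot\oneone$ yields the versal tangent space in degree $-R$.

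The main obstacle will be the bookkeeping at short edges. In \cite{alt}, where all edges are primitive and all vertices are lattice, no $s_i$-variables appear and the argument reduces cleanly to Minkowski summands. Here one must check exactly which syzygies among the Hilbert basis elements are forced near a short half-open edge $[v^i,v^j)$, and verify that they produce the single linear relation $s_i=t_{ij}$ and no further ones; dually one has to rule out spurious relations in the non-short case. This is a careful edge-by-edge local analysis, but once the local contributions match Definition~\ref{def-TP}, assembling them globally identifies the space of first order deformations with $\kMT(P)\otimes_\RR k$, and passing to the quotient by $k\cdot\oneone$ gives the claimed description of $T^1_{X_\sigma}(-R)$.
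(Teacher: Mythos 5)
Your overall strategy---decompose $T^1_X$ by torus characters, realize the degree $-R$ piece as perturbations of the binomial relations among the Hilbert basis elements modulo trivial ones, and match the resulting linear conditions against Definition~\ref{def-TP}---is exactly the substance of what the paper invokes: its proof is a one-line reduction to \cite[Theorem 2.5]{ka-flip}, checking that the equations called $G_{jk}$ there coincide with the defining equations of $\kMT(\kP)$. So you have the right route, but what you have written is a plan, not a proof, and the step you defer is the entire mathematical content of the proposition.

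Two concrete gaps. First, the identification of $T^1_X(-R)=\operatorname{Ext}^1_{\cO_X}(\Omega^1_X,\cO_X)(-R)$ with ``parameters attached to the relations, compatible with all syzygies, modulo coordinate changes'' is not automatic for a non-isolated, non-hypersurface toric singularity; it rests on the cotangent/Harrison-cohomology description of $T^1$ for semigroup algebras (\cite{cup}, \cite{klaus}), which is precisely what \cite{ka-flip} packages into a closed formula in a fixed degree. Second, the derivation of the three families of constraints in Definition~\ref{def-TP} is where all the work lies, and you only gesture at it: the claim that a \se\ half open edge $[v^i,v^j)$ forces exactly the single relation $s_i=t_{ij}$ ``because the Hilbert basis element is pinned combinatorially'' is not an argument. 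One must actually analyze, for $g_d\geq 2$, which Hilbert basis elements $[c,\ketaZ(c)]$ contribute in degree $-R$ near that edge and how the rounding $\keta(c)\mapsto\ketaZ(c)$ interacts with the lattice points of $g_d\cdot[v^i,v^j)$ --- the same counting that underlies Proposition~\ref{prop-degtt-kshort} --- and then rule out any further relations in the non-short case. Without this, neither inclusion between your parameter space and $\kMT(\kP)\otimes_\RR k$ is established. As written, your proposal would be acceptable only as a reduction to \cite{ka-flip} (which is what the paper does), not as an independent proof.
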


\begin{proof}
Essentially, this corresponds to \cite[Theorem 2.5]{ka-flip}.
One has just to check that the equations called $G_{jk}$ in
\cite[(2.6)]{ka-flip} coincide with those 
in the definition of the $\RR$-vector space $\kMT(\kP)$.
\end{proof}

\subsection{The lattice structure in $\kMT(\kP)$}
\label{ssec:theLatticeStructureInT(P)}
\begin{definition}\label{d:latticeInT} 
We define the subgroup $\kMTZ(\kP)\subset\kMT(\kP)$ by
\[
\begin{array}{rcl}
  (\bft,\bfs)\in \kMTZ(\kP) &:\iff&
  \begin{cases}
    s_i\in \ZZ &\forall~ v^i \in \Vrtx(\kP),\\
    (t_{ij}-s_i)v^i - (t_{ij}-s_j)v^j 
     \in \kQuotD &\forall~ [v^i,v^j]\in \cedges(\kP).
  \end{cases}
\end{array}
\]
\end{definition}
\noindent Clearly $\kMTZ(\kP)$ is a subgroup of $\kMT(\kP)$, thus it is torsion-free and Abelian. Moreover, it is easy to see that it is a free Abelian group satisfying
$\kMTZ(\kP)\otimes_{\ZZ}\RR=\kMT(\kP)$, see also \cite[Lemma 5.14]{a}.
Using the dual lattice $\kMTZD(\kP)$, the two conditions of Definition \ref{d:latticeInT} can be rephrased as:
\begin{eqnarray*}
  s_i&\in&\kMTZD(\kP),\mbox{ and}\\
\kL_{ij}:=(t_{ij}-s_i)\otimes v^i -
(t_{ij}-s_j)\otimes v^j
&\in& \kMTZD(\kP)\otimes_{\ZZ}\kQuotD.
\end{eqnarray*}

\subsection{The  main monoids}\label{subSectMainMon}
For the upcoming constructions we need to choose and fix a reference vertex $\vast\in\kP$.
We establish the following convention, which may require  shifting
$\kP$ by a lattice vector. 
\begin{convention}\label{conv:v*=0}
Whenever $\vast\in\kP$ belongs to the lattice $N$, we assume that $\vast=0$.
\end{convention}
For every  $\kc\in \tail(\kP)^\vee$ we choose a path 
$
\vast = v_0, v_1, \ldots ,v_k = v(\kc)
$
along the compact edges of $\kP$, and split $-\keta(\kc)$
from Definition~\ref{d:eta(c)} as a sum in the following way:
\[
  \textstyle
  -\keta(\kc)=\braket{ v(\kc),\,\kc}=\braket{\vast,\,\kc} + \sum_{j=1}^k \braket{ (v_{j}-v_{j-1}),\,\kc}.
\]
This leads us to the next definition.
\begin{definition}\label{d:etatilde(c)}
For every $\kc\in\tail(\kP)^\vee$, we define $\kteta(\kc)\in \cT^*(\kP)$ as
\[
\textstyle
\wteta(\kc):= -\braket{\vast,\,\kc}\cdot s_\vast - 
\sum_{j=1}^k  \braket{(v_{j}-v_{j-1}),\,\kc} \cdot t_{j-1,\,j}.
\]
Note that the first summand, i.e.\ 
the $s_\vast$ part, vanishes if $\vast\in \kQuotD$. 
\end{definition}
It is easy to see that
the definition of $\wteta(\kc)\in \kMTD(\kP)$ does neither depend on the
choice of the vertices $v_{*}$ and $v(\kc)$, nor on the choice of the path connecting
$\vast$ and $v(\kc)$.
Note also that, due to Convention~\ref{conv:v*=0}, $\wteta(\kc)$ is always a lifting of
$\keta(\kc)$ via the map $\kpi$ from the equation \eqref{def map p}.

\begin{definition}
\label{def-etsTildeZ}
For every $\kc\in \tail(\kP)^\vee\cap\kQuot$ we define 
\[
  \textstyle
\ktetaZ(\kc):=
\wteta(\kc) + \big(\ketaZ(\kc)-\keta(\kc)\big)\cdot s_{v(\kc)}
=
\ketaZ(c)\cdot s_{v(c)}+\sum_{j=1}^kL_{j-1,j}(c)
\;\in\cT^*_{\ZZ}(\kP),
\]
where $L_{j-1,j}(c):=\lan L_{j-1,j},c\ran$.
Moreover, for
$\kc_1,\kc_2\in \tail(\kP)^\vee\cap\kQuot$ we measure convexity via
\[
\ktetaZ(\kc_1,\kc_2) := 
\ktetaZ(\kc_1)+\ktetaZ(\kc_2) - \ktetaZ(\kc_1+\kc_2)
\;\in\cT^*_{\ZZ}(\kP).
\]
\end{definition}
\noindent The main monoids which provide a universal extension in \cite[Theorem 8.2]{a},
and which we will analyse in order to produce a maximal deformation, are the following.
\begin{definition}\label{def:upperSemigroups}
  For every rational polyhedron $\kP$, in the above notation, define:
\begin{eqnarray*}
\ktT &:=& 
\spann_{\NN}\{[0,\ktetaZ(\kc_1,\kc_2)]\kst \kc_1,\kc_2\in \tail(\kP)^\vee\cap \kQuot\}\\
  \ktS &:=&
           \ktT + \spann_{\NN}\{[\kc,\ktetaZ(\kc)]\kst \kc\in \tail(\kP)^\vee\cap \kQuot\},
\end{eqnarray*}
with $\ktT\hookrightarrow \ktS\subset M\oplus \cT^*_{\ZZ}(\kP)$.
These two objects fit in the following diagram
\[
  \xymatrix{
    \ktT~ \ar@{^{(}->}[r] \ar@{->}[d]_{\kpi_{\kT}}
    &
    \ktS \ar@{->}[d]^{\kpi_{\kS}}\\
    \NN \ar@{^{(}->}[r] &  S
  }
\]
with vertical maps induced by $t_{\kbb,\kbb},s_v\mapsto 1$ for $v\not\in N$ and $s_v\mapsto 0$ for $v\in N$.
We call  $\kpi_T:\ktT\to T=\NN$ the \emph{degree map}. 
\end{definition}

In particular, for $\kc_1,\kc_2,\kc\in \tail(\kP)^\vee\cap\kQuot$ and $\kLp\in \ktT$, we have
\begin{eqnarray*}
  \ktetaZ(\kc_1,\kc_2)&\stackrel{\kpi_\kT}{\longmapsto}& \ketaZ(\kc_1)+\ketaZ(\kc_2)-\ketaZ(\kc_1+\kc_2)\in \NN,\text{~~and}\\
  \kLp+[\kc,\ktetaZ(\kc)]&\stackrel{\kpi_\kS}{\longmapsto}&[\un{0}, \kpi_\kT(\kLp)]+[\kc,\ketaZ(\kc)]\in S.
\end{eqnarray*}

\section{Explicit generators of $\ktT$}\label{sec sem ktt}

We start this section  by analysing the monoid $\ktT$.
We denote the set of non-lattice vertices of $\kP$, that is the set of vertices that are not contained in $N$, with $\VrtxnZ(\kP)$.
We will write $\VrtxZ(\kP)$ for the set of lattice vertices, so $\VrtxnZ(\kP)=\Vrtx(\kP)\setminus\VrtxZ(\kP)$.
Moreover, for real numbers $z\in \RR$ we will quite often use the following notation:
\begin{equation}\label{eq fun f}
\{z\}:=\roundup{z}-z.
\end{equation}
In particular, the following lemma trivially holds.
\begin{lemma}\label{lem trivial}
For each $z_1,z_2\in \RR$ there is either
$\{z_1+z_2\}=\{z_1\}+\{z_2\}$	or $\{z_1+z_2\}+1=\{z_1\}+\{z_2\}$.
\end{lemma}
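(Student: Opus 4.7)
The plan is to reduce the statement to an elementary ceiling inequality. Writing the definition out, $\{z\} = \lceil z \rceil - z$, so the quantity to analyze is
\[
 \{z_1\} + \{z_2\} - \{z_1 + z_2\} \;=\; \lceil z_1\rceil + \lceil z_2\rceil - \lceil z_1 + z_2\rceil,
\]
which is manifestly an integer. It therefore suffices to show that this integer always lies in $\{0, 1\}$, which gives exactly the two cases in the statement.

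For the lower bound I would observe that $\lceil z_i\rceil \geq z_i$, so $\lceil z_1\rceil + \lceil z_2\rceil \geq z_1 + z_2$; since the left-hand side is an integer, it is at least $\lceil z_1 + z_2\rceil$. For the upper bound I would use $\lceil z_i \rceil < z_i + 1$, yielding $\lceil z_1\rceil + \lceil z_2\rceil < z_1 + z_2 + 2 \leq \lceil z_1+z_2\rceil + 2$, hence (being an integer) at most $\lceil z_1+z_2\rceil + 1$. Combining these two bounds pins the integer $\lceil z_1\rceil + \lceil z_2\rceil - \lceil z_1+z_2\rceil$ to $\{0,1\}$, and substituting back gives the dichotomy $\{z_1+z_2\} = \{z_1\} + \{z_2\}$ or $\{z_1+z_2\} + 1 = \{z_1\} + \{z_2\}$.

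There is really no obstacle here; the lemma is justifiably called ``trivial''. The only thing to be slightly careful about is that one uses the strict inequality $\lceil z_i\rceil < z_i + 1$ rather than $\lceil z_i\rceil \leq z_i + 1$, so as to obtain a strict inequality that then rounds down to the desired $\leq \lceil z_1+z_2\rceil + 1$ bound on integers.
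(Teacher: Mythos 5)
Your proof is correct, and since the paper offers no proof at all (it simply asserts that the lemma ``trivially holds''), your ceiling-inequality argument is precisely the standard verification being implicitly invoked: the quantity $\{z_1\}+\{z_2\}-\{z_1+z_2\}=\lceil z_1\rceil+\lceil z_2\rceil-\lceil z_1+z_2\rceil$ is an integer pinned to $\{0,1\}$ by the bounds $z\leq\lceil z\rceil<z+1$. Your attention to the strict inequality in the upper bound is exactly the right point of care.
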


Let $c\in M$.
For a compact edge with vertices $v^i$  and $v^j$ let $d^{ij}:=v^j-v^i$ be the oriented edge. 
From the elements appearing in the following definition we will get later
the explicit generators of $\ktT$.

\begin{definition}\label{def 53}
  Let $c\in M$. Assume that $\lan c, d^{ij}\ran\geq 0$. Then we define
\[
\kLp(c, d^{ij}):= \lan c,d^{ij}\ran \,t_{ij}+\{\lan
c,v^j\ran\}s_{v^j}-\{\lan c,v^i\ran\}s_{v^i}.
\]
Moreover, we set $\kLp(c, -d^{ij}):=\kLp(c, d^{ij})$. In particular, the $t_{ij}$-coefficient is always non-negative.
\end{definition}
Note that in the previous definition we do not restrict only to $c\in M\cap \tail(\kP)^\vee$,
but allow any $c\in M$.  

\begin{remark}\label{rem minus}
For $\lan c,d^{ij}\ran=\lan c,v^j-v^i\ran\geq 0$ the following holds: 
\[
  \kLp(-c, d^{ij})=\kLp(-c, -d^{ij})=\kLp(-c,v^i-v^j)=\lan c,d^{ij}\ran \,t_{ij}+\{\lan -c,v^i\ran\}s_{v^i}-\{\lan -c,v^j\ran\}s_{v^j}.
\]
In particular, 
$\kLp(-c, d^{ij})= \kLp(c, d^{ij})-s_{v^j}+s_{v^i}$ unless
$\lan c,v^i\ran, \lan c,v^j\ran\in \Z$. If one of these is integral, then the corresponding $s_{v^i}$ or $s_{v^j}$ has to be omitted  in the previous relation. 
\end{remark}

\begin{remark}\label{rem 53}
One should compare the previous definition with that
of the elements $\kL_{ij}\in \kMTZD(\kP)\otimes_{\ZZ}\kQuotD$ of
Subsection~\ref{ssec:theLatticeStructureInT(P)}. Indeed, for a given
$c\in \tail(\kP)^\vee\cap M$ being non-negative on $d^{ij}$, we have
\[
-\kL_{ij}(c)= \lan c,d^{ij}\ran \,t_{ij}
-\lan c,v^j\ran s_{v^j}+\lan c,v^i\ran s_{v^i},
\]
i.e.\ this differs from $\kLp(c, d^{ij})$ by the integral
$\roundup{\lan c,v^j\ran}s_{v^j}-\roundup{\lan c,v^i\ran}s_{v^i}$.
\end{remark}

\begin{lemma}\label{problematic lemma}
If $d=v^j-v^i$ is an oriented edge of $\kP$, and if $\kc\in M$ such that 
$\braket{\kc,d}=0$, then $\kLp(c,d)=0$.
\end{lemma}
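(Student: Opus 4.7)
The plan is a direct case analysis based on the definition of $\kLp(c, d^{ij})$, using the equations that cut out $\kMT(\kP)$ inside $\kMV(\kP)\oplus \RR^{\kPm}$.

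First I unpack what the hypothesis $\braket{c,d}=0$ gives us. Since $d = v^j - v^i$, we have $\braket{c, v^i} = \braket{c, v^j}$; call this common real number $\beta$. In particular the $t_{ij}$-coefficient $\braket{c, d^{ij}}$ in $\kLp(c, d^{ij})$ is zero, so the first summand drops out. Setting $\alpha := \{\beta\} = \{\braket{c, v^i}\} = \{\braket{c, v^j}\}$, what remains is
\[
\kLp(c, d^{ij}) = \alpha\, (s_{v^j} - s_{v^i}) \;\in\; \cT^*(\kP).
\]

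Now I split into two cases. If $\alpha = 0$, then by definition of $\{\cdot\}$ we have $\beta \in \ZZ$, and the expression is manifestly zero. If $\alpha \neq 0$, then $\beta \notin \ZZ$. I then observe that every point $v$ on the segment $[v^i, v^j]$ satisfies $\braket{c, v} = \beta$: parameterize $v = (1-\tau)v^i + \tau v^j$ and use $\braket{c, d}=0$ to get $\braket{c, v} = \braket{c, v^i} + \tau\braket{c, d} = \beta$. Since $\beta \notin \ZZ$ but any lattice point would pair with $c \in M$ to an integer, the edge contains no lattice points, i.e.\ $[v^i, v^j] \cap \kQuotD = \emptyset$. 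In particular, neither $v^i$ nor $v^j$ lies in $\kQuotD$, so neither $s_{v^i}$ nor $s_{v^j}$ is forced to be zero by Definition~\ref{def-TP}. However, the second defining relation of $\kMT(\kP)$ precisely states that $s_i = s_j$ whenever $[v^i, v^j]$ is a compact edge with $[v^i, v^j]\cap \kQuotD = \emptyset$. This relation means $s_{v^i} - s_{v^j}$ vanishes identically on $\kMT(\kP)$, hence it is zero as an element of $\kMT(\kP)^* = \cT^*(\kP)$.

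Therefore $\alpha(s_{v^j} - s_{v^i}) = 0$ in $\cT^*(\kP)$, and combining the two cases gives $\kLp(c, d^{ij}) = 0$, as claimed. I do not expect a genuine obstacle here: the statement is essentially the observation that the edge-equations of $\kMT(\kP)$ were built exactly to handle the case when $c$ is constant along an edge whose endpoints are non-lattice. The only small subtlety is to remember to verify that $\braket{c, v}$ is constant along the \emph{entire} segment (not just at its endpoints) in order to conclude that the edge has no lattice point at all, which is what allows us to invoke the $s_i = s_j$ relation.
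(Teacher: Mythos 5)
Your proof is correct and is essentially the paper's argument: the $t_{ij}$-coefficient vanishes by hypothesis, and the remaining term $\{\beta\}(s_{v^j}-s_{v^i})$ dies either because $\beta\in\ZZ$ or because the edge contains no lattice point and hence $s_{v^i}=s_{v^j}$ in $\kMTD(\kP)$. The only difference is cosmetic — the paper organizes the case split by $g_d=1$ versus $g_d\geq 2$ rather than by whether $\{\beta\}$ vanishes, but both splits reduce to the same two facts.
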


\begin{proof}
If $g_d=1$, then $\lan c,v^j\ran=\lan c,v^i\ran=\lan c,w\ran\in \ZZ$, where $w$ is a lattice point lying on the line passing through $v^i$ and $v^j$ ( $w$ exists, since $g_d=1$). 
If $g_d\geq 2$, then $s_i=s_j$ by definition. Together with
$\lan c,v^j\ran=\lan c,v^i\ran$ this shows the claim. 
\end{proof}

Each path along compact edges determines an element of
$\Z^\kPr$, whose entries count  how often and from which direction we passed through an edge ($\kPr$ is the total number of compact edges).
While this element does not suffice to recover the original path completely, 
we will, nevertheless, call it a path, too.

\begin{definition}\label{def path}
  For $a, \kc\in \tail(\kP)^\vee$ we define (as in \cite{alt}) the following paths on the $1$-skeleton of $\kP$:
\begin{eqnarray*}
  \un{\lam}(a)&:=& [\text{some path } v_*\leadsto v(a)] = [\lam_1(a),\dots,\lam_r(a)]\in\Z^\kPr,\text{~~and}\\
  \un{\mu}^\kc(a)&:=&[\text{some path } v(a)\leadsto v(\kc)\text{ such that }\mu^c_{i}(a)\braket{\kc, d^{i}}\leq 0~\forall~d^{i}] = [\mu^c_1(a),\dots,\mu^c_r(a)]\in\Z^\kPr.
\end{eqnarray*}
Moreover, we define the path $\un{\lam}^c(a):=\un{\lam}(a)+\un{\mu}^c(a)$, 
which is a special path from $v_*$ to $v(c)$ that depends on $a$.
\end{definition}

\begin{remark}
Note that $\lam_i(a)$ or $\mu_i^\kc(a)$ are not uniquely defined. If $d_1,...,d_k$ are oriented edges going from $v_*$ to $v(a)$, i.e.\ it holds that $v(a)=v_*+\sum_{i=1}^kd_i$, then we can choose $\lam_i(a)=1$ for $i=1,...,k$ and $\lam_i(a)=0$ for other $i$. 
\end{remark}

The following lemma is crucial in connecting the generic generators $\kteta(\kc_1,\kc_2)$ of $\ktT$ from Defintion~\ref{def:upperSemigroups},
with the specific elements $\kLp(c,d)$, which will provide an explicit finite set of generators of $\ktT$ (cf. Proposition~\ref{pro 2dim fin gen}).
\begin{lemma}\label{key lemma}
Let $c_1,c_2\in \tail(\kP)^\vee\cap M$ and let $c=c_1+c_2$. For $j=1,2$ we write $\un{\mu}^j(c):=\un{\mu}^{c_j}(c)$ and $\un{\lam}^j(c):=\un{\lam}^{c_j}(c)$. It holds that
\[
\textstyle\ktetaZ(c_1,c_2)= \sum_{j=1}^2 \{\keta(c_j)\}\cdot s_{v(c_j)}
-\{\keta(c)\}\cdot s_{v(c)}
-\sum_{j,\nu} \mu_\nu^j(c)\,\lan c_j,d^\nu\ran\, t_\nu.
\]
\end{lemma}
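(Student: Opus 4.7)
The plan is to unfold everything using Definition~\ref{def-etsTildeZ} and the path-independence of $\wteta$ from Definition~\ref{d:etatilde(c)}, then exploit the specific choice of path $\un{\lam}^j(c)$ so that the bulk of the contributions cancels, leaving only the $\un{\mu}^j(c)$-part.

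First, from $\ktetaZ(\cdot) = \wteta(\cdot) + \{\keta(\cdot)\}\, s_{v(\cdot)}$ together with $\ktetaZ(c_1,c_2) = \ktetaZ(c_1)+\ktetaZ(c_2)-\ktetaZ(c)$, the $s_{v(c_j)}$- and $-s_{v(c)}$-terms split off immediately and produce exactly the first two summands of the target formula. Thus it suffices to show
\[
\wteta(c_1)+\wteta(c_2)-\wteta(c) \;=\; -\sum_{j,\nu}\mu^j_\nu(c)\,\lan c_j,d^\nu\ran\, t_\nu.
\]

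For this I would compute $\wteta(c_j)$ using the path $\un{\lam}^j(c)=\un{\lam}(c)+\un{\mu}^j(c)$ from $\vast$ to $v(c_j)$ (which by construction passes through $v(c)$); this is permitted since $\wteta$ only depends on the endpoint. Encoded against the fixed edge orientations $d^\nu$, Definition~\ref{d:etatilde(c)} becomes
\[
\wteta(c_j) \;=\; -\lan \vast,c_j\ran\, s_{\vast} \;-\; \sum_\nu\bigl(\lam_\nu(c)+\mu^j_\nu(c)\bigr)\lan c_j,d^\nu\ran\,t_\nu,
\]
while analogously $\wteta(c) = -\lan \vast,c\ran\, s_{\vast} - \sum_\nu \lam_\nu(c)\,\lan c,d^\nu\ran\,t_\nu$ using just $\un{\lam}(c)$.

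Now summing: the $s_{\vast}$-coefficients cancel by linearity of $c\mapsto\lan \vast,c\ran$, and the $\lam_\nu(c)$-contributions cancel via $\lan c_1,d^\nu\ran+\lan c_2,d^\nu\ran=\lan c,d^\nu\ran$. The only surviving terms are the two $\un{\mu}^j(c)$-sums, yielding precisely the right-hand side. The argument is essentially bookkeeping; the only conceptual point — and therefore the main thing to get right — is the observation that sharing the common prefix $\un{\lam}(c)$ between $\wteta(c_1)$ and $\wteta(c_2)$ is exactly what forces the $\lam_\nu(c)$-part to cancel against $\wteta(c)$, isolating the contribution of the two corrective tails $\un{\mu}^j(c)$.
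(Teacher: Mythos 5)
Your proof is correct and follows essentially the same route as the paper: expand $\ktetaZ(c_1,c_2)=\ktetaZ(c_1)+\ktetaZ(c_2)-\ktetaZ(c)$ via Definition~\ref{def-etsTildeZ}, evaluate $\wteta(c_j)$ along the path $\un{\lam}^j(c)=\un{\lam}(c)+\un{\mu}^j(c)$ and $\wteta(c)$ along $\un{\lam}(c)$, and cancel the shared $\un{\lam}(c)$-prefix using $\lan c_1,d^\nu\ran+\lan c_2,d^\nu\ran=\lan c,d^\nu\ran$. The paper's proof is exactly this computation, written in one displayed chain of equalities.
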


\begin{proof}
We pick the path $\un{\lam}^j(c)$ from $v_*$ to $v(c_j)$ and compute
\begin{eqnarray*}
\ktetaZ(c_1,c_2)&=&\textstyle\phantom{-}\sum_{j=1}^2\big(\ketaZ(c_j)-\keta(c_j)\big)\cdot s_{v(c_j)}-\big(\ketaZ(c)-\keta(c)\big)\cdot s_{v(c)}-\\
&&\textstyle-\sum_\nu\Big(\sum_j\lam^j_\nu(c)\lan c_j,d^\nu\ran-\lam_\nu(c)\lan c,d^\nu \ran\Big)t_\nu\\
&=&\textstyle\phantom{-}\sum_{j=1}^2\Big(\big(\ketaZ(c_j)-\keta(c_j)\big)\cdot s_{v(c_j)}-\sum_\nu\mu_\nu^j(c)\lan c_j,d^\nu\ran t_\nu\Big)-\big(\ketaZ(c)-\keta(c)\big)\cdot s_{v(c)}.
\end{eqnarray*}
\end{proof}

\begin{lemma}\label{lem vj}
For each vertex $v^j$ and $m\in M$ there exist $m_j\in \tail(\kP)^\vee\cap M$ such that 
$v(m+m_j)=v^j$ and $\lan v,m_j\ran\in \ZZ\,$ for all vertices $v\in P$, even for those not in $N$. 
\end{lemma}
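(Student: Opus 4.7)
The plan is to build $m_j$ as a large positive multiple of an auxiliary element $m_0\in M$ chosen in the interior of the normal cone of $v^j$, refined so that $m_0$ is integral on every vertex. The parameter ``large'' will absorb the given $m$.

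First I introduce the sublattice
\[
M_0\;:=\;\{c\in M\kst \langle v^i,c\rangle\in\ZZ \text{ for every } v^i\in\Vrtx(\kP)\}.
\]
Since $\kP$ has only finitely many vertices, all with rational coordinates, there is a positive integer $g$ annihilating all denominators, so $gM\subseteq M_0$; hence $M_0$ is a finite-index sublattice of $M$. Any $c\in M_0$ automatically satisfies the integrality requirement $\langle v,c\rangle\in\ZZ$ for all (including non-lattice) vertices.

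Next, let $C_j\subseteq\tail(\kP)^\vee$ be the maximal cone of the inner normal fan of $\kP$ at $v^j$, i.e.\ $C_j=\{c\in\tail(\kP)^\vee\kst \langle v^j,c\rangle=\min_{v\in \kP}\langle v,c\rangle\}$. Under the standing assumption that $\kP$ has a pointed tail cone, $C_j$ is a full-dimensional cone and its topological interior is contained in the interior of $\tail(\kP)^\vee$. Since $M_0$ is a full-rank lattice, I can choose $m_0\in\inter(C_j)\cap M_0$: pick any rational point in $\inter(C_j)$, clear its denominator to land in $M$, then multiply by $g$ to land in $M_0$; the result still lies in the open cone $\inter(C_j)$.

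Finally, set $m_j:=k\,m_0$ for a positive integer $k$ to be specified. Then $m_j\in\tail(\kP)^\vee\cap M$, and $\langle v,m_j\rangle=k\langle v,m_0\rangle\in\ZZ$ for every vertex $v$, so the integrality requirement holds. To arrange $v(m+m_j)=v^j$, note that because $m_0\in\inter(C_j)$, the inequalities $\langle v^j,m_0\rangle<\langle v^i,m_0\rangle$ are \emph{strict} for every vertex $v^i\neq v^j$. Therefore
\[
\langle v^j-v^i,\,m+k\,m_0\rangle \;=\; \langle v^j-v^i,\,m\rangle + k\,\langle v^j-v^i,\,m_0\rangle
\]
is negative for all $i\neq j$ once $k$ is large enough, and similarly $m+k\,m_0\in\inter(\tail(\kP)^\vee)$ for $k$ large. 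Thus $v^j$ is the unique minimizer of $\langle\cdot,\,m+m_j\rangle$ on $\kP$, so the chosen representative $v(m+m_j)$ must be $v^j$. The only nontrivial step is producing $m_0\in\inter(C_j)\cap M_0$, which is routine once one has observed that $M_0$ has finite index in $M$; everything else is a linear-programming argument with the parameter $k$.
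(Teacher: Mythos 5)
Your proof is correct and follows essentially the same route as the paper: pick an element of $M$ strictly minimized at $v^j$ (equivalently, in the interior of the normal cone of $v^j$) and multiply it by a sufficiently large integer so as to absorb $m$ and clear the denominators of the vertices. Your version merely makes explicit the finite-index sublattice argument that the paper compresses into ``since the vertices of $P$ have rational coordinates, such an $m_j$ exists.''
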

\begin{proof}
Let us take $\widetilde{m}_j\in \tail(\kP)^\vee\cap M$ such that 
$\lan v_j,\widetilde{m}_j\ran<\lan v,\widetilde{m}_j\ran$ 
for all other vertices $v\ne v_j$. Then we take $m_j:=k\widetilde{m}_j$ such that $k$ is big enough that $v(m+m_j)=v^j$ and that additionally 
$\lan v,m_j\ran\in \ZZ$ for all vertices $v\in P$. 
Since the vertices of $P$ have rational coordinates, such an $m_j$ exists.
\end{proof}

\begin{lemma}\label{lem pom lem kon}
It holds that $\kLp(c,d^{ij})\in \ktT$.
\end{lemma}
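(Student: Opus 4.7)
The plan is to exhibit, for any $c\in M$ and any compact edge $d^{ij}$ of $P$ with $\lan c,d^{ij}\ran\ge 0$, a pair $c_1,c_2\in\tail(P)^\vee\cap M$ such that the generator $\ktetaZ(c_1,c_2)$ of $\ktT$ coincides with $\kLp(c,d^{ij})$, which immediately yields the lemma. The case $\lan c,d^{ij}\ran<0$ then follows from the convention $\kLp(c,-d^{ij})=\kLp(c,d^{ij})$ of Definition~\ref{def 53}.

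First I will produce two auxiliary elements. Let $\tilde n_i\in\tail(P)^\vee\cap M$ lie in the interior of the normal cone of $v^i$, so that $\tilde n_i$ minimises strictly at $v^i$, and let $\tilde n_j\in\tail(P)^\vee\cap M$ lie in the relative interior of the normal cone of the compact edge $[v^i,v^j]$, so that $\tilde n_j$ is constant on $[v^i,v^j]$ and strictly larger at every other vertex. After scaling each by a sufficiently divisible positive integer I can arrange $\lan v,\tilde n_i\ran,\lan v,\tilde n_j\ran\in\ZZ$ for every $v\in\Vrtx(P)$, using rationality of the vertices. Since $\tilde n_j$ achieves its minimum on a bounded face, it in fact lies in the interior of $\tail(P)^\vee$.

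For integers $K,K'>0$ chosen sufficiently large, I set $c_1:=K\tilde n_i$ and $c_2:=-c+K'\tilde n_j$; both lie in $\tail(P)^\vee\cap M$. By construction $v(c_1)=v^i$. The relation $\lan c_2,d^{ij}\ran=-\lan c,d^{ij}\ran+K'\lan\tilde n_j,d^{ij}\ran=-\lan c,d^{ij}\ran\le 0$, together with $K'$ large, shows that $c_2$ minimises on $[v^i,v^j]$, so after tie-breaking $v(c_2):=v^j$. Taking $K$ large relative to $\lan c,d^{ij}\ran$ forces $v(c_1+c_2)=v^i$. Because $K\lan v,\tilde n_i\ran$ and $K'\lan v,\tilde n_j\ran$ are integer at every vertex, the fractional parts collapse to
\[
\{\keta(c_1)\}=0,\qquad\{\keta(c_2)\}=\{\lan v^j,c\ran\},\qquad\{\keta(c_1+c_2)\}=\{\lan v^i,c\ran\}.
\]

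I then apply Lemma~\ref{key lemma} with the trivial path $\un{\mu}^1(c_1+c_2)=0$ from $v^i$ to itself and the single-edge path $\un{\mu}^2(c_1+c_2)=d^{ij}$ from $v^i$ to $v^j$; the sign condition $\mu^2_{ij}\lan c_2,d^{ij}\ran\le 0$ holds. Only the $t_{ij}$-coefficient is non-zero, with value $-\lan c_2,d^{ij}\ran=\lan c,d^{ij}\ran$, and substituting yields
\[
\ktetaZ(c_1,c_2)=\{\lan v^j,c\ran\}s_{v^j}-\{\lan v^i,c\ran\}s_{v^i}+\lan c,d^{ij}\ran t_{ij}=\kLp(c,d^{ij}).
\]
I expect the main technical obstacle to be the simultaneous realisation of the required properties of $\tilde n_j$ — namely minimising exactly on $[v^i,v^j]$, lying in the interior of $\tail(P)^\vee$ so that $c_2$ stays bounded below on $P$, and satisfying integer pairings even at non-lattice vertices — but this is routine polyhedral bookkeeping.
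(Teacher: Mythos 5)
Your proposal is correct and follows essentially the same route as the paper's own proof: the paper likewise builds (via the method of Lemma~\ref{lem vj}) an element $\widetilde{c}_j$ constant on the edge with integral values at the vertices and an element $\widetilde{c}_i$ minimising at $v^i$, forms the pair $(-c+\widetilde{c}_j,\ \widetilde{c}_i)$ — your $(c_2,c_1)$ — and evaluates $\ktetaZ(\cdot,\cdot)$ with Lemma~\ref{key lemma} along the single-edge path to recover $\kLp(c,d^{ij})$. The only differences are cosmetic (explicit scaling constants $K,K'$ and the remark that $\tilde n_j$ lies in the interior of $\tail(\kP)^\vee$).
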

\begin{proof}
Without loss of generality we assume that $\lan c,d^{ij}\ran\geq 0$. 
With the same argument as in Lemma~\ref{lem vj} there exists an element
$\widetilde{c}_{j}\in \tail(\kP)^\vee\cap M$
which is  perpendicular to the edge $d^{ij}$,
and such that   $\widetilde{c}_{j}-c\in \tail(\kP)^\vee\cap M$ with
$v(\widetilde{c}_{j})=v(\widetilde{c}_{j}-c)=v^j$
and $\lan \widetilde{c}_{j},v^i\ran=\lan \widetilde{c}_{j},v^j\ran\in \ZZ$.
Note that, by assumption we have $\lan -c,v^j\ran\leq \lan -c,v^i\ran$.
We fix such a $\widetilde{c}_{j}$. 
Again by Lemma \ref{lem vj} there exists also $\widetilde{c}_i\in \tail(\kP)^\vee\cap M$
such that $v(\widetilde{c}_{i})=v(\widetilde{c}_i+\widetilde{c}_{j}-c)=v^i$ and $\lan \widetilde{c}_{i},v^i\ran,$ $\lan \widetilde{c}_{i},v^j\ran\in \ZZ$.
Thus 
\begin{equation}\label{eq kteta}
\{\keta(-c+\widetilde{c}_j)\}=\{\lan c,v^j\ran\},\quad\{\keta(\widetilde{c}_i)\}=0,\quad\text{and}\quad\{\keta(-c+\widetilde{c}_i+\widetilde{c}_{j}\}=\{\lan c,v^i\ran\}.
\end{equation} 
 Lemma \ref{key lemma} gives us 
\[
  \ktetaZ(-c+\widetilde{c}_j,\widetilde{c}_i)=\{\keta(-c+\widetilde{c}_j)\}\cdot s_{v^j}+\{\keta(\widetilde{c}_i)\}\cdot s_{v^i}
  -\{\keta(-c+\widetilde{c}_i+\widetilde{c}_j)\}\cdot s_{v^i}- \,\braket{ -c+\widetilde{c}_j,d^{ij}}\, t_{ij}.
\]
Since $\lan d^{ij},\widetilde{c}_j\ran=0$, it follows by \eqref{eq kteta} 
that $\kLp(c,d^{ij})=\ktetaZ(-c+\widetilde{c}_j,\widetilde{c}_i)\in\ktT$.
\end{proof}

\begin{example}\label{ex dij}
Let $\kP=[v^1,v^2] = [-\frac{a_1}{b_1},\frac{a_2}{b_2}]\subset \RR$ with $a_i,b_i>0$ and such that 
$\frac{a_1}{b_1},\frac{a_2}{b_2}\in\QQ\setminus \ZZ$. 
We denote by $m=\operatorname{lcm}(b_1,b_2)$
and set $\vast:=v^1$. So we have $\kQuot=\ZZ$.
For $k\in \ZZ_{>0}$ we have by definition
\begin{equation}\label{eq ktetak}
\ktetaZ(k)=\Big\lceil\frac{a_1k}{b_1}\Big\rceil s_1
	\hspace{1em}\mbox{and}\hspace{1em}
\ktetaZ(-k)=\frac{-a_1k}{b_1}s_1+k\Big(\frac{a_1}{b_1}
	+\frac{a_2}{b_2}\Big)t+\Big\{\frac{a_2k}{b_2}\Big\}s_2.
\end{equation}
Thus, denoting by $\ell(\kP) = v^2-v^1$ the length of $\kP$, we compute for $k\in \{1,\dots,m\}$ that
\begin{eqnarray}
  \label{eq kteta new}
  \ktetaZ(-k,m)&=&k\cdot\ell(\kP)t_{12}+\Big\{\frac{ka_2}{b_2}\Big\}s_2-\Big\{\frac{-ka_1}{b_1}\Big\}s_1,\\
  \nonumber
  \ktetaZ(-m,k)&=&k\cdot\ell(\kP)t_{12}+\Big\{\frac{ka_1}{b_1}\Big\}s_1-\Big\{\frac{-ka_2}{b_2}\Big\}s_2,
\end{eqnarray}
and thus we see that
\begin{equation}\label{eq third t}
\kLp(k,v^2-v^1)=\left\{
\begin{array}{ll}
\ktetaZ(-k,m)& \text{ if }k\in \{1,2,\dots,m\}\\
\ktetaZ(-m,-k)& \text{ if }k\in \{-1,-2,\dots,-m\}.
\end{array}
\right.
\end{equation}
\end{example}

The next lemma follows immediately from the definitions.

\begin{lemma}\label{lem connec t path}
  Let $v^0,v^1\dots,v^k$ be a path along the compact edges of $P$ from vertex $v^0$ to vertex $v^k$.
  For $\kc\in  M$ let 
\[
  \delta_i(c):=\left\{
\begin{array}{ll}
-1,&\text{ if } \lan  c,v^i-v^{i-1}\ran\leq 0,\\
\phantom{-}1,&\text{ if } \lan  c,v^i-v^{i-1}\ran> 0.
\end{array}
\right.
\]
We then have
\[
  \textstyle
  \sum_{i=1}^k\delta_i(c)\,\kLp(c,v^i-v^{i-1})=
	-\{\lan c,v^0\ran\}s_{v^0}+\sum_{i=1}^k\lan c,v^i-v^{i-1}\ran 
	t_{i-1,i}+\{\lan c,v^k\ran\}s_{v^k}.
\]
\end{lemma}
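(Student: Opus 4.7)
The plan is to prove the identity term by term. For each fixed $i\in\{1,\dots,k\}$ I would show that
\[
\delta_i(c)\,\kLp(c,v^i-v^{i-1}) \;=\; \langle c,v^i-v^{i-1}\rangle\, t_{i-1,i} + \{\langle c,v^i\rangle\}\,s_{v^i} - \{\langle c,v^{i-1}\rangle\}\,s_{v^{i-1}},
\]
after which the claim is immediate by summing over $i$: the $t_{i-1,i}$-coefficients reassemble, and the $s$-terms telescope so that only the boundary contributions $-\{\langle c,v^0\rangle\}s_{v^0}+\{\langle c,v^k\rangle\}s_{v^k}$ survive.

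The key observation is that the above formula does not depend on the orientation chosen for the edge. I would verify this by splitting into two cases. If $\langle c, v^i-v^{i-1}\rangle>0$, then $\delta_i(c)=1$ and Definition~\ref{def 53} applies directly with the roles $v^j=v^i,\ v^i=v^{i-1}$, yielding precisely the right-hand side. If $\langle c,v^i-v^{i-1}\rangle\leq 0$, then $\delta_i(c)=-1$; I would apply Definition~\ref{def 53} to the reversed edge $v^{i-1}-v^i$ (which now pairs non-negatively with $c$) and use the convention $\kLp(c,-d)=\kLp(c,d)$. The resulting expression, once multiplied by $\delta_i(c)=-1$ and after noting $t_{i,i-1}=t_{i-1,i}$, coincides exactly with the formula above.

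The only subtlety is the boundary case $\langle c,v^i-v^{i-1}\rangle=0$, where the $t_{i-1,i}$-coefficient is zero and one must confirm that the two $s$-terms cancel. Here I would invoke Lemma~\ref{problematic lemma}: either $g_d=1$, in which case $\langle c,v^i\rangle$ and $\langle c,v^{i-1}\rangle$ are both integers so the $\{\cdot\}$-values vanish, or $g_d\geq 2$, in which case Definition~\ref{def-TP} forces $s_{v^i}=s_{v^{i-1}}$ inside $\kMT(\kP)^*$ while $\langle c,v^i\rangle=\langle c,v^{i-1}\rangle$ ensures the $\{\cdot\}$-values match. Either way the contribution is zero, consistent with $\kLp(c,v^i-v^{i-1})=0$ from Lemma~\ref{problematic lemma}.

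I do not expect any serious obstacle; this is essentially a bookkeeping exercise reducing to the definition, the symmetry $\kLp(c,-d)=\kLp(c,d)$, and a telescoping cancellation. The mildly delicate point is keeping track of signs in the reversed-edge case, which is why I would treat both signs of $\langle c,v^i-v^{i-1}\rangle$ explicitly rather than trying to unify them via a slick manipulation.
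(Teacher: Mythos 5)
Your proof is correct and fills in exactly the computation the paper leaves implicit (the paper offers no argument beyond stating that the lemma ``follows immediately from the definitions''). The term-by-term orientation check via $\kLp(c,-d)=\kLp(c,d)$, the appeal to Lemma~\ref{problematic lemma} for the degenerate case $\langle c, v^i-v^{i-1}\rangle=0$, and the telescoping of the $s$-terms constitute precisely the intended bookkeeping.
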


\begin{proposition}\label{prop generators t}
The following set generates $\ktT$:
\[
  \{\kLp(c,d^{ij})\kst d^{ij}\in \cedges(P),~c\in \tail(\kP)^\vee\cap M\}
  \cup
  \{s_v \kst v\in \Vrtx(P)\}.
\]
\end{proposition}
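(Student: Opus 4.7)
The proof splits naturally into two directions. For the easier inclusion $\supseteq$, Lemma~\ref{lem pom lem kon} already gives $\kLp(c,d^{ij})\in\ktT$, and $s_v=0\in\ktT$ for $v\in N$, so only $v\notin N$ needs attention. I would produce such an $s_v$ as $\ktetaZ(c_1,c_2)$ by choosing $c_1,c_2\in\tail(P)^\vee\cap M$ deep enough in the inner normal cone at $v$ to force $v(c_1)=v(c_2)=v(c_1+c_2)=v$ (so the paths $\un{\mu}^j$ are trivial and there are no $t_\nu$-contributions), and with the fractional parts of $\eta(c_1),\eta(c_2)$ arranged so that $\{\eta(c_1)\}+\{\eta(c_2)\}\ge 1$; Lemma~\ref{lem trivial} then forces $\ktetaZ(c_1,c_2)=s_v$.

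For the main inclusion $\subseteq$, it is enough to express each generator $\ktetaZ(c_1,c_2)$ of $\ktT$ as an $\NN$-linear combination of the proposed elements. Starting from Lemma~\ref{key lemma},
\[
\ktetaZ(c_1,c_2)=\sum_{j=1}^2\{\eta(c_j)\}\,s_{v(c_j)}-\{\eta(c)\}\,s_{v(c)}-\sum_{j,\nu}\mu_\nu^j(c)\lan c_j,d^\nu\ran\,t_\nu,
\]
I apply Lemma~\ref{lem connec t path} to each path $\un{\mu}^j(c)\colon v(c)\leadsto v(c_j)$ with the form $c_j$ in order to replace the $t_\nu$ by combinations of $\kLp(c_j,d^\nu)$. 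The defining property $\mu_\nu^j(c)\lan c_j,d^\nu\ran\le 0$ is exactly what guarantees that these $\kLp$-coefficients are non-negative integers. Collecting terms, one arrives at
\[
\ktetaZ(c_1,c_2)=\sum_{j,\nu}A_\nu^j\,\kLp(c_j,d^\nu)+\alpha_1\,s_{v(c_1)}+\alpha_2\,s_{v(c_2)}+\alpha\,s_{v(c)}
\]
with $A_\nu^j\in\NN$, and a case analysis via Lemma~\ref{lem trivial} shows $\alpha_1,\alpha_2\in\{0,1\}$ while $\alpha\in\{0,-1,-2\}$.

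When $\alpha<0$ and $v(c)\notin N$, the remaining obstruction is resolved using the elementary identity
\[
s_{v^j}-s_{v^i}=\kLp(c',d^{ij})-\kLp(-c',d^{ij}),
\]
which follows directly from Remark~\ref{rem minus} whenever $\lan c',v^i\ran,\lan c',v^j\ran\notin\ZZ$. Taking $v^i=v(c)$ and $v^j\in\{v(c_1),v(c_2)\}$ (possibly iteratively along a sequence of edges of $P$), the $-\kLp(-c',d^{ij})$-term produced either folds into the already-positive $\kLp$-sum, or can be introduced at zero net cost by pairing with a matching $\kLp(c',d^{ij})$, thereby trading the negative $s_{v(c)}$-coefficient for further non-negative $\kLp$- and $s_{v(c_k)}$-contributions. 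In the one-dimensional case of Example~\ref{ex dij}, this exchange is exactly the identification $\ktetaZ(-k,m)=\kLp(k,v^2-v^1)$ from~\eqref{eq third t}.

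The main technical obstacle will be this absorption step: verifying that the Remark~\ref{rem minus} substitutions -- used once when $\alpha=-1$ and twice when $\alpha=-2$ -- can always be arranged globally so as never to create new negative $s_v$-coefficients at vertices away from $v(c)$. I expect the cleanest way to organize the bookkeeping is by induction on the combined length of the paths $\un{\mu}^1(c)$ and $\un{\mu}^2(c)$, reducing at each step to the one-dimensional model of Example~\ref{ex dij}, where $\kLp$ and $\ktetaZ$ already coincide up to $s_v$-shifts.
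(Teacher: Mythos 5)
Your overall strategy (combine Lemma~\ref{key lemma} with Lemma~\ref{lem connec t path} to convert the $t_\nu$-terms into $\kLp$-terms, and control the fractional residues via Lemma~\ref{lem trivial}) is the same as the paper's, and your treatment of the inclusion $\supseteq$ is fine. The gap is in the sign choice for the substitution and the resulting ``absorption step'', which you correctly identify as the main obstacle but do not carry out. Because you evaluate the $\kLp$'s at $+c_j$ along the path $\un{\mu}^j(c)$ from $v(c)$ to $v(c_j)$, the boundary contribution at $v(c_j)$ is $\{\lan c_j,v(c_j)\ran\}=\{-\keta(c_j)\}$ rather than $\{\keta(c_j)\}$, and the leftover coefficient of $s_{v(c)}$ becomes $-\{\keta(c)\}-\sum_j\{\lan c_j,v(c)\ran\}$, which can indeed be $-1$ or $-2$. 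Your plan for removing this negative term is not a proof: the identity $s_{v^j}-s_{v^i}=\kLp(c',d^{ij})-\kLp(-c',d^{ij})$ reintroduces a generator with coefficient $-1$, and the claim that it ``folds into the already-positive sum or can be introduced at zero net cost by pairing with a matching term'' is unjustified --- in general no matching $\kLp(c',d^{ij})$ is present in the sum, and adding the pair $\kLp(c',d^{ij})-\kLp(-c',d^{ij})$ ``at zero net cost'' is circular, since that pair \emph{is} the quantity $s_{v^j}-s_{v^i}$ you are trying to realize. The proposed induction ``reducing to the one-dimensional model'' is never set up.

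The paper sidesteps all of this by a single sign flip: it substitutes $\kLp(-c_j,\mu^j_\nu(c)d^\nu)$ instead of $\kLp(c_j,\mu^j_\nu(c)d^\nu)$. Along $\un{\mu}^j(c)$ the form $-c_j$ is non-decreasing, so Lemma~\ref{lem connec t path} applied to $-c_j$ yields an $\NN$-combination whose boundary term at $v(c_j)$ is $\{\lan -c_j,v(c_j)\ran\}s_{v(c_j)}=\{\keta(c_j)\}s_{v(c_j)}$, exactly cancelling the corresponding term of Lemma~\ref{key lemma}; the entire residue is then $\big(\sum_j\{\lan -c_j,v(c)\ran\}-\{\keta(c)\}\big)s_{v(c)}=n\,s_{v(c)}$ with $n\in\{0,1\}$ by Lemma~\ref{lem trivial}, so no negative coefficient ever appears. (This also yields $s_v\in\ktT$: take $c_1=c_2$ with $v(c_1)=v$ and $n=1$.) If you want to salvage your version, the correct ``absorption'' is precisely Remark~\ref{rem minus}: $\kLp(c_j,d)=\kLp(-c_j,d)+s_w-s_v$ (suitably modified at lattice vertices), and summing these corrections along $\un{\mu}^j(c)$ telescopes to exactly the missing $s$-terms; your decomposition and the paper's differ by this telescoping sum, and the paper's is the one that is manifestly an $\NN$-combination.
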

\begin{proof}
Let $c_1,c_2\in \tail(\kP)^\vee\cap M$ and let $c=c_1+c_2$.
As in Lemma \ref{lem connec t path} we compute that 
\[
  \textstyle
  \sum_{\nu=1}^\kPr\kLp(-c_j,\mu_\nu^j(c)d^\nu)=-\{\lan -c_j,v(c)\ran\}s_{v(c)}+\{\lan -c_j,v(c_j)\ran\}s_{v(c_j)}-\sum_{\nu=1}^\kPr\mu_\nu^j(c)\lan d^\nu,c_j\ran t_\nu,
\]
where $\nu$ runs through all the edge indices $1,\dots,\kPr$.
Thus using Lemma \ref{key lemma} we see that 
\[
  \textstyle
  \ktetaZ(c_1,c_2)-\sum_{j=1}^2\sum_{\nu=1}^\kPr\kLp(-c_j,\mu^j_\nu(c)d^\nu)=-\{\eta(c)\}s_{v(c)}+\sum_{j=1}^2\{\lan -c_j,v(c)\ran\}s_{v(c)}.
\]
Since $\{\eta(\kc)\}=\{-\braket{\kc_1+\kc_2,v(\kc)}\}$ by definition,
we see by Lemma \ref{lem trivial} that 
\begin{equation}\label{eq finite g}
  \textstyle
\ktetaZ(c_1,c_2)=\sum_{j=1}^2\sum_{\nu=1}^\kPr\kLp(c_j,\mu^j_\nu(c)d^\nu)+ns_{v(c)},
\end{equation}
where $n$ is either $0$ or $1$, and both actually do appear.
From this description we can easily see that $s_v\in \ktT$ for $v\in \Vrtx(P)$: take $c_1=c_2$ with $v(c_1)=v$ and such that we get $n=1$ above.
Since  $\kLp(c,d^{ij})\in \ktT$ by Lemma \ref{lem pom lem kon}, the equation \eqref{eq finite g} concludes the proof.
\end{proof}

The proof of the  next proposition gives an explicit finite set of generators of $\ktT$.
We will use this for the proof of versality.
Finite generation was also proven in \cite[Proposition 7.7]{a}  with  different methods.

Let $d=w-v$ be an oriented edge  and let 
\[
k:=\min\{ |\lan c,d\ran| \,;\, c\in M, \lan c,d\ran\ne0\}.
\]
Let $c_1\in  M$ be such that $\lan c_1,d\ran=k$.
We define $m_1$ to  be the minimal natural number such that 
$m_1\lan c_1,v\ran,m_1\lan c_1,w\ran\in \ZZ$. 

\begin{proposition}\label{pro 2dim fin gen}
The set
\begin{equation}
  \label{eq:finiteEdgeGens}
  \big\{s_v, s_w\big\} \cup \big\{~\kLp(k_1\cdot c_1,d)\kst k_1= \pm 1, \dots, \pm m_1\big\} 
\end{equation}
generates $\span_\NN\{\kLp(c,d) \kst c\in M\}\subset \ktT$. 
\end{proposition}

\begin{proof}

We choose an arbitrary element $c\in \tail(\kP)^\vee\cap M$ 
and write $c=r_1c_1+c_2$ for some $r_1\in \ZZ$ and 
$c_2\in M$ such that $\lan c_2,d\ran=0$.
Let $\bar{r}_1\in \{1,\dots,m_1\}$ be such that $\bar{r}_1+n_1m_1=r_1$ for some $n_1\in \NN$. 
Without loss of generality  we assume that $\lan c,d\ran,\lan c_1,d\ran\geq 0$.
We obtain that
\begin{equation}\label{eq 2 dim is 1}
\kLp(c,d)-n_1\kLp(m_1c_1,d)-\kLp(\bar{r}_1c_1,d)=\Big(\{\lan c,w\ran\}-\{\lan \bar{r}_1c_1,w\ran\}\Big )s_w-\Big (\{\lan c,v\ran\}-\{\lan \bar{r}_1c_1,v\ran\}\Big )s_v.
\end{equation}
If $g_d=1$, then as in Lemma 
\ref{problematic lemma} we have
\[
  \lan c_2,v\ran=\lan c_2,w\ran=\lan c_2,n\ran\in \ZZ,
\]
where $n\in N$. Thus $\{\lan c,w\ran\}=\{\lan \bar{r}_1c_1,w\ran\}$ and the same for $v$, which proves the claim.\\
\noindent
If $g_d\geq 2$, then $s_v=s_w$  and thus the equation \eqref{eq 2 dim is 1} is equal to $ns_v=ns_w$ for some $n\in \NN$, from which the claim follows.
\end{proof}

\begin{corollary}
The monoid $\ktT$ is finitely generated.
\end{corollary}
\begin{example}\label{ex klaus ex}
Let us consider the one-dimensional polyhedron $\kP=[v,w]\subset\RR$, with $v=-\frac{1}{2}$ and $w=\frac{1}{2}$.
Embedding $\kP$ in height one in $\RR^2$ and dualizing produces the cone
\[
  \sigma\dual=\span_{\RR_{\geqslant0}}\{(-1,2),\;(1,2)\}\subseteq\RR^2.
\]
So the semigroup is
\(
\kS= \sigma\dual\cap\ZZ^2.
\)
The Hilbert basis, i.e.\ the set of minimal generators of $\kS$,
equals
\begin{equation}\label{eq hilb bas el ex}
\{(-2,1),\, (-1,1),\, (0,1),\, (1,1),\, (2,1)\}.
\end{equation}
Since $\kP$ is free from \se\ half open edges, we obtain
$\kMT(\kP)=\RR^3$ with coordinates $(t,s_1,s_2)$.
The oriented edge is  $d=w-v =1 $ and we claim that 
\begin{equation}\label{eq exam eq}
s_1,~~s_2~~,\kLp(1,d)=t+\frac{1}{2}s_2-\frac{1}{2}s_1,~~\kLp(-1,d)=t-\frac{1}{2}s_2+\frac{1}{2}s_1
\end{equation}
is the minimal generating system of $\ktT$.  
Besides the elements  given in \eqref{eq exam eq},
according to (\ref{eq:finiteEdgeGens}) from the proof of Proposition~\ref{pro 2dim fin gen},
we should also take $\kLp(2,d)$ and $\kLp(-2,d)$ as generators.
However, we have  that 
 \[
   \kLp(2,d)=\kLp(-2,d)=2t=\kLp(1,d)+\kLp(-1,d),
 \]
 which concludes our claim. So the generating set presented in~(\ref{eq:finiteEdgeGens}) is finite, but not necessarily minimal.
\end{example}

\begin{example}
  Let us revisit the polyhedron
  $\kP=\conv\big\{(-\frac{1}{6},\frac{1}{2}),\; (\frac{2}{3},\frac{1}{2})\big\}\subset \RR^2$
  from Example~\ref{ex-CQS}.\ref{item:CQS2} In this case $s:=s_1=s_2$ and thus we get the finitely generated semigroup
  \[
\ktT = \span_{\NN}\big\{    s,~~ \frac{5}{6}t+\frac{1}{6}s,~~ \frac{10}{6}t-\frac{4}{6}s\big\}.
  \]
\end{example}

So far we have two generating sets for the semigroup $\ktT$:
the original one $\{\ktetaZ(\kc_1,\kc_2)\}$ from Subsection~\ref{subSectMainMon},
 and the more recent one  $\{\kLp(c,d),\,s_v\}$ from Proposition~\ref{prop generators t}.
The latter are rather local gadgets; they just deal with one  compact edge
$d=w-v$. The following sub-monoid reflects this.
\begin{definition}\label{def submon}
For a compact edge $d=[v,w]$ of $P$ we define the sub-monoid $\ktT_d\subset \ktT$ as
\[
  \ktT_d:=\span_\NN\{s_v,s_w,\kLp(c,d) \kst c\in M\}.
\]
\end{definition}

We are going to discuss the degree of $\kLp(c,d)\in \ktT$ now. 
Assume, for the following that $\langle c,d\rangle\geq 0$, i.e.\ that 
$\langle c,w\rangle \geq \langle c,v\rangle$,
or even, because of Lemma~\ref{problematic lemma}, 
$\langle c,w\rangle > \langle c,v\rangle$.
While it is clear that the degree of $\kLp(c,d)$ equals
\[ 
\big\lceil\lan c,w\ran\big\rceil-\big\lceil\lan c,v\ran\big\rceil
\geq 0,
\]
we will provide a different characterization. For this, we will generalize the
notion of \se\ edges from Definition~\ref{def-shortEdge}
in Subsection~\ref{newAmbient}.

\begin{definition}
\label{def-kshort}
We call $d=[v,w)$ a {\em$k$-\se} (half open) edge if
\[
\;\#\{g_d\cdot [v,w)\cap\kQuotD\}< (k+1)\cdot g_d. 
\]
We call $d=[v,w]$ a {\em$k$-\se} edge if both half open edges $[v,w)$ and $[w,v)$ are {\em$k$-\se}.
In particular, $0$-\se ness means the old plain \se ness.
\end{definition}

\begin{remark}\label{rem klaus remark}
There is a quite subtle relationship between the notion of $k$-shortness
and the true lattice length $\ell:=\ell(d)\in\Q_{\geq 0}$ of an edge $d$.
We have the following implications: 
\[
\textstyle
\Big[\ell\leq (k+1)-\frac{1}{g_d}\Big]
\then
\mbox{ $d$ is $k$-short }
\then
\Big[\ell<k+1\Big].
\]
These two implications are not inverse to each other; the worst case appears
for $g=1$. There, the first expressions just means $[\leq k]$.
\label{rem-kshort}
\end{remark}

Recall the degree map $\pi$ from Definition \ref{def:upperSemigroups}.
\begin{proposition}
\label{prop-degtt-kshort}
Let $d$ be a $k$-\se\ compact edge of $\kP$ which is not
$(k-1)$-\se. Then, 
\[
\min\big\{\kpi\big(\kLp(c,d)\big)\kst \kc\in \kQuot
\mbox{\rm\ with } \langle c,w\rangle \ne \langle c,v\rangle \big\} = k.
\]
\end{proposition}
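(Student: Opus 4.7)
My plan is to reinterpret $\pi(L'(c,d))$ as a count of multiples of $g_d$ in a half-open interval along the line of the edge, and then to match this count to the two directions of the $k$-shortness hypothesis.

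First, unwinding Definition~\ref{def 53} and the projection $\pi$ from \eqref{def map p}: for $\langle c,d^{ij}\rangle\ge0$ the $s$-terms and the $t$-term combine (using that $\pi(s_v)=0=\{\langle c,v\rangle\}$ whenever $v\in N$) to give the clean formula
\[
\pi\bigl(L'(c,d^{ij})\bigr)=\lceil\langle c,v^j\rangle\rceil-\lceil\langle c,v^i\rangle\rceil.
\]
The case $\langle c,d^{ij}\rangle<0$ is handled by Remark~\ref{rem minus} via sign-flip, which alters the value by at most $\pm1$; for the minimum it therefore suffices to consider $c$'s with $\langle c,w\rangle>\langle c,v\rangle$.

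Next, I would parameterize the edge: let $e\in N$ be the primitive lattice vector along $d$ and fix a lattice point $n_0\in N$ on the affine line spanned by $g_d\cdot d$, which exists by the very definition of $g_d$. Writing $g_d v=n_0+se$ with $s\in[0,1)$, for any $c\in M$ with $m:=\langle c,e\rangle>0$ and $\alpha:=\langle c,n_0\rangle\in\ZZ$ a direct computation gives
\[
\pi\bigl(L'(c,d)\bigr)=\#\bigl\{j\in g_d\cdot\ZZ\;\bigm|\;\alpha+sm\le j<\alpha+sm+g_d\ell m\bigr\},
\]
where $\ell$ denotes the lattice length of $d$. Thus the degree is precisely the number of multiples of $g_d$ in a window of length $g_d\ell m$.

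The hypothesis that the half-open edge $[v,w)$ is $k$-short (resp.\ not $(k-1)$-short) then says exactly that the integer-count in $[s,s+g_d\ell)$ is $\le(k+1)g_d-1$ (resp.\ $\ge kg_d$). For the upper bound $\min\le k$: take $m=1$ and let $\alpha$ run over residues modulo $g_d$ (realisable by varying $c$ in its coset modulo $e^\perp\cap M$); the counts sum to $\lceil s+g_d\ell\rceil-\lceil s\rceil\le(k+1)g_d-1$, so some residue yields a count $\le k$, since a sliding-window argument forces consecutive residue counts to differ by at most $\pm1$ and hence all of them lie in $\{k,k+1\}$. For the lower bound $\min\ge k$ at $m=1$: the same sum is $\ge kg_d$, and the sliding-window bound $\max-\min\le1$ forces every residue to contribute $\ge k$. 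For $m\ge2$ the window length $g_d\ell m$ strictly exceeds $g_d\ell$, and a direct estimate using $\ell>k-1/g_d$ (itself a consequence of not-$(k-1)$-shortness) gives $\pi(L'(c,d))\ge k$ uniformly.

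The most delicate point is the sliding-window lemma---that the $g_d$ counts indexed by residues $\alpha\bmod g_d$ differ from one another by at most $1$---which requires careful bookkeeping at integer boundaries and at the degenerate cases $s=0$ or $g_d\ell\in\ZZ$. A secondary technical issue is that not every residue $\alpha\bmod g_d$ need be realisable by some $c\in M$ with prescribed $m=1$, since the image of $c\mapsto\langle c,n_0\rangle$ restricted to $\{c:\langle c,e\rangle=1\}$ may be a proper subgroup of $\ZZ$; in such low-dimensional or special-configuration cases one varies $m$ as well and falls back on the direct estimates for $m\ge2$ to recover the bound.
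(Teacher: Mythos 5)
Your reformulation of $\kpi(\kLp(c,d))=\lceil\langle c,w\rangle\rceil-\lceil\langle c,v\rangle\rceil$ as a count of multiples of $g_d$ in a half-open window, and the averaging-plus-sliding-window argument over residues for $\langle c,e\rangle=1$, is a genuinely different route from the paper (which enumerates the lattice points of $g_d\cdot[v,w)$, sandwiches the two ceilings between consecutive lattice values, and exhibits an explicit minimizing $c$ with $\langle c,v^{i+1}-v^i\rangle=1$). The $m=1$ core of your argument is sound. But two of your reductions have genuine gaps.

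First, discarding the orientation $\langle c,w\rangle<\langle c,v\rangle$ because the sign-flip ``alters the value by at most $\pm1$'' cannot give the lower bound: knowing $\kpi(\kLp(c,d))\ge k$ for one orientation only yields $\ge k-1$ for the other, and that loss is real. The half-open edges $[v,w)$ and $[w,v)$ contain different numbers of lattice points exactly when one endpoint lies in $N$, so the two window sums $\sum_\alpha N_\alpha$ differ by $g_d$ and your averaging then fails in one direction. Concretely, for $d=[0,\tfrac32]\subset\RR$ (closed edge $2$-short, not $1$-short, so $k=2$) one has $\kpi(\kLp(-1,d))=\lceil 0\rceil-\lceil-\tfrac32\rceil=1$. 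Any correct argument --- including the paper's, which silently works with the single half-open edge $[v,w)$ realizing $kg_d\le\#\{g_d[v,w)\cap N\}<(k+1)g_d$ --- must control each orientation by the count of \emph{its own} half-open edge, not by a $\pm1$ perturbation of the other.

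Second, the $m\ge2$ case is not closed by the proposed length estimate: from $\ell>k-1/g_d$ and window length $g_d\ell m$ one only gets a count $>m(k-1/g_d)-1$, which is nonpositive for $k=1$, $g_d=1$ (take $d=[1-\epsilon,1+\epsilon]$ and $m=2$, window length $4\epsilon$). The bound still holds there, but only because the images $\langle c,p\rangle$ of the actual lattice points $p\in g_d[v,w)$ --- at least $kg_d$ of them, hitting at least $k$ distinct multiples of $g_d$ --- land in the window; that is precisely the paper's mechanism, so this case cannot be outsourced to a pure length bound. Relatedly, ``falling back on the $m\ge2$ estimates'' cannot repair the residue-realizability issue for achieving the minimum, since that requires exhibiting a specific $c$; realizability of every residue $\alpha\bmod g_d$ at $m=1$ does hold, but it needs the observation that the divisibility of $n_0$ modulo $\ZZ e$ is coprime to $g_d$ (forced by minimality of $g_d$), which you do not supply.
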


\begin{proof}
Assume first that $g_d=1$ and denote by 
$v^1,v^2,\ldots,v^\ell$ the sequence of lattice points in the half open 
interval $[v,w)$ with increasing $\kc$-value. 
Then, the assumption means $\ell=k$.
Moreover, denote by 
$v^0$ and $v^{\ell+1}$ the adjacent lattice points, 
hence located outside $[v,w)$.
Then, we have
$\big\lceil\lan c,v\ran\big\rceil\leq \lan c,v^1\ran$ 
and
$\lan c,v^\ell\ran+1 \leq \big\lceil\lan c,w\ran\big\rceil 
\leq \lan c,v^{\ell+1}\ran$.
This implies 
\[
\lceil\lan c,w\ran\big\rceil-\lceil\lan c,v\ran\big\rceil
\geq
\lan c,v^{\ell}\ran +1- \lan c,v^1\ran \geq \ell=k.
\]
On the other hand, let $\kc$ be a special element
of $\kQuot$ such that
$\lan c, v^{i+1}-v^i\ran=1$. Then all the inequalities
in the previous three lines turn into equalities.
\\[1ex]
Let us turn to the case of $g:=g_d\geq 2$.
Again, we name the lattice points $v^1,v^2,\ldots,v^\ell$,
but now inside the half open interval $[gv,gw)$;
the assumption of the proposition means $k\cdot g\leq\ell< (k+1)\cdot g$.
We denote by $g^*$ the first index $i$ such that
$g|\lan c,v^i\ran$. This relation remains valid for all
$i\in (g^*+g\Z)$ among $\{1,\ldots,\ell\}$,
i.e.\ for $i=g^*+\nu g$ with $\nu=0,\ldots,\nu^*:=
\lfloor\frac{\ell-g^*}{g}\rfloor$. 
Now, similarly to the $g=1$ case, we obtain
\[
\textstyle
\big\lceil\lan c,v\ran\big\rceil =
\big\lceil\frac{1}{g}\lan c, gv\ran\big\rceil
\leq \frac{1}{g}\lan c,v^{g^*}\ran
\]
and
\[
\textstyle
\frac{1}{g}\lan c,v^{g^*+\nu^*g}\ran+1 \leq \big\lceil\frac{1}{g}\lan c,gw\ran\big\rceil
\leq \frac{1}{g}\lan c,v^{\ell+1}\ran.
\]
This implies
\[
\textstyle
\lceil\lan c,w\ran\big\rceil-\lceil\lan c,v\ran\big\rceil
\geq
\frac{1}{g}\lan c,v^{g^*+\nu^*g}\ran+1 
- \frac{1}{g}\lan c,v^{g^*}\ran
\geq \nu^*+1 = \lfloor\frac{\ell+g-g^*}{g}\rfloor\geq
\lfloor\frac{\ell}{g}\rfloor=k.
\]
To show that this minimal value can be achieved, we choose again $\kc$
in such a way that $\lan c, v^{i+1}-v^i\ran=1$. Similarly to the first case,
this yields always equality signs until
$\lceil\lan c,w\ran\big\rceil-\lceil\lan c,v\ran\big\rceil
=\lfloor\frac{\ell+g-g^*}{g}\rfloor$. However,
since we may adjust our $\kc$ such that it leads to $g^*=g$, the claim is
proven.
\end{proof}

\begin{corollary}\label{cor ker not 0}
If $\kLp(c,d^{ij})\ne 0$
then the degree of $\kLp(c,d^{ij})$ is strictly bigger than $0$. In particular the kernel of the map $\pi_T=\pi:\ktT\to T=\NN$ is $0$.
\end{corollary}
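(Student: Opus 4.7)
The plan is to verify the degree bound first for the distinguished generators $\kLp(c,d^{ij})$ of $\ktT$ via a case analysis on the shortness of the edge, and then to transfer this to the kernel statement by invoking the generating set for $\ktT$ from Proposition \ref{prop generators t}. Using the convention $\kLp(c,-d^{ij})=\kLp(c,d^{ij})$ from Definition \ref{def 53}, I would first fix orientations so that $\langle c,d^{ij}\rangle\geq 0$. If $\langle c,d^{ij}\rangle=0$ then Lemma \ref{problematic lemma} gives $\kLp(c,d^{ij})=0$, contradicting the hypothesis; hence it suffices to treat $\langle c,d^{ij}\rangle>0$, where the discussion preceding Proposition \ref{prop-degtt-kshort} identifies $\pi(\kLp(c,d^{ij}))$ with the non-negative integer $\lceil\langle c,v^j\rangle\rceil-\lceil\langle c,v^i\rangle\rceil$, so the task reduces to ruling out the value zero.

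The main obstacle is the short edge case, which lies outside the hypothesis of Proposition \ref{prop-degtt-kshort}. If $d^{ij}$ is not $0$-short, then it is $k$-short but not $(k-1)$-short for some $k\geq 1$ (finiteness of the set $g_{d^{ij}}\cdot[v^i,v^j)\cap\kQuotD$ makes shortness hold for all sufficiently large $k$), and Proposition \ref{prop-degtt-kshort} yields directly $\pi(\kLp(c,d^{ij}))\geq k\geq 1$. If instead $d^{ij}$ is short, then both half open edges $[v^i,v^j)$ and $[v^j,v^i)$ are short by definition, and Definition \ref{def-TP} imposes the relations $s_{v^i}=t_{ij}=s_{v^j}$ in $\cT^*(\kP)$. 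Substituting these into Definition \ref{def 53} collapses the expression to a single scalar multiple of $t_{ij}$:
\[
\kLp(c,d^{ij})=\big(\lceil\langle c,v^j\rangle\rceil-\lceil\langle c,v^i\rangle\rceil\big)\,t_{ij},
\]
whose coefficient is exactly $\pi(\kLp(c,d^{ij}))$. Non-vanishing of $\kLp(c,d^{ij})$ therefore forces this non-negative integer coefficient to be at least $1$, as required.

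For the ``in particular'' clause, I would apply Proposition \ref{prop generators t} to express any nonzero element of $\ktT$ as an $\NN$-linear combination of the generators $\kLp(c,d^{ij})$ and $s_v$ for $v\in\Vrtx(\kP)$. The generators $s_v$ with $v\in\kQuotD$ are already zero in $\cT^*(\kP)$ by Definition \ref{def-TP} and can be discarded, while for $v\notin\kQuotD$ one has $\pi(s_v)=1$. Combined with what was just shown for the $\kLp$-generators, every nonzero generator carries strictly positive $\pi$-image, and since all $\pi$-images are non-negative integers the total $\pi$-image of any nonzero element of $\ktT$ is itself strictly positive. Hence $\Ker(\pi_T)=0$.
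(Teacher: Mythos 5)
Your reduction to $\langle c,d^{ij}\rangle>0$, your collapse argument when the compact edge is short (both half-open edges short forces $s_{v^i}=t_{ij}=s_{v^j}$, so $\kLp(c,d^{ij})$ becomes its degree times $t_{ij}$), and your deduction of $\Ker(\pi_T)=0$ from the generator description are all fine. The gap is in the other branch. You apply Proposition~\ref{prop-degtt-kshort} to the \emph{compact} edge: if $d^{ij}$ is $k$-short but not $(k-1)$-short with $k\geq 1$, you conclude $\pi(\kLp(c,d^{ij}))\geq k\geq 1$ for every relevant $c$. But a compact edge already fails to be $(k-1)$-short when only \emph{one} of its two half-open edges does, whereas the estimate behind Proposition~\ref{prop-degtt-kshort} is governed by the lattice points of the specific half-open edge $[v^i,v^j)$ taken in the orientation with $\langle c,d^{ij}\rangle>0$. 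These can disagree. Take $P\supseteq d=[\tfrac12,1]\subset\RR$ and $c=1$: here $g_d=1$, the half-open edge $[\tfrac12,1)$ contains no lattice point (so it is $0$-short), but $(\tfrac12,1]$ contains $1$, so the compact edge is $1$-short and not $0$-short. Your branch then asserts $\pi(\kLp(1,d))\geq 1$, whereas in fact $\pi(\kLp(1,d))=\lceil 1\rceil-\lceil\tfrac12\rceil=0$. (The corollary survives because $\kLp(1,d)=\tfrac12 t-\tfrac12 s_{v}=0$, using $s_{v}=t$ from the shortness of $[\tfrac12,1)$ --- but your argument never shows this.)

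The paper avoids the trap by applying Proposition~\ref{prop-degtt-kshort} to the $c$-oriented half-open edge and then treating separately the case where \emph{that} half-open edge is $0$-short. This splits further: either $[v^j,v^i)$ is also short, which is exactly your collapse case; or ($g_d=1$ and) $v^j\in N$, in which case $s_{v^j}=0$ and $\{\langle c,v^j\rangle\}=0$, so once more $\kLp(c,d^{ij})=\big(\lceil\langle c,v^j\rangle\rceil-\lceil\langle c,v^i\rangle\rceil\big)\,t_{ij}$ and a vanishing degree forces the element itself to vanish. As written, the situation ``$[v^i,v^j)$ short but $v^j$ a lattice point'' falls through your dichotomy; adding this sub-case to your collapse argument (rather than invoking the proposition for it) closes the gap.
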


\begin{proof}
We already know that the degree $\kpi\big(\kLp(c,d^{ij})\big)$ is
non-negative. Moreover, by Definition~\ref{def-kshort}, there is a unique
$k\in\NN$ such that the open half edge $d^{ij}$ is precisely $k$-short,
i.e.\ not $(k-1)$-short. Then, Proposition~\ref{prop-degtt-kshort} implies
that the degree is at least $k$, and it remains to treat the case $k=0$.
\\[1ex]
However, if $d^{ij}$ is $0$-short, i.e.\ short, then we know
that $t_{ij}=s_i$ which already solves the case $g_d\geq 2$, since we have
the equation $s_i=s_j$ anyway. Indeed, having the equations
$s_i=t_{ij}=s_j$, then the elements
$\kLp(c,d^{ij})$ and $\kpi\big(\kLp(c,d^{ij})\big)$ are essentially equal,
i.e.\ the vanishing of the latter implies that of the former. 
\\[1ex]
Finally, if $g_d=1$, then the shortness of $[v^i,v^j)$ immediately 
implies the shortness of $(v^i,v^j]$, unless $v^j\in N$. However, the latter
means $\{\lan c,v^j\ran\}=0$ and $s_j=0$, and we are done again.
\end{proof}

\section{Free pairs}
\label{sec:Diagrams}

In this section we introduce the notion of free pair.
In Subsection \ref{relFlatness} we connect it with free and flat modules.
The results of Subsection \ref{subsec kts} appear in \cite{a} as well;
here we provide a slightly different perspective based on the results from Section \ref{sec sem ktt}.

\begin{definition}
Let $T\subset S$ be two sharp monoids, i.e. commutative semigroups with identity satisfying $S\cap (-S)=\{0\}$.
The \emph{boundary of $S$ relative to $T$} is defined as
\[
\partial_{T}{S} = \{ s\in S~:~(s-T)\cap S = \{s\}\}.
\]
We say that $T\subset S$ form a \emph{free pair} $(T,S)$ if  
the addition map $\operatorname{a}:(\partial_{T}{S})\times T\to S$ is bijective.
\end{definition}
For any free pair, we write the unique decomposition of every element $s\in S$ as 
\[
s = \bo(s) + \la(s)
\hspace{1em}\mbox{with}\hspace{1em}
\bo(s)\in\partial_{T}{S} 
\hspace{0.5em}\mbox{and}\hspace{0.5em}
\la(s)\in S.
\]

\begin{example}
  When $\kT\subseteq \kS$ is the pair of semigroups associated to a rational polyhedron introduced in Section \ref{sec setup},
  we have by \cite[Proposition 2.10 and Remark 5.3]{a} that the pair $(\kT,\kS)$ is a free pair with
\begin{equation}\label{eq partialst}
\partial_TS=\{[\kc,\ketaZ(\kc)] \kst  c\in\tail(P)\dual\cap M\}.
\end{equation}
\end{example}

\subsection{The relation to free modules}
\label{relFlatness}
Let $k$ be any field. Then, the inclusion $\kiota:T\hookrightarrow S$
gives rise to an embedding of semigroup algebras $k[T]\subseteq k[S]$.

\begin{proposition}
\label{prop-straightFlat}
Assume that the addition map $\ksum:\Rand{T}{S}\times T\to S$ is
surjective. Then the pair
$(T,S)$ is free if and only if $k[S]$ is a  
free $k[T]$-algebra, and this holds 
if and only if $k[S]$ is flat over $k[T]$.
\end{proposition}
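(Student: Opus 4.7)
The plan is to treat the two equivalences in turn, with almost all the work concentrated in the implication \emph{flat $\Rightarrow$ free pair}. The \emph{free pair $\Rightarrow$ free module} direction is immediate: bijectivity of $\ksum$ assigns to each $s\in S$ a unique decomposition $s=\bo(s)+\la(s)$, so $\chi^s=\chi^{\bo(s)}\cdot \chi^{\la(s)}$ and
\[
k[S] \;=\; \bigoplus_{\bo\in\Rand{T}{S}} \chi^\bo\cdot k[T]
\]
as an internal direct sum of $k[T]$-submodules, exhibiting $\{\chi^\bo\}_{\bo\in\Rand{T}{S}}$ as a $k[T]$-basis. The implication \emph{free $\Rightarrow$ flat} is standard.

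For the non-trivial converse, I would assume $k[S]$ is flat over $k[T]$ and $\ksum$ is surjective, and set up the surjective $k[T]$-linear map
\[
\phi\colon F:=\bigoplus_{\bo\in\Rand{T}{S}} k[T]\cdot e_\bo \;\longrightarrow\; k[S],\qquad e_\bo\mapsto\chi^\bo.
\]
Injectivity of $\phi$ is equivalent to injectivity of $\ksum$, i.e.\ to $(T,S)$ being a free pair. Write $\mathfrak{m}:=\langle\chi^t\kst t\in T\setminus\{0\}\rangle\subseteq k[T]$. The key observation is that, by the very definition of $\Rand{T}{S}$, the submodule $\mathfrak{m}\cdot k[S]$ equals the $k$-span of $\{\chi^s\kst s\in S\setminus\Rand{T}{S}\}$: an element of $S$ lies outside $\Rand{T}{S}$ exactly when it admits a nontrivial translate by $T_+$ inside $S$. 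Consequently the reduction $\overline\phi\colon F/\mathfrak{m}F\to k[S]/\mathfrak{m}k[S]$ sends the basis $\{e_\bo\}$ bijectively to $\{\overline{\chi^\bo}\}$ and is an isomorphism.

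Setting $K:=\ker\phi$ and using $\gTor_1^{k[T]}(k[S],k[T]/\mathfrak{m})=0$ by flatness, tensoring $0\to K\to F\to k[S]\to 0$ with $k[T]/\mathfrak{m}$ gives the exact sequence
\[
0\to K/\mathfrak{m}K\to F/\mathfrak{m}F\xrightarrow{\overline\phi}k[S]/\mathfrak{m}k[S]\to 0,
\]
so $K=\mathfrak{m}K$. To conclude $K=0$ I would invoke a graded Nakayama argument: the sharp monoid $T$ admits a degree function $T\to\NN$ strictly positive on $T\setminus\{0\}$ which extends through $T\subset S$ to a positive grading on $k[T]$, $F$, $k[S]$ and hence on $K$; since $\mathfrak{m}$ is positively graded and $K$ is bounded below, a nonzero element of $K$ of minimal degree would be of the form $\sum \chi^{t_i}y_i$ with $y_i\in K$ of strictly smaller degree, a contradiction.

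The main obstacle is exactly this last graded-Nakayama step, because $k[S]$ need not be finitely generated over $k[T]$ and so the usual Nakayama lemma does not apply directly. Its resolution relies on the positive grading available from the sharpness assumption (and cancellativity implicit in the definition of $\Randown$), which is standard in the context of the paper; everything earlier in the argument is purely formal manipulation of the Cartesian square of semigroups underlying $T\subset S$.
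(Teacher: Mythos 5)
Your decomposition of the problem and the easy direction (free pair $\Rightarrow$ $k[S]=\bigoplus_{b\in\Rand{T}{S}}\chi^{b}\cdot k[T]$ is free, hence flat) coincide with the paper's. For the hard direction you take a genuinely different route: the paper argues by contraposition via the equational criterion for flatness. From a coincidence $s+t'=s'+t$ with $s\neq s'$ in $\Rand{T}{S}$ it writes down the presentation $\bigoplus_{i}k[T]e_i\to k[T]^2\to k[T]$ of the kernel of $(\chi^{t'},-\chi^{t})$, tensors with $k[S]$, and checks by homogeneity that the relation $(\chi^{s},\chi^{s'})$ cannot lift, so $k[S]$ is not flat. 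That argument uses nothing beyond cancellativity and the defining property of $\Rand{T}{S}$; in particular it needs no grading, no finiteness, and no Nakayama. Your route via $\gTor_1^{k[T]}(k[S],k[T]/\mathfrak{m})=0$, the isomorphism $F/\mathfrak{m}F\cong k[S]/\mathfrak{m}k[S]$ (your identification of $\mathfrak{m}k[S]$ with the span of $\{\chi^{s}\kst s\notin\Rand{T}{S}\}$ is correct, and is exactly where surjectivity of $\ksum$ enters), and a Nakayama argument is the standard ``local criterion'' alternative — but it concentrates all the difficulty in the one step you defer.

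That step is a genuine gap as written. First, for ``a nonzero element of $K$ of minimal degree'' to exist, $K$ must be a \emph{graded} submodule of $F$, which forces you to grade $F$ by total degree, $\deg(\chi^{t}e_{b})=\deg(b)+\deg(t)$; so you need an additive degree function on all of $S$, not merely on $T$. With $e_{b}$ placed in degree $0$, the kernel elements $\chi^{t}e_{b}-\chi^{t'}e_{b'}$ (where $b+t=b'+t'$ but possibly $\deg t\neq\deg t'$) are not homogeneous and the minimal-degree argument breaks. Second, an additive map on $S$ with values in $\NN$ (or bounded below in $\RR$) that is strictly positive on $T\setminus\{0\}$ is \emph{not} a consequence of sharpness: the sharp monoid $\{(a,b)\in\ZZ^2\kst a>0\}\cup\{(0,b)\kst b\geq 0\}$ admits no homomorphism to $\RR_{\geq 0}$ that is strictly positive away from $0$. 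Since the proposition is stated for arbitrary sharp monoids, your method does not cover the stated generality. In the paper's actual application, $\ktT\subseteq\ktS$ are finitely generated sharp submonoids of a lattice, a strictly positive integral functional does exist, $K$ becomes $\NN$-graded and bounded below, and your argument closes. To handle the general case you would have to replace graded Nakayama by $K\subseteq\bigcap_{n}\mathfrak{m}^{n}F$ together with a proof that this intersection vanishes — or simply use the paper's relation-lifting argument, which sidesteps all of this.
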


\begin{proof}
If $(\kT,\kS)$ is a free pair, then the bijection
$\ksum:\Rand{\kT}{\kS}\times\kT\stackrel{_\sim}{\longrightarrow}\kS$ provides an
isomorphism of $k[\kT]$-modules
$\bigoplus_{s\in\Rand{\kT}{\kS}}\,k[\kT]\cdot \chi^s
\stackrel{_\sim}{\longrightarrow}k[\kS]$, i.e.\ $k[\kS]$ is a free $k[\kT]$-module.
\\[1ex]
On the other hand, if $s,s'\in\Rand{\kT}{\kS}$ and $t,t'\in\kT$ with
$s+t'=s'+t$ and $s\neq s'$, 
then we consider the exact sequence of $k[\kT]$-modules
\[
\xymatrix@C4em{
\bigoplus_{i\in I}\, k[\kT]\cdot e_i \ar[r]^-{\sum_i(t_i,t_i')} & 
k[\kT] \oplus k[\kT] \ar[r]^-{\left(
\begin{array}{@{}r@{}}\ks t'\\[-0.5ex] \ks -t\end{array}
\right)} & k[\kT]
}
\]
where $I$ parametrizes a generating set 
$\{(\chi^{t_i},\chi^{t_i'}) 
\}$ of 
$\ker\!\left(\begin{array}{@{}r@{}} \chi^{t'}\\[-0.0ex] 
-\chi^t\end{array}\right)$,
i.e.\ it exhibits the minimal pairs $(t_i,t_i')\in\kT^2$ satisfying 
$t_i+t'=t_i'+t$.
Tensorizing with $\otimes_{k[\kT]}k[\kS]$ replaces
$k[\kT]$ with $k[\kS]$ in the above sequence, and we obtain the new element
\[
(\chi^{s},\chi^{s'})\in \ker\!\left(\begin{array}{@{}r@{}} \chi^{t'}\\[-0.0ex] 
-\chi^t\end{array}\right)\otimes\id_{k[\kS]}.
\]
However, this element cannot be in the image of the first map
$\sum_{i\in I} (t_i,t_i')\otimes\id_{k[\kS]}$. Otherwise, there
is an element $s''\in \kS$ such that 
$(t_i+s'',\,t_i'+s'')=(s,s')$ for some $i$. But then, the
defining property of $\Rand{\kT}{\kS}$ would imply that $t_i=t_i'=0$ and
$s=s''=s'$. Hence, $k[\kS]$ is not flat over $k[\kT]$.
\end{proof}

\begin{remark}
\label{rem-straightFlat}
In \cite[Section 11]{MS05} it was shown by cohomological methods
that for so-called affine 
semigroups $T$, i.e.\ for those being subsemigroups of some
$\ZZ^n$, the $\ZZ^n$-graded flat $k[T]$-modules are direct sums
of degree shifts of localizations of $k[T]$. This fits well to the
consequence of Proposition~\ref{prop-straightFlat} stating that
$k[S]$ is flat over $k[T]$ if and only if it is free
(with basis $\Rand{T}{S}$).
\end{remark}

\subsection{The monoid $\ktS$}\label{subsec kts}
Now we will also start analysing the monoid $\ktS$, from Definition \ref{def:upperSemigroups}. We will show that $(\ktT,\ktS)$ is a free pair (see Corollary \ref{cor free par}) from which it follows that $k[\ktS]$ is a free $k[\ktT]$-module by Proposition \ref{prop-straightFlat}.
Recall the notation $T=\NN$, $S=\cone(\kP)^\vee\cap M$ and recall the two maps $\pi_T:\ktT\to\kT$ and $\pi_S:\ktS\to\kS$ from Definition~\ref{def:upperSemigroups}.

\begin{lemma}
\label{cor-HilbBasisSuff}
The monoid $\ktS$ decomposes as $\,\ktS=\ktT + \spann_{\NN}\big\{
[\kc_1,\ktetaZ(\kc_1)],\dots,[\kc_r,\ktetaZ(\kc_r)]\big\},
$ where $\{[c_1,\ketaZ(\kc_1)],\dots,[\kc_r,\ketaZ(\kc_r)]\big\}$ is the Hilbert basis of $S$.
\end{lemma}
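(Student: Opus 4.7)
The plan is to exploit the definition $\ktS=\ktT+\spann_\NN\{[\kc,\ktetaZ(\kc)]\kst\kc\in\tail(\kP)^\vee\cap\kQuot\}$ from Definition~\ref{def:upperSemigroups}. Since the inclusion $\supseteq$ in the claim is obvious, it suffices to show that each one of these generators $[\kc,\ktetaZ(\kc)]$ already lies in $\spann_\NN\{[\kc_1,\ktetaZ(\kc_1)],\dots,[\kc_r,\ktetaZ(\kc_r)]\}$, so that the $\ktT$-summand in the decomposition of $\ktS$ only needs to cover $\ktT$ itself.

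Fix $\kc\in\tail(\kP)^\vee\cap\kQuot$ and pick any Hilbert decomposition
\[ [\kc,\ketaZ(\kc)]=\sum_{i=1}^r n_i\cdot[\kc_i,\ketaZ(\kc_i)]+n_R\cdot R,\qquad n_i,n_R\in\NN, \]
in $\kS$; this exists because the Hilbert basis generates $\kS$. The first, slightly surprising, observation is that $n_R$ is automatically zero: since $\sum n_i[\kc_i,\ketaZ(\kc_i)]\in\kS$, its second coordinate $\sum n_i\ketaZ(\kc_i)$ is at least $\ketaZ(\sum n_i\kc_i)=\ketaZ(\kc)$, and equating second coordinates in the displayed equation therefore forces $n_R\leq 0$, hence $n_R=0$ and $\sum_i n_i\ketaZ(\kc_i)=\ketaZ(\kc)$ (while still $\sum_i n_i\kc_i=\kc$).

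Next I would lift this identity from $\ZZ$ to $\cT^*_\ZZ(\kP)$. A short induction on $N=\sum_i n_i$, at each step choosing some $n_j\geq 1$, setting $\kc'=\kc-\kc_j$ and using the defining relation $\ktetaZ(\kc_j)+\ktetaZ(\kc')-\ktetaZ(\kc_j+\kc')=\ktetaZ(\kc_j,\kc')\in\ktT$, yields
\[ \sum_{i=1}^r n_i\ktetaZ(\kc_i)-\ktetaZ(\kc)\in\ktT.\]
Applying the degree map $\kpi_\kT$ to this element returns $\sum_i n_i\ketaZ(\kc_i)-\ketaZ(\kc)=n_R=0$. Here the second structural input enters: Corollary~\ref{cor ker not 0} tells us that $\kpi_\kT|_{\ktT}$ has trivial kernel, so this $\ktT$-element must itself vanish. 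Hence $\sum_i n_i\ktetaZ(\kc_i)=\ktetaZ(\kc)$, and
\[ \sum_{i=1}^r n_i\cdot[\kc_i,\ktetaZ(\kc_i)]=\Bigl[\textstyle\sum n_i\kc_i,\,\sum n_i\ktetaZ(\kc_i)\Bigr]=[\kc,\ktetaZ(\kc)], \]
which is exactly the required expression.

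The main obstacle is ensuring that no residual element of $\ktT$ survives when $[\kc,\ktetaZ(\kc)]$ is rewritten over the Hilbert data. The two non-trivial ingredients that make this cooperate are: (i) the minimality of $\ketaZ$ as the floor of $\kS$, which kills the potential $R$-contribution and pins down the $\ketaZ$-heights exactly; and (ii) the injectivity of $\kpi_\kT$ on $\ktT$ provided by Corollary~\ref{cor ker not 0}, which upgrades this exact numerical match to an exact match in the finer invariant $\cT^*_\ZZ(\kP)$. The existence of a Hilbert decomposition and the inductive identity for $\ktetaZ$-differences are routine by comparison.
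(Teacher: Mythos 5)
Your proof is correct and follows essentially the same route as the paper: decompose $[\kc,\ketaZ(\kc)]$ over the Hilbert basis of $\kS$, lift the identity to $\cT^*_\ZZ(\kP)$ via the convexity elements $\ktetaZ(\kc_1,\kc_2)\in\ktT$, and conclude from $\ker(\kpi_\kT)=0$ (Corollary~\ref{cor ker not 0}) that the discrepancy in $\ktT$ vanishes. You merely spell out two steps the paper leaves implicit — the vanishing of the $R$-coefficient via minimality of $\ketaZ$, and the telescoping induction showing $\sum_i n_i\ktetaZ(\kc_i)-\ktetaZ(\kc)\in\ktT$ — both of which are fine.
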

\begin{proof}
Let $\kc\in \tail(\kP)^\vee\cap\kQuot$. Then 
$[\kc,\ketaZ(\kc)]\in\kS$, i.e.\ we know that 
\begin{equation}\label{eq st prva}
\textstyle
[\kc,\ketaZ(\kc)] = \sum_{i=1}^r\lambda_i\,[\kc_i,\ketaZ(\kc_i)]
\end{equation}
for certain $\lambda_i\in\NN$. 
Every element of $\ktS$ can be written as $\kts=[\kc,\ktetaZ(\kc)]+\ktt$
for some $\ktt\in\ktT$. We will prove that
\begin{equation}\label{eq fin gen of tS}
  \textstyle
[\kc,\ktetaZ(\kc)]=\sum_{i=1}^r\lambda_i\,[c_i,\ktetaZ(c_i)].
\end{equation}
Indeed, from \eqref{eq st prva} we have $c=\sum_{i=1}^k\lambda_ic_i$.
So it is enough to prove that $\kLp:=\ktetaZ(\kc)-\sum_{i=1}^k\ktetaZ(c_i)=0$.
This follows since $\kLp\in \widetilde{T}$ and $\pi_{T}(\kLp)=\ketaZ(\kc)-\sum_{i=1}^k\ketaZ(c_i)$,
which is zero by \eqref{eq st prva}.
Since $\ker(\pi_T)=0$, by Corollary \ref{cor ker not 0}, we indeed have $\kLp=0$.
Thus  equation \eqref{eq fin gen of tS} holds.
\end{proof}

\begin{corollary}\label{cor gen ts}
The monoid $\ktS$ is finitely generated. Its generators are the generators of $\ktT$ and 
\begin{equation}\label{eq eq tilde s}
\{\widetilde{s}_1:=[\kc_1,\te(\kc_1)],\dots,\widetilde{s}_r:=(\kc_r,\te[\kc_r)]\}\subset \Rand{\ktT}{\ktS}.
\end{equation}
\end{corollary}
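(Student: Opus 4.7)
The plan is to combine Lemma~\ref{cor-HilbBasisSuff}, the finite generation of $\ktT$ (Proposition~\ref{pro 2dim fin gen}), and the injectivity of $\pi_T$ (Corollary~\ref{cor ker not 0}) with the already-established fact that $(T,S)$ is a free pair whose boundary is described by \eqref{eq partialst}.

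For finite generation, I would simply note that Lemma~\ref{cor-HilbBasisSuff} gives
\[
\ktS = \ktT + \spann_{\NN}\{\widetilde{s}_1,\ldots,\widetilde{s}_r\},
\]
while Proposition~\ref{pro 2dim fin gen} supplies a finite generating set for $\ktT$. Together these yield a finite generating set for $\ktS$.

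The only substantive point is to verify that each $\widetilde{s}_i = [c_i,\te(c_i)]$ lies in $\partial_{\ktT}\ktS$. Assume there exist $\widetilde{t}\in\ktT$ and $\widetilde{s}\in\ktS$ with $\widetilde{s}_i = \widetilde{t} + \widetilde{s}$. Applying the projection $\pi_S$ and using the recipe $[c,\te(c)]\mapsto [c,\eta_\ZZ(c)]$ and $\pi_T(\widetilde{t})\mapsto\pi_T(\widetilde{t})\cdot R$ recalled after Definition~\ref{def:upperSemigroups}, this yields
\[
[c_i,\eta_\ZZ(c_i)] - \pi_T(\widetilde{t})\cdot R \;=\; \pi_S(\widetilde{s}) \;\in\; S.
\]
By \eqref{eq partialst} the element $[c_i,\eta_\ZZ(c_i)]$ belongs to $\partial_T S$, and the defining property of the boundary forces $\pi_T(\widetilde{t}) = 0$. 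Invoking Corollary~\ref{cor ker not 0}, which states $\ker \pi_T = 0$, we conclude $\widetilde{t}=0$, and hence $\widetilde{s} = \widetilde{s}_i$. This shows $(\widetilde{s}_i - \ktT)\cap \ktS = \{\widetilde{s}_i\}$, so $\widetilde{s}_i\in \partial_{\ktT}\ktS$ as claimed.

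There is no real obstacle here: the corollary is a bookkeeping consequence of Lemma~\ref{cor-HilbBasisSuff} plus the injectivity of $\pi_T$. The one place to be careful is to correctly translate the boundary condition from the ``upstairs'' pair $(\ktT,\ktS)$ down to the ``downstairs'' pair $(T,S)$ via $\pi_T$ and $\pi_S$, which is precisely where Corollary~\ref{cor ker not 0} does the work by ruling out a nontrivial kernel element $\widetilde{t}$ that would otherwise project to $0\in T$.
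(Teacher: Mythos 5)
Your proposal is correct and takes essentially the same route as the paper: finite generation is read off from Lemma~\ref{cor-HilbBasisSuff} together with Proposition~\ref{pro 2dim fin gen}, and the membership $\widetilde{s}_i\in\Rand{\ktT}{\ktS}$ is exactly the check via $\pi_S$, the description \eqref{eq partialst} of $\Rand{\kT}{\kS}$, and $\ker\pi_T=0$ (Corollary~\ref{cor ker not 0}) that the paper performs in the proof of Corollary~\ref{cor free par}. You have merely written out explicitly the step the paper labels as ``easily checked''.
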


\begin{corollary}\label{cor free par}
The pair $(\widetilde{T},\widetilde{S})$ is free  and we have an isomorphism $\pi_S:\Rand{\ktT}{\ktS}\stackrel{_\sim}{\longrightarrow}\Rand{\kT}{\kS}$. 
\end{corollary}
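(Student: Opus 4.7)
The plan is to reduce everything through the projection $\kpi_S$ to the already-known free pair $(\kT,\kS)$, using as the crucial technical ingredient the kernel triviality $\ker(\kpi_T)=0$ from Corollary~\ref{cor ker not 0}. The natural candidates for elements of $\partial_{\ktT}\ktS$ are the elements $[\kc,\ktetaZ(\kc)]$ with $\kc\in\tail(\kP)^\vee\cap M$, which via $\kpi_S$ map precisely to the description of $\partial_\kT \kS$ in \eqref{eq partialst}.

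First I would verify that $[\kc,\ktetaZ(\kc)]\in\partial_{\ktT}\ktS$. Suppose $[\kc,\ktetaZ(\kc)]=\ktu+\ktt$ with $\ktu\in\ktS$ and $\ktt\in\ktT$. Applying $\kpi_S$ gives $[\kc,\ketaZ(\kc)]=\kpi_S(\ktu)+\kpi_T(\ktt)\cdot R$ in $\kS$. Since $[\kc,\ketaZ(\kc)]\in\partial_\kT \kS$ by \eqref{eq partialst} and $(\kT,\kS)$ is a free pair, the unique decomposition forces $\kpi_T(\ktt)=0$ and $\kpi_S(\ktu)=[\kc,\ketaZ(\kc)]$. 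By Corollary~\ref{cor ker not 0} this yields $\ktt=0$, hence $\ktu=[\kc,\ktetaZ(\kc)]$, which is exactly the defining property of $\partial_{\ktT}\ktS$.

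Next I would show bijectivity of the addition map $\ksum:\partial_{\ktT}\ktS\times\ktT\to\ktS$. Surjectivity is immediate from Lemma~\ref{cor-HilbBasisSuff} together with the identity \eqref{eq fin gen of tS}: any $\ktu\in\ktS$ can be rewritten as $\ktu=\ktt'+[\kc,\ktetaZ(\kc)]$ with $\kc=\sum\lambda_i\kc_i\in\tail(\kP)^\vee\cap M$. For injectivity, suppose $[\kc_1,\ktetaZ(\kc_1)]+\ktt_1=[\kc_2,\ktetaZ(\kc_2)]+\ktt_2$; projecting via $\kpi_S$ gives $[\kc_1,\ketaZ(\kc_1)]+\kpi_T(\ktt_1)R=[\kc_2,\ketaZ(\kc_2)]+\kpi_T(\ktt_2)R$. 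Freeness of $(\kT,\kS)$ then forces $\kc_1=\kc_2$ and $\kpi_T(\ktt_1)=\kpi_T(\ktt_2)$, so $\ktetaZ(\kc_1)=\ktetaZ(\kc_2)$ and $\ktt_1-\ktt_2\in\ker(\kpi_T)=0$ by Corollary~\ref{cor ker not 0}, as desired.

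This argument simultaneously establishes that $\partial_{\ktT}\ktS$ is \emph{exactly} $\{[\kc,\ktetaZ(\kc)]\kst \kc\in\tail(\kP)^\vee\cap M\}$, because any boundary element $\ktu$ admits a decomposition $\ktu=[\kc,\ktetaZ(\kc)]+\ktt$, and uniqueness forces $\ktt=0$. Under $\kpi_S$ the element $[\kc,\ktetaZ(\kc)]$ is sent to $[\kc,\ketaZ(\kc)]$, giving a well-defined set-theoretic bijection $\partial_{\ktT}\ktS\stackrel{\sim}{\to}\partial_\kT\kS$ by comparison with \eqref{eq partialst}. The only delicate point in the whole argument is the use of $\ker(\kpi_T)=0$; without it there could be hidden torsion-type phenomena upstairs that are invisible downstairs, so Corollary~\ref{cor ker not 0} is what makes the reduction to $(\kT,\kS)$ clean.
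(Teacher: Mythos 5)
Your proposal is correct and follows essentially the same route as the paper: identify $\Rand{\ktT}{\ktS}$ as $\{[\kc,\ktetaZ(\kc)]\}$ using $\ker(\kpi_T)=0$ from Corollary~\ref{cor ker not 0}, and then deduce both the isomorphism with $\Rand{\kT}{\kS}$ and the uniqueness of the decomposition by projecting via $\kpi_S$ to the known free pair $(\kT,\kS)$. You merely spell out the details (membership of the candidates in the boundary, surjectivity via Lemma~\ref{cor-HilbBasisSuff}) that the paper compresses into ``we can easily check''.
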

\begin{proof}
Using Corollary \ref{cor ker not 0} we can easily check that 
\[
  \Rand{\ktT}{\ktS}= \{[\kc,\ktetaZ(\kc)] \kst  c\in\tail(P)\dual\cap M\}.
\]
We get then the isomorphism $\pi_S:\Rand{\ktT}{\ktS}\stackrel{_\sim}{\too}\Rand{\kT}{\kS}$ using the description of $\Rand{\kT}{\kS}$ in the equation \eqref{eq partialst}.\\[1ex]
To prove that $(\ktT,\ktS)$ is free let us 
assume that $\wt{b}_1+\ktt_1=\wt{b}_2+\ktt_2$, with $\wt{b}_i\in\Rand{\ktT}{\ktS}$ and $\ktt_i\in\ktT$. Applying the map $\pi_S:\ktS\to S$ we obtain
\[
  \kpi_S(\wt{b}_1)+\kpi_S(\ktt_1)=\kpi_S(\wt{b}_2)+\kpi_S(\ktt_2).
\]
We have $\kpi(\ktt_1),\kpi(\ktt_2)\in\kT$ and using the isomorphism on the boundaries we get $\kpi_S(\wt{b}_1),\kpi_S(\wt{b}_2)\in\Rand{\kT}{\kS}$.
Since $(\kT,\kS)$ is a free pair, we have that $\kpi_S(\wt{b}_1)=\kpi_S(\wt{b}_2)$. Again by the isomorphism on the boundaries we obtain $\wt{b}_1=\wt{b}_2$, and thus $\ktt_1=\ktt_2$, so the decomposition is unique.
\end{proof}

\section{Syzygies of the free pair $(\ktT,\ktS)$}\label{sec flat}
\subsection{Binomial equations}
Recall from  \eqref{eg hilbbas} the Hilbert basis $\{s_1,\dots,s_r, r\}$ of $\kS$, and the 
liftings $\widetilde{s}_i\in\ktS$  of the $s_i\in\kS$ from \eqref{eq eq tilde s}. 
Let $\kLp_0,\dots,\kLp_g$ be a set of generators of $\ktT$, and thus from
Corollary \ref{cor gen ts} it follows that 
$\kLp_0,\dots,\kLp_g,\widetilde{s}_1,\dots,\widetilde{s}_r$ generate $\widetilde{S}$.
Let us introduce also the following notation

\begin{eqnarray*}
  k[\kS]&=&k[t,x_1,\dots,x_r]/\cI_\kS,\\
  k[\ktT]&=&k[u_0,\dots,u_g]/\cI_\ktT,\\
  k[\ktS]&=&k[u_0,\dots,u_g,x_1,\dots,x_r]/\cI_\ktS.  
\end{eqnarray*}

\begin{definition}\label{def flat 1}
 For $\bfk=(k_1,\dots,k_r)\in \NN^r$ let $x^\bfk:=\prod_{i=1}^rx_i^{k_i}$, 
  and let 
  \[
    \textstyle
    \setlength\arraycolsep{2pt}
    \begin{array}{rclcrcl}
      \bo(\bfk)&:=&\bo(\sum_{j=1}^rk_j[c_j,\eta_\ZZ(c_j)])\in \partial_TS,&\qquad&\la(\bfk)&:=&\la(\sum_{j=1}^rk_j[c_j,\eta_\ZZ(c_j)])\in T,\\
\rule{0pt}{2em} \tbo(\bfk)&:=&\tbo(\sum_{j=1}^rk_j[c_j,\kteta_\ZZ(c_j)])\in \partial_\ktT\ktS,&&\tla(\bfk)&:=&\tla(\sum_{j=1}^rk_j[c_j,\kteta_\ZZ(c_j)])\in \ktT.
    \end{array}
  \]
\end{definition}
Note that the isomorphism $\Rand{\ktT}{\ktS}\stackrel{_\sim}{\too}\Rand{\kT}{\kS}$ from Corollary \ref{cor free par}
sends  $\tbo(\bfk)$ to $\bo(\bfk)$.
We will identify the two and write $\bo(\bfk)=\tbo(\bfk)$.
Note also that the map $\pi_T$ from Definition \ref{def:upperSemigroups} maps $\tla(\bfk)$ to $\la(\bfk)$.

\begin{definition}\label{def flat 2}
For each element $s\in S$ (resp. $\widetilde{s}\in \ktS$) we fix a representation
$s=a_0R+\sum_{i=1}^ra_is_i$ (resp.
$\widetilde{s}=\sum_{j=0}^gn_j\kLp_j+\sum_{i=1}^ra_i\widetilde{s}_i)$ 
with $\sum_{i=1}^ra_i\widetilde{s}_i=\sum_{i=1}^ra_is_i$ inside
$\Rand{\ktT}{\ktS}\stackrel{_\sim}{\too}\Rand{\kT}{\kS}$.
Define
\[
  \textstyle
  x^s:=t^{a_0}\prod_ix_i^{a_i},\qquad x^{\bo(\widetilde{s})}:=\prod_ix_i^{a_i},\qquad u^{\tla(\widetilde{s})}:=\prod_ju_j^{n_j}.
\]
In particular we can present $\bo(\bfk)=\tbo(\bfk)$ as an element of $\NN^r$.
We define the binomials
\[
  f_\bfk:=x^\bfk - x^{\bo(\bfk)}\,t^{\la(\bfk)},~~~F_\bfk:=x^\bfk -  x^{\bo(\bfk)}\,u^{\tla(\bfk)}.
\]
\end{definition}

\begin{lemma}\label{l:liftingRelations}
The binomials $f_\bfk$ generate the ideal $I_\kS=\ker (\varphi:k[t,x_1,\dots,x_r]\to k[\kS])$
 and the binomials $F_\bfk$ generate the ideal
 \(
 \ker( \wt{\varphi}:k[\ktT][x_1,\dots,x_r] \longrightarrow k[\ktS]).
 \)
\end{lemma}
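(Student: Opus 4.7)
The plan is to prove both statements by a single normal-form template, exploiting the free pair structures $(T,S)$ and $(\ktT, \ktS)$, respectively. In each case I will use the given binomials to define a rewriting procedure that brings every monomial to a canonical form, and then argue that this canonical form determines the image under $\varphi$, respectively $\wt\varphi$.

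For the first statement, $I_S$ is the defining ideal of a semigroup algebra and is therefore generated by binomials, so it suffices to show every such binomial lies in $(f_\bfk)$. The free pair $(T,S)$ gives, for every exponent vector $\bfk$, a unique decomposition $\sum_j k_j[c_j,\eta_\ZZ(c_j)] = \bo(\bfk) + \la(\bfk) R$, and the binomial $f_\bfk$ precisely implements the corresponding rewriting of monomials: $x^\bfk \equiv x^{\bo(\bfk)} t^{\la(\bfk)} \pmod{f_\bfk}$. The right-hand side is a normal form, since $\bo(\bfk) \in \partial_T S$ is stable under further reduction. Two monomials in $k[t,x_1,\dots,x_r]$ have the same image under $\varphi$ if and only if they reduce to the same normal form, so the kernel of $\varphi$ is generated by the $f_\bfk$.

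The second statement follows by the same template, now using the free pair $(\ktT,\ktS)$ from Corollary~\ref{cor free par}. In $k[\ktT][x_1,\dots,x_r]$, each $x^\bfk$ represents $\sum_j k_j \widetilde{s}_j \in \ktS$, which decomposes uniquely as $\bo(\bfk) + \tla(\bfk)$, and the binomial $F_\bfk$ performs the rewriting $x^\bfk \equiv x^{\bo(\bfk)} u^{\tla(\bfk)} \pmod{F_\bfk}$. Repeated application brings every element of $k[\ktT][x_1,\dots,x_r]$ into the canonical form $\sum_{b\in\partial_\ktT\ktS} g_b(u)\, x^b$ with $g_b \in k[\ktT]$.

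The key step, and the main point requiring care, is injectivity of the canonical forms into $k[\ktS]$: if two such sums have the same image, then their coefficients $g_b(u)$ must coincide term by term. This is precisely what Proposition~\ref{prop-straightFlat} delivers, since the free pair property for $(\ktT,\ktS)$ makes $k[\ktS]$ a free $k[\ktT]$-module with basis $\{\chi^b\}_{b \in \partial_\ktT\ktS}$; hence the monomials $x^b$ for $b \in \partial_\ktT \ktS$ are $k[\ktT]$-linearly independent in $k[\ktS]$. Therefore $\ker\wt\varphi = (F_\bfk)$, and the analogous linear independence for $(T,S)$—freeness of $k[S]$ over $k[T] = k[t]$ with basis indexed by $\partial_T S$—concludes the first statement as well.
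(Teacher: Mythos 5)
Your proof is correct and follows essentially the same route as the paper: both arguments reduce to the fact that the kernel is a homogeneous (binomial) ideal and that the binomials $f_\bfk$, resp.\ $F_\bfk$, rewrite every monomial into the canonical form dictated by the unique free-pair decomposition $s=\bo(s)+\la(s)$. Your proposal merely spells out a step the paper leaves implicit, namely that distinct canonical forms have distinct images because $k[\ktS]$ is free over $k[\ktT]$ with basis $\partial_{\ktT}\ktS$ (Proposition~\ref{prop-straightFlat} together with Corollary~\ref{cor free par}).
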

\begin{proof}
Let us only prove the second statement, the first one follows analogously.
By construction we have $F_\bfk \in \ker(\wt{\varphi})$. 
Since $\ker(\wt{\varphi})$ is $\ktS$-homogeneous, the kernel is spanned by binomials of the form 
\[
  u^\bfa x^\bfk-x^{\bo(\bfa+\bfk)}u^{\tla(\bfa+\bfk)}=u^\bfa F_\bfk,
\]
where $\bfa\in \NN^{r}$, which concludes the proof.
\end{proof}

\subsection{Lifting syzygies}
We start with a general lemma which will turn out useful. 
\begin{lemma}\label{r:projectionOfSum}
For any free pair $(\kT,\kS)$ and for any $w_1,w_2 \in \kS$ we have
\begin{eqnarray*}
  \bo(w_1+w_2)&=&\bo(\bo(w_1)+\bo(w_2)),\\
  \la(w_1+w_2)-\la(w_1)-\la(w_2)&=&\la(\bo(w_1)+\bo(w_2)).
\end{eqnarray*}
\end{lemma}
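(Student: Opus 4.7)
The plan is to reduce both identities to the uniqueness of the free-pair decomposition. Since $(T,S)$ is a free pair, every $s\in S$ can be written uniquely as $s=\bo(s)+\la(s)$ with $\bo(s)\in\partial_T S$ and $\la(s)\in T$; thus it suffices to exhibit a decomposition of $w_1+w_2$ whose boundary part matches the right-hand side of the first identity, and whose $T$-part matches what the second identity predicts.

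First, I would write out the two obvious decompositions
\[
w_1+w_2=\big(\bo(w_1)+\bo(w_2)\big)+\big(\la(w_1)+\la(w_2)\big),
\]
and then decompose the $S$-element $\bo(w_1)+\bo(w_2)$ once more as
\[
\bo(w_1)+\bo(w_2)=\bo(\bo(w_1)+\bo(w_2))+\la(\bo(w_1)+\bo(w_2)),
\]
with boundary part in $\partial_T S$ and $T$-part in $T$. Substituting this into the previous line yields
\[
w_1+w_2=\bo(\bo(w_1)+\bo(w_2))+\big(\la(\bo(w_1)+\bo(w_2))+\la(w_1)+\la(w_2)\big).
\]
The first summand lies in $\partial_T S$ and the bracketed expression lies in $T$ (using that $T$ is closed under addition).

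Invoking uniqueness of the decomposition of $w_1+w_2\in S$, I then read off
$\bo(w_1+w_2)=\bo(\bo(w_1)+\bo(w_2))$, which is the first claim, and
$\la(w_1+w_2)=\la(\bo(w_1)+\bo(w_2))+\la(w_1)+\la(w_2)$, which rearranges to the second. There is no real obstacle here beyond bookkeeping; the only thing to double-check is that the decomposition of an element of $\partial_T S$ itself is non-trivial in general (the free pair property forces $\bo$ to act as identity on $\partial_T S$ only when $T$ has no non-zero element available to shift by, so the lemma genuinely has content for the $\la$-identity, while for the $\bo$-identity it reflects an idempotency of $\bo\circ(+)$ on boundary classes).
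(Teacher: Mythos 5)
Your proof is correct and follows exactly the paper's argument: decompose $w_1+w_2$ via the two boundary parts, re-decompose $\bo(w_1)+\bo(w_2)$, and invoke uniqueness of the free-pair decomposition. (Only your closing aside is slightly off --- $\bo$ is \emph{always} the identity on $\partial_T S$ since $s=s+0$ is the unique decomposition there; the lemma has content because $\bo(w_1)+\bo(w_2)$ need not lie in $\partial_T S$ --- but this does not affect the argument.)
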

\begin{proof} To conclude it is enough to apply the unique decomposition of $w_1+w_2$ in the following:
\begin{eqnarray*} 
  \bo(w_1+w_2)+\la(w_1+w_2)&=& w_1 + w_2\\
                           &=& \bo(w_1)+\bo(w_2)+\la(w_1)+\la(w_2)\\
                           &=&\bo(\bo(w_1)+\bo(w_2))+\la(\bo(w_1)+\bo(w_2))+\la(w_1)+\la(w_2).
\end{eqnarray*}
\end{proof}

Let $\cR$ denote the kernel of the map
\[
  \textstyle
  \psi:\bigoplus_{\bfk\in \NN^r}k[t,x_1,\dots,x_r]e_\bfk\xrightarrow{e_\bfk\mapsto f_\bfk} \cI_\kS\subset k[t,x_1,\dots,x_r].
\]
Thus $\cR$ is the module of linear relations between the $f_\bfk$. 
\begin{definition}\label{d:sysygyGenerators}
  For every $\bfa,\bfk\in\NN^r$ we define 
  \[
    R_{\bfa,\bfk} := e_{\bfa+\bfk} - x^\bfa e_\bfk - t^{\la(\bfk)}e_{\bo(\bfk)+\bfa}.
  \]
\end{definition}
To check that $R_{\bfa,\bfk}\in \cR$ we compute:
  \begin{eqnarray*}\label{eq pom com}
    \psi(R_{\bfa,\bfk}) & = & x^{\bfa+\bfk}- x^{\bo(\bfa+\bfk)}t^{\la(\bfa+\bfk)} - \\
    && - x^\bfa\left(  x^\bfk - x^{\bo(\bfk)}t^{\la(\bfk)} \right) - t^{\la(\bfk)}\left( x^{\bo(\bo(\bfk)+\bfa)} - x^{\bo(\bo(\bfk)+\bfa))}t^{\la(b(\bfk)+\bfa)}\right)\\  
 &=&x^{\bo(\bfa+\bo(\bfk))}t^{\la(\bfa+\bo(\bfk))+\la(\bfk)}-x^{\bo(\bfa+\bfk)}t^{\la(\bfa+\bfk)}=0,
  \end{eqnarray*}
where the last equality we obtain by Lemma \ref{r:projectionOfSum}.

\begin{lemma}\label{l:syzygyGenerators}
The module $\cR$ is spanned by $R_{\bfa,\bfk}$, for $\bfa,\bfk\in \NN^r$. 
\end{lemma}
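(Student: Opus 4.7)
The approach is to reduce an arbitrary syzygy modulo $\cR' := \span\{R_{\bfa,\bfk}\}$ to a normal form, and then identify the residual. Since $\psi$ is $S$-graded via $\deg(e_\bfk) = \sum_i k_i s_i$, the syzygy module $\cR$ inherits the $S$-grading, so it suffices to treat an $S$-homogeneous syzygy
\[
  \xi = \sum_j c_j\, t^{a_0^j} x^{\bfa^j} e_{\bfk_j}, \qquad c_j\in k^{*},
\]
with pairwise distinct basis elements.

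For each summand, the generator $t^{a_0^j} R_{\bfa^j,\bfk_j} \in \cR'$ furnishes the congruence
\[
  t^{a_0^j} x^{\bfa^j} e_{\bfk_j} \equiv t^{a_0^j} e_{\bfa^j+\bfk_j} - t^{a_0^j + \la(\bfk_j)} e_{\bo(\bfk_j)+\bfa^j} \pmod{\cR'}.
\]
Summing over $j$ gives $\xi \equiv \eta \pmod{\cR'}$, where the coefficients of the basis elements $e_{\bullet}$ in $\eta$ are pure powers of $t$ (no $\bfx$-monomials). Because $\cR' \subseteq \cR$, we still have $\eta \in \cR$, and everything reduces to proving $\eta \in \cR'$.

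The proof of this final inclusion rests on two ingredients. First, the special case $R_{0,\bfk} = -t^{\la(\bfk)} e_{\bo(\bfk)} \in \cR'$ (which is consistent with $f_{\bo(\bfk)}=0$ since $\bo(\bfk)$ is already in $\partial_T S$) tells us that $t^{n} e_\bfb \in \cR'$ for every $n\in\NN$ and every canonical boundary representative $\bfb$. Second, Lemma \ref{r:projectionOfSum} yields the polynomial identity $f_{\bfa+\bfk} = x^\bfa f_\bfk + t^{\la(\bfk)} f_{\bo(\bfk)+\bfa}$, which is the image of $R_{\bfa,\bfk}$ under $\psi$. Applying $\psi$ to $\eta$ and simplifying via this identity recovers $\psi(\xi)=0$ as a polynomial equation in $k[t,\bfx]$. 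The uniqueness of the boundary decomposition in the free pair $(T,S)$ matches each positive monomial $t^{a_0^j} x^{\bfa^j+\bfk_j}$ with the corresponding negative monomial $t^{a_0^{j'}+\la(\bfk_{j'})} x^{\bfa^{j'}+\bo(\bfk_{j'})}$ consistently; translating these pairings back into the free module and folding in the $R_{0,\bfm}$-relations writes $\eta$ as an explicit $k[t,\bfx]$-combination of $R_{\bfa,\bfk}$'s.

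The main obstacle is exactly this last matching argument: one must carefully use Lemma \ref{r:projectionOfSum} to upgrade the polynomial cancellations in $\psi(\eta)=0$ to cancellations of basis elements in $\eta$. The routine combinatorial bookkeeping is made possible only by the uniqueness of the decomposition $S = \partial_T S + T$ inherent to a free pair.
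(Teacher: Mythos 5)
Your strategy is the same as the paper's: the first reduction, using $t^{a_0^j}R_{\bfa^j,\bfk_j}$ to replace every $x^{\bfa}e_{\bfk}$ by $e_{\bfa+\bfk}-t^{\la(\bfk)}e_{\bo(\bfk)+\bfa}$ until all coefficients are pure powers of $t$, is exactly the paper's first step, and the residual analysis of $\psi(\eta)=0$ is where the paper also does its work. The step you flag as ``the main obstacle'' does close, but not by the bijective matching of positive against negative monomials that you describe; the mechanism is a \emph{non}-matching argument. Write $\eta=\sum_{\bfk}g_{\bfk}(t)\,e_{\bfk}$ with $g_{\bfk}\in k[t]$. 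A monomial $t^{n}x^{\bfk}$ coming from the positive part $g_{\bfk}(t)x^{\bfk}$ can only be cancelled by a monomial of the form $t^{n'+\la(\bfk')}x^{\bo(\bfk')}$, and equality of exponent vectors in $\NN^{r+1}$ forces $\bfk=\bo(\bfk')$; since the fixed representation of Definition~\ref{def flat 2} satisfies $\bo(\bo(\bfk'))=\bo(\bfk')$, this means $\bfk$ must itself be a canonical boundary representative. Hence $g_{\bfk}=0$ whenever $\bfk\neq\bo(\bfk)$, and the surviving terms $e_{\bfk}$ with $\bfk=\bo(\bfk)$ satisfy $f_{\bfk}=0$ and $e_{\bfk}=-R_{\underline{0},\bfk}\in\operatorname{span}\{R_{\bfa,\bfk}\}$ --- which is precisely your first ingredient. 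No pairing of the remaining terms, and no appeal to Lemma~\ref{r:projectionOfSum} beyond what you already used in the reduction, is needed. The paper packages this same cancellation slightly differently (it divides out the minimal power of $t$, inspects the $t^{0}$-part of the relation, and uses $S$-homogeneity to force $\bo(\bfk_j)=\bo(w)$, reaching a contradiction), whereas your setup yields a direct argument; in fact your explicit observation that $e_{\bfb}\in\cR$ is already a generator $-R_{\underline{0},\bfb}$ cleanly handles the borderline terms that the paper's ``$R_0$ is the empty sum'' glosses over. So: correct approach and correct ingredients; only your final paragraph should be replaced by the observation that unmatched pure-$x$ monomials annihilate their coefficients.
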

\begin{proof}
  Let $R=\sum g_ie_{\bfk_i}\in \cR$ be a homogeneous relation in $\kS$-degree $w$.
Computing modulo $\langle R_{\bfa,\bfk}\rangle$, we can always replace $x^\bfa e_\bfk$ by $e_{\bfa+\bfk} - t^{\la(\bfk)}e_{\bo(\bfk)+\bfa}$. So we may assume that each $g_i=\alpha_it^{a_i}$. 
Moreover, we can assume that there exists an index $i$ such that $a_i=0$ (otherwise, divide $R$ by the minimal power of $t$). Let $R_0=\sum_{a_i=0}\alpha_it^{0}e_{\bfk_i}$. We have $\psi(R_0)=\sum_{a_i=0}\alpha_i(x^{\bfk_i} - x^{\bo(\bfk_i)}t^{\la(\bfk_i)})=0$,  and furthermore, each $\alpha_ix^{\bfk_i}$ must cancel with some $\alpha_jx^{\bo(\bfk_j)}t^{\la(\bfk_j)}$, so $\la(\bfk_j)=0$ for each $j$ with $a_j=0$. This, together with $S$-homogeneity, implies that $x^{\bo(\bfk_j)}=x^{\bo(w)}$ for all $j$ with $a_j=0$. Thus actually $R_0$ is the empty sum, contradicting the existence of an $a_i=0$ in $R$.
\end{proof}

Let $\widetilde{\cR}$ denote the kernel of the map
\[
  \widetilde{\psi}:\bigoplus_{\bfk\in \NN^r}k[u_0,\dots,u_g,x_1,\dots,x_r]E_\bfk\xrightarrow{E_\bfk\mapsto F_\bfk} \cI_\ktS\subset k[u_0,\dots,u_g,x_1,\dots,x_r].
\]
Thus $\widetilde{\cR}$ is the module of linear relations between $F_\bfk$. 

\begin{definition}\label{d:liftingSyzygies}
  For each $\bfa,\bfk \in \NN^r$ we define the relation among the generators of $\ktS$ given in Lemma~\ref{l:liftingRelations}:
\[ \wtR_{\bfa,\bfk}=E_{\bfa+\bfk} - x^\bfa E_\bfk - u^{\tla(\bfk)}E_{\tbo(\bfk)+\bfa}.\]  
\end{definition}

As we did for $R_{\bfa,\bfk}$ we also compute in this case that 
\begin{equation}\label{eq rel rac}
\widetilde{\psi}(\wtR_{\bfa,\bfk})=x^{\bo(\bfa+\bo(\bfk))}u^{\tla(\bfa+\bo(\bfk))+\tla(\bfk)}-x^{\bo(\bfa+\bfk)}u^{\tla(\bfa+\bfk)},
\end{equation}
which is equal to $0$ in $k[\ktT][x_1,\dots,x_r]$ by Lemma \ref{r:projectionOfSum}. 
In particular, $R_{\bfa,\bfk}$ lifts to $\wtR_{\bfa,\bfk}$.

\subsection{Explicit description of $\tla(\bfk)$}

We write $\bfk=(k_1,\dots,k_r)\in \NN^r$ and $c=\sum_{i=1}^rk_ic_i$,
where the $c_i\in\kQuot$ are the elements appearing in the Hilbert basis of $S$, see \eqref{eg hilbbas}.
 Recall the elements $\tla(\bfk)$ and $\bo(\bfk)$ from Definition \ref{def flat 1}.
\begin{lemma}\label{lem pla lem}
  For all $\bfk\in\NN^r$ we have $\bo(\bfk)=[\kc,\eta_\ZZ(\kc)]$, 
    $\tbo(\bfk)=[\kc,\te(\kc)]$ and
\[\textstyle
  \tla(\bfk)=\te(\bfk):=[0,\sum_{i=1}^rk_i\te(\kc_i)-\te(\kc)].
  \]
\end{lemma}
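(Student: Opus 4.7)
The plan is to read off all three claims from the uniqueness of the decomposition in a free pair, using the explicit descriptions of $\partial_T S$ from \eqref{eq partialst} and $\partial_{\ktT}\ktS$ from Corollary \ref{cor free par}. The key observation, which I want to exploit throughout, is that every element of $T$ (resp.\ $\ktT$) has trivial $M$-component: for $T=\NN\cdot R=\NN\cdot[\un{0},1]$ this is by definition, and for $\ktT$ it follows from Definition \ref{def:upperSemigroups}, since the generators $[0,\kteta_\ZZ(c_1,c_2)]$ all sit in $\{0\}\oplus\cT^*_\ZZ(P)$.

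First, for $\bo(\bfk)$: setting $c=\sum_j k_j c_j\in\tail(P)^\vee\cap M$, one computes
\[
w:=\sum_{j=1}^r k_j[c_j,\eta_\ZZ(c_j)] \;=\; \bigl[c,\textstyle\sum_j k_j\eta_\ZZ(c_j)\bigr]\in S.
\]
Since $(T,S)$ is a free pair there is a unique decomposition $w=\bo(w)+\la(w)$ with $\bo(w)\in\partial_T S$ and $\la(w)\in T$. Because $\la(w)$ has zero $M$-component, $\bo(w)$ must carry the full $M$-component $c$, and \eqref{eq partialst} then forces $\bo(w)=[c,\eta_\ZZ(c)]$. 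This proves the first identity and, as a byproduct, gives $\la(\bfk)=[0,\sum_j k_j\eta_\ZZ(c_j)-\eta_\ZZ(c)]$.

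Next I would run the same argument verbatim one level up. Put $\tilde w:=\sum_{j=1}^r k_j[c_j,\te(c_j)]=[c,\sum_j k_j\te(c_j)]\in\ktS$. By Corollary \ref{cor free par} the pair $(\ktT,\ktS)$ is free with $\partial_{\ktT}\ktS=\{[c',\te(c')]:c'\in\tail(P)^\vee\cap M\}$, so there is a unique decomposition $\tilde w=\tbo(\tilde w)+\tla(\tilde w)$. Again $\tla(\tilde w)\in\ktT$ has zero $M$-component, so matching $M$-components gives $\tbo(\tilde w)=[c,\te(c)]$, which is the second identity. Subtracting yields $\tla(\bfk)=\tilde w-\tbo(\tilde w)=[0,\sum_j k_j\te(c_j)-\te(c)]$, which is the third identity.

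The only step that needs a sanity check rather than the bare uniqueness argument is that the candidate $[0,\sum_j k_j\te(c_j)-\te(c)]$ really lies in $\ktT$: this is not obvious from the generator list $\kteta_\ZZ(c_1,c_2)$ of Definition \ref{def:upperSemigroups}. But this is not an additional obligation — it is automatic from the free pair property, since by Corollary \ref{cor free par} the element $\tla(\tilde w):=\tilde w-\tbo(\tilde w)$ of any $\tilde w\in\ktS$ is by definition in $\ktT$, and we have just pinned down $\tbo(\tilde w)=[c,\te(c)]$ by the $M$-component argument. Consistency with the bottom level is the expected one: applying $\pi_T$ to $\tla(\bfk)$ returns $\sum_j k_j\eta_\ZZ(c_j)-\eta_\ZZ(c)=\la(\bfk)\in\NN$, in agreement with Corollary \ref{cor ker not 0} (which guarantees that this $\NN$-value uniquely determines nothing extra, but at least confirms $\tla(\bfk)$ has the correct degree). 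The main (very minor) obstacle is thus bookkeeping: making sure one does not conflate the $\ZZ$-valued $\eta_\ZZ$ on the base with the $\cT^*_\ZZ(P)$-valued $\te$ upstairs, and keeping the free-pair uniqueness arguments on the two levels cleanly separated.
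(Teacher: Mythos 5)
Your proof is correct and follows essentially the same route as the paper: exhibit $\sum_i k_i[c_i,\te(c_i)]$ as (element of $\ktT$) plus (element of $\partial_{\ktT}\ktS$) and invoke uniqueness of the free-pair decomposition from Corollary \ref{cor free par}. The only (harmless) difference is one of direction — the paper verifies $[0,\sum_i k_i\te(c_i)-\te(c)]\in\ktT$ directly (a telescoping sum of the generators $\kteta_\ZZ(\cdot,\cdot)$) and reads off the boundary part, whereas you pin down the boundary part by matching $M$-components and obtain membership of the remainder in $\ktT$ for free.
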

\begin{proof}
We have 
$\sum_{i=1}^rk_i[c_i,\te(c_i)]=[0,\sum_{i=1}^rk_i\te(c_i)-\te(c)]+[c,\te(c)]$ with $[c,\te(c)]\in \ktS$ and $[0,\sum_{i=1}^rk_i\te(c_i)-\te(c)]\in \ktT$, which concludes the proof.
\end{proof}

By Definition \ref{def flat 2} we treat $\bo(\bfk)$ as an element of $\NN^r$, say $\bo(\bfk)=(b_1,\dots,b_r)\in \NN^r$.
This means that 
\begin{equation}\label{eq boundary}
  \textstyle
\bo(\bfk)=[c,\eta_\ZZ(c)]=\sum_{j}b_j[c_j,\eta_\ZZ(c_j)].
\end{equation}
 Recall the definition of $v(c)$ from Section \ref{sec setup} and the paths $\un{\lam}(a), \un{\mu}^j(a), \un{\lam}^j(a)$ from Definition \ref{def path}.

\begin{lemma}\label{lem pom gen 1}
It holds that $\sum_{j=1}^r\lam_\nu^j(c)b_j\lan c_j,d^\nu\ran=\lam_\nu(c)\lan c,d^\nu\ran$ for each compact edge $d^\nu$.
\end{lemma}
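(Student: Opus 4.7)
The plan is to deduce the identity from the vanishing of $\tla(\bfb)$ in $\ktT$, where $\bfb:=\bo(\bfk)=(b_1,\ldots,b_r)$. First, since $\bo(\bfk)=[c,\eta_\ZZ(c)]\in\partial_T S$, one has $\la(\bfb)=\sum_j b_j\,\eta_\ZZ(c_j)-\eta_\ZZ(c)=0$ in $\NN$. Applying Lemma~\ref{lem pla lem} to the exponent vector $\bfb$ gives $\tla(\bfb)=\bigl[\,0,\,\sum_j b_j\te(c_j)-\te(c)\,\bigr]\in\ktT$, and by construction $\pi_T(\tla(\bfb))=\la(\bfb)=0$. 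Corollary~\ref{cor ker not 0} asserts $\ker\pi_T=0$, so $\tla(\bfb)=0$, which is precisely the equation
\[
\sum_{j=1}^r b_j\,\te(c_j)\;=\;\te(c)\qquad\text{in }\cT^*_\ZZ(\kP).
\]

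Next, I would expand both sides of this identity using the path representatives specified in the lemma: the path $\un{\lam}(c)$ for $\te(c)$, and the path $\un{\lam}^j(c)=\un{\lam}(c)+\un{\mu}^j(c)$ for each $\te(c_j)$. Unpacking Definition~\ref{def-etsTildeZ} along a signed path yields, for a generic $c'\in\tail(\kP)^\vee\cap M$ with signed edge-counts $\lam^?_\nu$,
\[
\te(c')\;=\;\{\eta(c')\}\,s_{v(c')}\;-\;\lan v_*,\,c'\ran s_{v_*}\;-\;\sum_\nu \lam^?_\nu\lan d^\nu, c'\ran\,t_\nu.
\]
Substituting these into the identity, the $s_{v_*}$-terms cancel since $\sum_j b_j c_j=c$; comparing the coefficient of each $t_\nu$ then extracts the asserted scalar identity $\sum_j b_j\,\lam^j_\nu(c)\lan c_j,d^\nu\ran=\lam_\nu(c)\lan c,d^\nu\ran$.

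The main subtlety is that $\cT^*_\ZZ(\kP)$ is a quotient carrying the relations of Definition~\ref{def-TP} (short half-open edges identify some $s_v$ with $t_{vw}$, and the $2$-face equations interrelate the $t_\nu$ along each compact $2$-face), so the ``coefficient of $t_\nu$'' is not automatically canonical on the level of generators. I would therefore have to verify that the residual $s$-contribution $\sum_j b_j\{\eta(c_j)\}s_{v(c_j)}-\{\eta(c)\}s_{v(c)}$ in the expansion of $\sum_j b_j\te(c_j)-\te(c)$ is absorbed through the defining relations in a way that leaves the $t_\nu$-comparison intact. This bookkeeping relies on the sign condition $\mu^j_i(c)\lan c_j,d^i\ran\le 0$ from Definition~\ref{def path} together with the constraint $\bo(\bfk)=\sum_j b_j[c_j,\eta_\ZZ(c_j)]\in\partial_T S$, which forces $v(c)$ and those $v(c_j)$ with $b_j>0$ to be matched in a controlled way along the chosen paths.
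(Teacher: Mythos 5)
Your first step is sound: from $\bo(\bfk)=\sum_j b_j[c_j,\eta_\ZZ(c_j)]$, Lemma~\ref{lem pla lem} and $\ker\pi_T=0$ (Corollary~\ref{cor ker not 0}) you correctly obtain $\sum_j b_j\te(c_j)=\te(c)$ in $\cT^*_\ZZ(\kP)$, and there is no circularity in invoking that corollary here. The gap is exactly the one you flag at the end, and it is not bookkeeping that can be ``absorbed'': in $\cT^*(\kP)$ the generators $s_v,t_\nu$ satisfy nontrivial linear relations --- the duals of the $2$-face equations~\eqref{2 face equation}, namely $\sum_{d^\nu\in\eps}\delta_\eps(d^\nu)\lan c',d^\nu\ran t_\nu=0$ for every compact $2$-face, in addition to the identifications $s_i=0$, $s_i=s_j$, $s_i=t_{ij}$ --- so an equality of elements of $\cT^*_\ZZ(\kP)$ determines the coefficient vector only modulo this relation space. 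The lemma, however, asserts an exact integer identity for each individual edge $d^\nu$. Your argument can therefore at best prove the identity up to an element of the relation space, which is strictly weaker whenever $\kP$ has a compact $2$-face, since the relations mix the $t_\nu$ of its boundary edges.

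The missing ingredient is the direct argument the paper uses, which also makes the detour through $\ktT$ unnecessary. Writing $\lam^j_\nu(c)=\lam_\nu(c)+\mu^j_\nu(c)$ and using $c=\sum_j b_jc_j$ reduces the claim to $\sum_j b_j\,\mu^j_\nu(c)\lan c_j,d^\nu\ran=0$ for each fixed $\nu$. Every term (over all $j$ and $\nu$) is $\le 0$ by the sign condition in Definition~\ref{def path}, so it suffices to show that for each $j$ with $b_j\ne 0$ the telescoping sum $\sum_\nu\mu^j_\nu(c)\lan c_j,d^\nu\ran=\lan c_j,\,v(c_j)-v(c)\ran$ vanishes. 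This is where the geometry enters: the decomposition of $\bo(\bfk)$ forces, for $b_j\ne 0$, the face of $\kP$ on which $c$ attains its minimum to be contained in the face on which $c_j$ does, so $v(c)$ also minimizes $c_j$ and the telescoping sum is zero, hence every single term $b_j\,\mu^j_\nu(c)\lan c_j,d^\nu\ran$ vanishes. This face containment is precisely the ``controlled matching'' you defer to at the end; it is the actual content of the lemma, and once you have it the identity follows immediately without any coefficient comparison in $\cT^*_\ZZ(\kP)$.
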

\begin{proof}
 Let $F$ be the face of $P$ where $c$ attains its minimum and let $F_j$ be the face of $P$ where $c_j$ attains its minimum. Then $b_j\ne 0$ only for those $j$ such that $F_j\subset F$, from which the proof easily follows.
\end{proof}

The following description of $\tla(\bfk)$ will be important in Section \ref{sec loop}.
\begin{proposition}\label{prop tildelam}
For $\bfk=(k_1,\dots,k_r)$, $\bo(\bfk)=(b_1,\dots,b_r)$ and $c=\sum_{i=1}^rk_ic_i$ it holds that 
\[
  \textstyle
  \tla(\bfk)=\sum_j(k_j-b_j)\Big(\sum_\nu\delta_{j,\nu}(c)\kLp\big(\lam^j_\nu(c)c_j,d^\nu\big)+\{\lan c_j,v_{*}\ran\}s_{v_{*}}\Big)\in \ktT,
\]
 where 
 \[
   \delta_{j,\nu}(c):=\left\{\begin{array}{ll}
1& \text{ if }\lan c_j,\lam^j_\nu(c) d^\nu\ran>0\\
0&\text{ if }\lan c_j,\lam^j_\nu(c) d^\nu\ran \leq0.
 \end{array}
 \right.
 \]
 \end{proposition}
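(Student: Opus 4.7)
\textit{Step 1: reduction.} The key identity is $\te(c)=\sum_{j=1}^r b_j\,\te(c_j)$ in $\ktS$: both sides map to $[c,\eta_\ZZ(c)]$ under $\pi_S$ by \eqref{eq boundary}, so their difference lies in $\ktT$; applying $\pi_T$ to that difference gives $\eta_\ZZ(c)-\sum_j b_j\eta_\ZZ(c_j)=0$, and by Corollary~\ref{cor ker not 0} the difference itself vanishes. Combined with Lemma~\ref{lem pla lem} this gives
\[
\tla(\bfk)=\sum_{j=1}^r k_j\,\te(c_j)-\te(c)=\sum_{j=1}^r(k_j-b_j)\,\te(c_j)\in\ktT,
\]
the containment in $\ktT$ being guaranteed by $\sum_j(k_j-b_j)c_j=c-c=0$.

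\textit{Step 2: expansion along $\un{\lam}^j(c)$.} For every $j$ I unfold $\te(c_j)=\eta_\ZZ(c_j)\,s_{v(c_j)}+\sum_\alpha L_{\alpha-1,\alpha}(c_j)$ along the walk $\un{\lam}^j(c)=\un{\lam}(c)+\un{\mu}^j(c)$ from $v_*$ through $v(c)$ to $v(c_j)$. By Remark~\ref{rem 53} each edge contribution $L_{\alpha-1,\alpha}(c_j)$ trades for $-\delta_\alpha(c_j)\,\kLp(c_j,\vec e_\alpha)$ plus the boundary pieces $\lceil\lan c_j,v^\alpha\ran\rceil s_{v^\alpha}-\lceil\lan c_j,v^{\alpha-1}\ran\rceil s_{v^{\alpha-1}}$; along the walk these ceiling contributions telescope to endpoint terms at $v_*$ and $v(c_j)$, which combine with $\eta_\ZZ(c_j)s_{v(c_j)}$ exactly as in Lemma~\ref{lem connec t path}.

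\textit{Step 3: edge consolidation and weighted sum.} For each fixed edge $d^\nu$ I then consolidate the $\kLp(c_j,\vec e_\alpha)$'s coming from all traversals of $d^\nu$ in $\un{\lam}^j(c)$ into a single $\delta_{j,\nu}(c)\,\kLp(\lam^j_\nu(c)c_j,d^\nu)$. The sign encoded by $\delta_{j,\nu}(c)$ is precisely what makes the $t_\nu$-coefficient equal the required $\lam^j_\nu(c)\lan c_j,d^\nu\ran$, while the discrepancy $\{\lam^j_\nu(c)\lan c_j,v\ran\}-\lam^j_\nu(c)\{\lan c_j,v\ran\}\in\ZZ$ (controlled by iterating Lemma~\ref{lem trivial}) shows up as integer corrections attached to the $s_{v^i}$ at the endpoints of $d^\nu$. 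Finally I sum over $j$ with weights $(k_j-b_j)$ and apply the linear identity $\sum_j(k_j-b_j)\lan c_j,v\ran=0$ at every vertex $v\in\Vrtx(\kP)$, which is a direct consequence of $\sum_j(k_j-b_j)c_j=0$. At $v=v_*$ this converts $-\sum_j(k_j-b_j)\lceil\lan c_j,v_*\ran\rceil s_{v_*}$ into $\sum_j(k_j-b_j)\{\lan c_j,v_*\ran\}s_{v_*}$, which is exactly the second summand of the claimed formula, and at the remaining vertices it forces the integer corrections produced during consolidation to cancel.

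\textit{Main obstacle.} The bookkeeping in Step~3 is the heart of the argument. When $d^\nu$ is traversed in both directions in $\un{\lam}^j(c)$, the signed count $\lam^j_\nu(c)$ can even vanish while many individual $\kLp(c_j,\vec e_\alpha)$ are still present on the walk, so the fractional-part corrections do not cancel for an individual $j$: they only cancel globally, after the weighted sum over $j$ is combined with the vertex-wise linear relations $\sum_j(k_j-b_j)\lan c_j,v\ran=0$. Making that global cancellation explicit, in parallel with matching the sign of each $\delta_{j,\nu}(c)\kLp(\lam^j_\nu(c)c_j,d^\nu)$ against the non-negative-$t$-coefficient normalisation of $\kLp$, is what makes the proof non-trivial.
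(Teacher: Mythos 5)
Your proof follows essentially the same route as the paper's: first reduce $\tla(\bfk)$ to the weighted sum $\sum_j(k_j-b_j)\cdot\big(\text{expansion of }\te(c_j)\text{ along }\un{\lam}^j(c)\big)$, then convert the edge contributions into the elements $\kLp(\cdot,d^\nu)$. Two points of comparison. Your Step~1 is cleaner than the paper's: the paper reaches the $(k_j-b_j)$-weighted form by expanding both $\sum_jk_j\te(c_j)$ and $\te(c)$ along paths and then invoking \eqref{eq boundary} together with Lemma~\ref{lem pom gen 1}, whereas you obtain the single identity $\te(c)=\sum_jb_j\te(c_j)$ in one stroke from Corollary~\ref{cor ker not 0} (the same device used in Lemma~\ref{cor-HilbBasisSuff}), bypassing Lemma~\ref{lem pom gen 1} altogether. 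On the other hand, your Step~3 deviates from the paper in a way that weakens the argument: the paper performs the conversion \emph{per $j$}, asserting via Lemma~\ref{lem connec t path} the identity $\sum_\nu\delta_{j,\nu}(c)\,\kLp\big(\lam^j_\nu(c)c_j,d^\nu\big)=\big(\ketaZ(c_j)-\keta(c_j)\big)s_{v(c_j)}-\{\lan c_j,v_*\ran\}s_{v_*}+\sum_\nu\lam^j_\nu(c)\lan d^\nu,c_j\ran t_\nu$, so no cancellation across different $j$ is ever invoked. You instead claim that the fractional-part discrepancies ``only cancel globally'' after summing over $j$, and you justify this by the vertex-wise relations $\sum_j(k_j-b_j)\lan c_j,v\ran=0$; but those relations are linear in the $c_j$, while the discrepancies of the form $\{\lam\lan c_j,v\ran\}-\lam\{\lan c_j,v\ran\}$ are not, so the asserted global cancellation does not follow from them and is left unverified. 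This is the one genuine gap in the write-up; it disappears entirely if you instead establish the per-$j$ identity directly from Lemma~\ref{lem connec t path}, which is how the paper finishes.
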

\begin{proof}
 In the definition of $\tla(\bfk)$ let us pick the path $\un{\lam}^j(c)$ from $v_*$ to $v(c_j)$.
 We compute
 \begin{eqnarray*}
   \tla(\bfk) & = & \textstyle
                 \sum_jk_j\big(\ketaZ(c_j)-\keta(c_j)\big)\cdot s_{v(c_j)}-\big(\ketaZ(c)-\keta(c)\big)\cdot s_{v(c)}+ \\
              &   & \textstyle
                +\sum_\nu\Big(\sum_jk_j\lam^j_\nu(c)\lan c_j,d^\nu\ran-\lam_\nu(c)\lan c,d^\nu \ran\Big)t_\nu          \\
              & = & \textstyle
                 \sum_j(k_j-b_j)\Big(\big(\ketaZ(c_j)-\keta(c_j)\big)\cdot s_{v(c_j)}+\sum_\nu\lam_\nu^j(c)\lan c_j,d^\nu\ran t_\nu\Big),
 \end{eqnarray*}
 where in the last equality we used the equation \eqref{eq boundary} and Lemma \ref{lem pom gen 1}. 
By Lemma \ref{lem connec t path} we see that 
\[
  \textstyle
  \sum_\nu\delta_{j,\nu}\kLp(\lam^j_\nu(c) c_j, d^\nu)=\big(\ketaZ(c_j)-\keta(c_j)\big)\cdot s_{v(c_j)}-\{\lan c_j,v_{*}\ran\}s_{v_{*}}+\sum_\nu\lam_\nu^j(c)\lan d^\nu,c_j\ran t_\nu,
\]
from which we conclude the proof.
\end{proof}
\section{The deformation diagram}\label{sec vers}
\subsection{The free pair $(\ktT,\ktS)$ yields a deformation of a hyperplane section}
\label{defHyperplaneSections}

The injection 
$\kT \hookrightarrow \kS$ 
yields a morphism $R:X=\spec k[\kS] \to\A^1_k$.
Its zero-fiber $Z:=R^{-1}(0)\subseteq X$ equals
$\Spec k[\Rand{\kT}{\kS}]$ where the definition of the
$k$-vector space $k[\Rand{\kT}{\kS}]$ is straightforward, and it becomes a
$k$-algebra via the multiplication law saying that for
$s,s'\in\Rand{\kT}{\kS}$ we set
\[
\chi^s\cdot\chi^{s'}:=\left\{
\begin{array}{ll}
\chi^{s+s'} & \mbox{if } s+s'\in\Rand{\kT}{\kS}, \mbox{ i.e.~if }
\height(s+s')=0
\\[0.5ex]
0 & \mbox{if } \height(s+s')>0.
\end{array}
\right.
\]

\begin{example}
\label{ex-discreteC}
Let us consider Example \ref{ex klaus ex}. 
Here, the equations for $Z\subseteq\A^4_k$ are
$z_i\,z_{-j}=0$ ($i,j=1,2$) and $z_1^2=z_{-1}^2=0$, where $z_i$ is the coordinate corresponding to the Hillbert basis element $(i,1)$ in \eqref{eq hilb bas el ex}. Hence, $Z$ is the union
of two orthogonal double lines.
\end{example}

We have the commutative diagram
\[
  \begin{tikzcd}
    \big[Z=R^{-1}(0)\big] \arrow[r,hook] \arrow[d]
    & 
    X \arrow[r,hook] \arrow[d]{}{R}
    & 
    \Spec k[\ktS] \arrow[d]{}{\wt{R}}
    \\
    0 \arrow[hook,r]
    &
    \A^1_k \arrow[hook,r]
    &
    \Spec k[\ktT].    
  \end{tikzcd}
\]

By Proposition \ref{prop-straightFlat},
all vertical maps are flat, and both squares are Cartesian diagrams.
That is, both $R:X\to\A^1_k$ and
$\wt{R}:\Spec k[\ktS]\to\Spec k[\ktT]$ are deformations of 
$Z=R^{-1}(0)$. 

\subsection{Deformations of $X$ instead of $Z$}\label{sectZX}

From now on we assume that $\ktT$ is generated by degree $1$ elements.
Lemma \ref{lem lepa} below offers  a geometric interpretation of this condition.

There is an alternative possibility to produce a deformation diagram out of
the right hand square of the  diagram in Subsection \ref{defHyperplaneSections}:
\begin{equation}
  \label{eq diag 2}
  \begin{tikzcd}
    Z \arrow[d] \arrow[hook,r]
    &
    X \arrow[hook,r] \ar[d]{}{R}
    & 
    \tX \arrow[hook,r] \arrow[d]{}{\wt{R}}
    &
    \Spec k[\ktS]\arrow[d]{}{\wt{R}}
    \\
    0 \arrow[hook,r]
    &
    \A^1_k \arrow[hook,r] \arrow[d]
    & 
    \baseMfullPre \arrow[hook,r] \arrow[d]{}{\ell}
    & 
    \Spec k[\ktT] \arrow[hook,r] \arrow[Rightarrow,dl,"\mbox{\tiny maximal}"]
    & 
    \A^{\gens+1}_k\ar[d,"\ell"]
    \\
    &
    0 \arrow[hook,r]
    & 
    \baseMkick \arrow[hook,rr]
    &&
    \A^{\gens+1}_k/\Delta.
  \end{tikzcd}
\end{equation}
We described the most important part of the diagram \eqref{eq diag 2} already in Introduction, see diagram \eqref{eq imp imp dia}.
The double arrow between $\spec k[\ktT]$ and $\baseMkick$ is supposed to
indicate that there is a maximal closed 
subscheme $\baseMkick\subseteq\A^\gens_k$ meeting the requirement
$\ell^{-1}(\baseMkick)\subseteq \spec k[\ktT]$. One obtains the ideal 
providing this distinguished maximal $\baseMkick$ as follows: 
write all (binomial) equations $f(u_0,\ldots,u_\gens)$
from the ideal of $\spec k[\ktT]\subseteq\A^{\gens+1}_k$ 
in coordinates
$u_0, T_1,\ldots,T_\gens$ with 
$T_i:=u_0-u_i$ ($i=1,\ldots,\gens$),
such as
\[
\textstyle
f(u_0,\dots,u_g)
=\sum_{l\geq 0} \,f_l(T_1,\ldots,T_\gens)\cdot u_0^l.
\]
Then by definition, the ideal of $\baseMkick$ is generated by the coefficients
$f_l(T_1,\ldots,T_\gens)\in k[T_1,\dots,T_g]$.

Recall the sub-monoids $\ktT_d\subset \ktT$ from Definition \ref{def submon}. 
The main result of this paper is the following.
\begin{theorem}
\label{th main}
Let $X$ be a toric variety from our setup in Section \ref{sec setup}. Assume that
 $\ktT_d$ is generated by degree $1$ elements for all compact edges $d$ of $P$.
Then the maximal $\baseMkick\subseteq\A_k^\gens$ with $\ell^{-1}(\baseMkick)\subseteq
\spec k[\ktT]$ yields a maximal deformation
with prescribed tangent space $T^1_X(-R)\subseteq T^1_X$.
\end{theorem}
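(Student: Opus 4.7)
The plan is to verify three properties of the family $\tX \to \baseMkick$: (a) it is flat with central fiber $X$; (b) the Kodaira-Spencer map identifies $T_0 \baseMkick$ with $T^1_X(-R)$; (c) no proper enlargement $\baseMkick' \supsetneq \baseMkick$ inside the same ambient $\A^g_k$ carries a flat extension.

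For (a), I would combine Corollary \ref{cor free par} (the pair $(\ktT, \ktS)$ is free) with Proposition \ref{prop-straightFlat}: the resulting freeness of $k[\ktS]$ over $k[\ktT]$ makes $\wt{R}: \Spec k[\ktS] \to \Spec k[\ktT]$ flat. Base-changing along $\baseMfullPre \hookrightarrow \Spec k[\ktT]$ yields flatness of $\tX \to \baseMfullPre$; the morphism $\ell: \baseMfullPre \to \baseMkick$ is the restriction of the trivial $\A^1_k$-bundle $\A^{g+1}_k \to \A^{g+1}_k/\Delta$ to $\baseMfullPre = \ell^{-1}(\baseMkick)$ and hence flat; composition gives the flatness of $\tX \to \baseMkick$. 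The fiber over $0 \in \baseMkick$ is $\tX \times_\baseMfullPre \A^1_k = X$ by the right Cartesian square in Subsection \ref{defHyperplaneSections}.

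For (b), the hypothesis together with Proposition \ref{prop generators t} forces $\ktT$ itself to be generated in degree 1, by elements $t_0, \ldots, t_g$. Using ambient coordinates $u_0, \ldots, u_g$ and $T_i := u_0 - u_i$ on $\A^{g+1}_k/\Delta$, any binomial $u^\bfa - u^\bfb \in I(\Spec k[\ktT])$ with $|\bfa| = |\bfb| = k$ expands as
\[
u^\bfa - u^\bfb = u_0^{k-1}\Bigl(\sum_{i=1}^g (b_i - a_i)\,T_i\Bigr) + O(T^2),
\]
so the ideal of $\baseMkick$ is cut out by linear forms $\sum (b_i - a_i) T_i$ (one per lattice relation among the $t_i$ modulo the diagonal) together with purely nonlinear corrections. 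These linear forms have the correct codimension to realize $\dim T_0\baseMkick = \dim \kMT(P) - 1 = \dim T^1_X(-R)$, and the Kodaira-Spencer map of the family provides the identification with $T^1_X(-R) \subseteq T^1_X$ via Proposition \ref{prop-compTOne}; that the image lies in the degree $-R$ component is a consequence of the torus-equivariant structure on $(\ktT, \ktS)$.

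For (c), the plan is to use the universal extension property of $(\ktT, \ktS)$ from \cite{a}: any flat deformation $\tX' \to \baseMkick'$ of $X$ with $\baseMkick' \subseteq \A^g_k$ extending $\tX \to \baseMkick$ arises from a monoid extension factoring through $(\ktT, \ktS)$, yielding a morphism $\baseMkick' \to \Spec k[\ktT]$ covering the inclusion into $\A^g_k$. Equivalently $\ell^{-1}(\baseMkick') \subseteq \Spec k[\ktT]$, and the defining maximality of $\baseMkick$ then forces $\baseMkick' \subseteq \baseMkick$. The main obstacle here is precisely matching general deformations of $X$ with monoid extensions of $(T,S)$: the hypothesis that each $\ktT_d$ is degree-1 generated enters because it ensures that the lifted syzygies $\wtR_{\bfa,\bfk}$ of Definition \ref{d:liftingSyzygies}, controlled by the explicit formula for $\tla(\bfk)$ in Proposition \ref{prop tildelam}, are compatible with the linear structure of the ambient $\A^g_k$. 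I expect the remaining sections to carry out this matching by an inductive obstruction calculus, solving the lifting problem order by order, with the combinatorics of paths on $\kP$ developed in Section \ref{sec sem ktt} providing the bookkeeping.
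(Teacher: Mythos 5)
Your parts (a) and (b) are fine and match the paper's setup (flatness comes from the freeness of the pair $(\ktT,\ktS)$ via Proposition \ref{prop-straightFlat}, and the tangent space identification is Proposition \ref{prop-compTOne}). But part (c), which is the entire content of the theorem, contains a genuine gap. You appeal to the universal extension property of $(\ktT,\ktS)$ from \cite{a} to claim that any flat extension of the family arises from a monoid extension factoring through $(\ktT,\ktS)$. That universality holds only in the category of coCartesian \emph{monoid} extensions of the pair $(\kT,\kS)$; an arbitrary flat deformation of $X$ over an Artinian thickening of $\baseMkick$ is not a monoid extension, so it does not automatically factor through $\Spec k[\ktT]$. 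Bridging exactly this gap is what Sections \ref{sec loop}--\ref{sec-clusterCoord} of the paper do, and your proposal defers it to an ``inductive obstruction calculus'' that you ``expect'' to work --- that is the missing proof, not a routine verification.

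Concretely, the paper reduces maximality to injectivity of the obstruction map \eqref{eq obs map} (via \cite[Section 4]{jong}), computes its adjoint explicitly on the lifted syzygies $\wtR_{\bfa,\bfk}$ using the formula for $\tla(\bfk)$ (Proposition \ref{prop des obs map}), and then shows surjectivity onto $W$ in three steps: the image contains all loop equations (Proposition \ref{prop image psi}, Corollary \ref{cor loop cor}), it contains all local equations attached to a single edge (Proposition \ref{pro image h g}), and loop plus local relations generate all relations in $\ktT$ (the cluster-coordinate argument of Proposition \ref{prop-ttildeRel}). You also misplace where the hypothesis on $\ktT_d$ enters: it is not about compatibility of the lifted syzygies with the linear structure of $\A^\gens_k$, but is used (i) to identify each $\ktT_{ij}$ with the cone over a trapezoid so that the local equations can be written down and hit by $o^*$, and (ii) in the isomorphism $\span_k(E^{nR}_{a^i}\cap E^{nR}_{a^j})\cong k^d$ of \eqref{eq lep eq}, which requires all edges to be $1$-short; Remark \ref{rem finrem} shows the weaker assumption that $\ktT$ itself is degree-$1$ generated is not enough. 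Without these arguments your proposal does not establish maximality.
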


\begin{remark}\label{rem cod 2 smooth}
Theorem \ref{th main} 
has been 
shown in \cite{alt} and \cite{budapest} for the special case of $X$ 
lacking singularities in codimension two. 
In the combinatorial language of polytopes this means that all two faces $\lan a^i,a^j\ran$ are smooth, i.e. $a^i$ and $a^j$ are the base of $\lan a^i,a^j\ran\cap N$.
Lemma \ref{lem lepa} will show to what extent Theorem~\ref{th main} is a generalization of this case.

\end{remark}

Clearly the assumption that $\ktT_d$ is generated by degree $1$ elements for all $d$ implies that $\ktT$ is generated by degree $1$ elements by the description of the generators of $\ktT$ in Proposition \ref{prop generators t}. 

\begin{remark}
Note that from Proposition \ref{prop-compTOne} we see that
$\baseMkick$ has indeed $T^1_X(-R)$ as its tangent space.  Moreover, the assumption $\ktT_d$ is generated by degree $1$ elements for all $d$ implies that all edges are $1$-short by Proposition \ref{prop-degtt-kshort}. This implies that $T^1_X(-kR)=0$ for $k\geq 2$ by Proposition \ref{prop-compTOne}.
\end{remark}

\begin{example}\label{ex house}
Let 
$P=\conv\{(0,0),(2,0),(2,1),(1,2),(0,1)\}\subset \RR^2$ be the lattice polygon: 
\[
\begin{tikzpicture}[scale=0.7]
\draw[thick,  color=black]  
  (0,0) -- (2,0) -- (2,1) -- (1,2) -- (0,1) -- cycle;
\fill[thick,  color=black]
  (0,0) circle (2.5pt) (1,0) circle (2.5pt) (2,0) circle (2.5pt) (2,1) circle (2.5pt)
  (1,2) circle (2.5pt) (0,1) circle (2.5pt);
\draw[thick,  color=black]
  (1,-0.5) node{$d_1$} (2.5,0.5) node{$d_2$} (1.9,1.9) node{$d_3$}
  (0.1,1.9) node{$d_4$} (-0.5,0.5) node{$d_5$};
\end{tikzpicture}
\]
The generators of $\ktT$ are $2t_1,~t_2,~t_3,~t_4,~t_5$ (they  correspond to the five edges). The closing condition~\eqref{2 face equation} on $2$-faces gives us that $2t_1=t_3+t_4$, which implies that $\ktT$ is generated by degree $1$ elements but on the other hand we see that $\ktT_{d_1}\subset \ktT$ is not generated by degree $1$ elements (it is generated by $2t_1$). 
\end{example}

\begin{remark}
Example \ref{ex house} shows that the condition that $\ktT_d$ are generated by degree $1$ elements is slightly stronger than the condition that $\ktT$ is generated by degree $1$ elements. See also Remark \ref{rem finrem} why we impose this stronger condition.
\end{remark}

\begin{lemma}\label{lem lepa}
For a compact edge $d=[v,w]$ of $P$
assume that one of the following holds:
\begin{enumerate}
\item[a)] $d$ is a short edge;
\item[b)] one vertex of $d$ lies in $N$ and the lattice length of $d$ is strictly smaller than $2$;
\item[c)] $g_d\geq 2$ and $d$ is $1$-short;
\item[d)] $g_d=1$ and d is $1$-short and the lattice length of $d$ is bigger than $1$;
\item[e)] there is an isomorphism of the lattice $N$ that maps the edge $d$ to the edge with vertices $(-\frac{1}{n},0,\dots,0)$ and $(\frac{1}{m},0,\dots,0)$, $n,m\in \NN$ and $n,m\ne 1$.
\end{enumerate}
Then $\ktT_d$ is generated by degree $1$ elements.
\end{lemma}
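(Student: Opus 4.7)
The proof is case-by-case. By Proposition~\ref{prop generators t} and Proposition~\ref{pro 2dim fin gen}, the monoid $\ktT_d$ is finitely generated by $s_v$, $s_w$ together with finitely many elements of the form $\kLp(c,d)$. Hence it suffices, in each of the situations (a)--(e), to express every such generator as an $\NN$-combination of degree-one elements of $\ktT_d$. The five cases split into two families: cases (a) and (c), in which $\cT^*(\kP)$ undergoes a structural collapse via the identifications of Definition~\ref{def-TP}, and cases (b), (d), (e), in which one instead produces degree-one elements directly from Definition~\ref{def 53} combined with the hypothesis on $\ell(d)$.

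\textbf{Cases (a) and (c).} In case~(a), both half-open edges $[v,w)$ and $[w,v)$ are short, so Definition~\ref{def-TP} yields the identifications $s_v=t_d=s_w=:s$. The formula of Definition~\ref{def 53} then shows that each $\kLp(c,d)$ is equal to $\pi(\kLp(c,d))\cdot s$, so $\ktT_d=\NN\cdot s$ and is generated in degree~$1$. In case~(c), the condition $g_d\geq 2$ forces $[v,w]\cap \kQuotD=\emptyset$, hence $s_v=s_w=:s$, a degree-one element. Proposition~\ref{prop-degtt-kshort} with $k=1$ supplies some $c_0\in M$ with $\deg\kLp(c_0,d)=1$, and a direct computation using $s_v=s_w$ shows $\kLp(kc_0,d)=k\,\kLp(c_0,d)+\nu_k\cdot s$ with $\nu_k\in\NN$ for every integer $k$; hence $\{s,\kLp(c_0,d)\}$ generates $\ktT_d$ in degree one.

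\textbf{Cases (b), (d), (e).} Case~(b) splits into sub-cases according to whether $w\in \kQuotD$ and according to the size of $\ell(d)$. If $w\in \kQuotD$, then $d$ is a primitive lattice segment, $s_v=s_w=0$, and $\ktT_d=\NN\cdot t_d$ with $t_d=\kLp(c_0,d)$ for any $c_0$ pairing to one with $d$. If $w\notin \kQuotD$ and $\ell(d)<1$, then $(v,w]$ is short, so $s_w=t_d$ and $\ktT_d=\NN\cdot s_w$. The remaining sub-case $w\notin \kQuotD$, $\ell(d)\in[1,2)$ is the key one: choosing $c_0\in M$ with $\lan c_0,e\ran=-1$, where $e$ is the primitive direction of $d$, gives $\deg\kLp(c_0,d)=\lfloor\ell(d)\rfloor=1$, and a short ceiling calculation yields $\kLp(kc_0,d)=|k|\kLp(c_0,d)+(\lfloor |k|\ell\rfloor-|k|)\,s_w$ for $k>0$, with an analogous formula for $k<0$. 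Case~(d) is handled in the same spirit: the hypotheses $g_d=1$, $d$ is $1$-short, and $\ell>1$ force $v,w\notin \kQuotD$ and $\ell\in(1,2)$ with exactly one interior lattice point, and a direct evaluation of the fractional-part coefficients shows that both $\kLp(1,d)$ and $\kLp(-1,d)$ have degree~$1$; these together with $s_v,s_w$ then $\NN$-span every other $\kLp(c,d)$. Case~(e) is the most explicit: for the normalized edge with vertices $(-1/n,\un{0})$ and $(1/m,\un{0})$, both $\kLp(\pm 1,d)$ have degree exactly~$1$ by the telescoping $\frac{n+m}{nm}+\frac{m-1}{m}-\frac{1}{n}=1$, and one proceeds as in case~(d).

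\textbf{Main obstacle.} The technical crux, in each of the non-collapsing sub-cases (b) with $\ell\in[1,2)$, (d), and (e), is verifying that the small chosen set of degree-one elements actually $\NN$-spans every remaining generator $\kLp(kc_1,d)$ produced by Proposition~\ref{pro 2dim fin gen}. The correction coefficients involve expressions like $\lfloor k\ell\rfloor-k$ and $\lceil k\ell\rceil-k$, whose non-negativity must be read off from the specific length hypothesis (namely $\ell\geq 1$ in (b), $\ell\in(1,2)$ in (d), and the explicit form $\ell=\tfrac{1}{n}+\tfrac{1}{m}$ in (e)). These identities are elementary but require careful bookkeeping with the fractional-part convention $\{x\}=\lceil x\rceil-x$ used throughout the paper.
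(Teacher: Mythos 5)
Your overall strategy --- reduce to the finite generating set of Proposition~\ref{pro 2dim fin gen} and decompose each $\kLp(kc_0,d)$ into degree-one elements case by case --- is exactly the paper's, and your treatments of (a), (b) and (d) match the paper's proof (you merely make explicit the ceiling/floor bookkeeping that the paper dismisses as ``easily verified''). There are, however, two genuine gaps.

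Case (e) is where your argument breaks. You propose to ``proceed as in case (d)'', i.e.\ to generate $\ktT_d$ by $s_v$, $s_w$, $\kLp(1,d)$, $\kLp(-1,d)$ and absorb the remaining $\kLp(k,d)$ via corrections such as $(\lceil k\ell\rceil-k)\,s$. But here $\ell=\tfrac1n+\tfrac1m\leq 1$, so these corrections are non-positive; worse, $\deg\kLp(k,d)$ is approximately $k\ell<k$ (and equals $k\ell$ for $k=\operatorname{lcm}(n,m)$), so $\kLp(k,d)$ cannot possibly be $k$ copies of a degree-one element plus a non-negative remainder. Concretely, for $d=[-\tfrac12,\tfrac13]$ (the paper's Example~\ref{13 example}) the element $\kLp(-2,d)=\tfrac53t-\tfrac23 s_w$ has degree $1$ but is \emph{not} an $\NN$-combination of $s_v$, $s_w$, $\kLp(\pm1,d)$: matching the $t$-coefficient forces exactly two copies of $\kLp(\pm 1,d)$, and each of the three choices leaves a negative $s$-coefficient. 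It is a new minimal generator. The paper instead exhibits an explicit isomorphism of $\ktT_d$ with $\ZZ^3\cap C(Q)$ for the lattice quadrilateral $Q=\conv\{(0,0),(0,1),(n-1,0),(m-1,1)\}$, so that degree-one generation follows because all lattice points of $Q$ sit in degree one; an argument of this kind (accounting for the roughly $n+m$ minimal generators) is genuinely needed.

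A smaller but real gap occurs in case (c): the identity $\kLp(kc_0,d)=k\,\kLp(c_0,d)+\nu_k s$ with $\nu_k\in\NN$ is false in general. For $d=\big[(-\tfrac16,\tfrac12),(\tfrac23,\tfrac12)\big]$ (Example~\ref{ex-CQS}, item 2, which satisfies hypothesis (c)) one computes $\kLp(3c_0,d)=\tfrac52t-\tfrac12 s=3\kLp(c_0,d)-s$, i.e.\ $\nu_3=-1$, and $\kLp(-2c_0,d)=\tfrac53t-\tfrac23 s$ is again an independent degree-one minimal generator not lying in $\NN\cdot s+\NN\cdot\kLp(\pm c_0,d)$. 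The conclusion of (c) survives only because these extra minimal generators also happen to have degree one --- a fact your decomposition does not establish and which requires an additional argument (here, too, the structure of $\ktT_d$ as the lattice points of a two-dimensional cone is the cleanest route).
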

\begin{proof} 
  Let $c'\in M$ be such that
  \[
    \braket{c',d}=\min\{ |\braket{\kc,d}| \kst \kc\in \kQuot, \braket{\kc,d}\ne0\}.
  \] 
  Note that this value appeared in the proof of Proposition \ref{pro 2dim fin gen}, where we described the generators of $\ktT$.
  
  If a) holds the claim trivially follows.
  
If b) holds we may assume that $w\in N$.
Then we can easily see that the semigroup $\ktT_d$ is generated by two elements, namely $s_v$ and $\kLp(c',d)$. Since the lattice length of $d$ is strictly smaller than $2$ we see that $\kLp(c',d)$ has degree $1$ by Proposition \ref{prop-degtt-kshort}, from which the claim follows.

If c) holds, then $s_v=s_w$ and as in b) we can easily verify that $\ktT_d$ is generated by two elements, namely $s_v=s_w$ and $\kLp(c',d)$. By Proposition \ref{prop-degtt-kshort} we see that the degree of $\kLp(c',d)$ is $1$.

 If d) holds, then we can easily check that $\ktT_d$ is generated by four elements: $s_v$, $s_w$, $\kLp(c',d)$, $\kLp(-c',d)$, which have degree $1$ by Proposition \ref{prop-degtt-kshort}.
 
Let us now assume that only e) holds. It is enough to show that for $d=[v,w]=[-\frac{1}{n},\frac{1}{m}]\subset \RR$ the semigroup $S'=\span_\NN\{s_v,s_w,\kLp(c,d) \kst c\in M\}$  is generated by degree $1$ elements. This is clear by explicit description of the generators described in Proposition \ref{pro 2dim fin gen}: 
let us first consider the case when $m,n\ne 1$. In this case $S'$ is isomorphic to the following monoid (see also Example \ref{13 example} for a geometric picture): 
let the polytope $Q$ be the convex hull of the vertices $(0,0), (0,1), (n-1,0), (m-1,1)$. Let $C(Q)$ be the cone over this polytope, i.e. generated by $(0,0,1), (0,1,1), (n-1,0,1), (m-1,1,1)$. We will show that the monoid  $\ZZ^3\cap C(Q)$ is isomorphic to $S'$. Indeed, the isomorphism is given by 
\[
  s_v\mapsto (0,0,1),~~~~s_{w}\mapsto (0,1,1),~~~~\kLp(a,d)\mapsto (a,1,1),~~~~\kLp(-b,d)\mapsto (b,0,1),
\]
for $a=1,\dots,n-1$ and $b=1,\dots,m-1$.  
From this we conclude the proof.  
\end{proof}

\begin{remark}
  The compact edges of $P$ correspond to the two dimensional cyclic quotient singularities.
  Thus Lemma~\ref{lem lepa} can be phrased using this language as well, see  \cite[Section 2]{alt-kol}.
\end{remark}

\begin{example}
Let $d=[-\frac{2}{3},\frac{1}{4}]\subset \RR$. Then $\ktT_d$ is generated by degree $1$ elements, namely by 
$$s_1,~~~s_2,~~~,\kLp(1,d)=\frac{11}{12}t+\frac{3}{4}s_2-\frac{2}{3}s_1,~~~\kLp(-1,d)=\frac{11}{12}t+\frac{1}{3}s_1-\frac{1}{4}s_2.$$
Thus we see that the list in Lemma \ref{lem lepa} is not exhaustive, i.e. $\ktT_d$ is generated by degree $1$ elements but it does not appear on the list.
\end{example}

\begin{example}
Let us consider $P=[v_1,v_2]=[-\frac{3}{5},\frac{1}{5}]\subset \RR$. Here we have only one edge, which is $1$-short and we will show that $\ktT$ is not generated by degree $1$ elements.
The degree $1$ elements are 
\begin{equation}\label{eq deg 1 elt}
s_1,~~~s_2,~~~\kLp(1,d)=\frac{4}{5}t+\frac{4}{5}s_2-\frac{3}{5}s_1,~~~\kLp(-1,d)=\frac{4}{5}t+\frac{2}{5}s_1-\frac{1}{5}s_2.
\end{equation}
We see that we can not write the element
\[
  \kLp(3,d)=\frac{12}{5}t+\frac{2}{5}s_2-\frac{4}{5}s_1
\]
as a sum of degree 1 elements in \eqref{eq deg 1 elt}, thus $\ktT$ is not generated by degree $1$ elements. We have $g_d=1$ and the lattice length of $d$ is smaller than $1$ thus none of the conditions in Lemma \ref{lem lepa} is satisfied for this example.
\end{example}

\begin{example}
Let us continue with Example \eqref{ex klaus ex}. 
Let us denote $k[\ktT]=k[u_1, u_2, u_A, u_B]$, where the variables correspond to the minimal generating set of $\ktT$, written in \eqref{eq exam eq} (here $A=\kLp(1,d)$ and $B=\kLp(-1,d)$). We only have the following binomial equation
\[
u_A\,u_1-u_B\,u_2=0.
\]
Writing $u_0=u_1$ and $T_i=u_0-u_i$ for $i=2,A,B$, 
turn this equation into
\[
(u_0-T_A)u_0-(u_0-T_B)(u_0-T_2)=0
\]
from which we get
\[
  u_0(-T_A+T_B+T_2)-T_2T_B=0.
\]
The equations of our versal base space are
\[
  T_A=T_2+T_B,~~~T_2T_B=0
\]
and thus
\[
\baseMkick=\Spec k[T_2,T_B]/(T_2T_B),
\]
i.e.\ $\baseMkick$ equals the union of two lines. These two lines correspond to the two Minkowski decompositions 
\[
  P=P_1+P_2=Q_1+Q_2,
\]
where $P_1$ is the vertex $-\frac{1}{2}$,  $P_2=[0,1]$, $Q_1=[-\frac{1}{2},0]$ and $Q_2=[0,\frac{1}{2}]$, see also \cite[Section 9]{a}, where those Minkowski decompositions were called lattice friendly Minkowski decompositions. 
\end{example}

\begin{example}\label{13 example}
Let  $\kP=[v,w]=[-\frac{1}{2},\frac{1}{3}]\subset\RR$ and thus  
 $\sigma=\span_{\RR\geqslant 0} \{(-1,2),\;(1,3)\}\subseteq\RR^2$.
Dualizing we obtain  free embedding of monoids
\[
  \begin{tikzcd}
\kT=\NN\ar[hook,r]& \span_{\RR_{\geqslant 0}} \{(-3,1),\,(2,1)\}\cap\ZZ^2=\kS.    
  \end{tikzcd}
\]
The Hilbert basis of $\kS$, i.e.\ the set of minimal generators, 
equals
\[
\{(-3,1),\, (-2,1),\, (-1,1),\, (0,1),\, (1,1),\, (2,1)\}.
\]
Since $\kP$ is free from \se\ half open edges, we obtain
$\kMT(\kP)=\RR^3$ with coordinates $(t,s_v,s_w)$. We see that
the elements
\[
  A=\kLp(1,d)=\frac{5}{6}t+\frac{2}{3}s_w-\frac{1}{2}s_v,\,\,~~~B_1=\kLp(-1,d)=\frac{5}{6}t+\frac{1}{2}s_v-\frac{1}{3}s_w,\,\,~~~ B_2=\kLp(2,d)=\frac{5}{3}t-\frac{2}{3}s_w,
\]
together with $s_v$ and $s_w$ generate $\ktT$. Thus $\ktT$
is the set of lattice points of the cone over the quadrangle
\[
\begin{tikzpicture}[scale=1.0]
\draw[thick,  color=black]  
  (0,0) -- (2,0) -- (1,1) -- (0,1) -- (0,0);
\fill[thick,  color=black]
  (0,0) circle (2.5pt) (2,0) circle (2.5pt) (1,1) circle (2.5pt)
  (0,1) circle (2.5pt) (1,0) circle (2.5pt) ;
\draw[thick,  color=black]
  (0,-0.3) node{$s_v$} (1,1.3) node{$A$} (2,-0.3) node{$B_2$}
  (1,-0.3) node{$B_1$} (0,1.3) node{$s_w$};
\end{tikzpicture}
\]
The generators obey the affine relations
\[
A+s_v=B_1+s_w
\hspace{0.8em}\mbox{and}\hspace{0.8em}
2B_1=B_2+s_v,
\]
which induce the following binomial equations:
\[
u_A\,u_v-u_{B_1}\,u_w
\hspace{0.8em}\mbox{and}\hspace{0.8em}
u_{B_1}^2-u_{B_2}\,u_v.
\]
After writing $u_0=u_{B_1}$ and $T_i=u_0-u_i$ for $i=v,w,A,B$, 
these equations turn into
\[
(u_0-T_A)(u_0-T_v)-u_0(u_0-T_w)=0,~~~
u_0^2-(u_0-T_{B_2})(u_0-T_v)=0.
\]
and thus after some computation we obtain 
\[
  u_0(T_w-T_v-T_A)+T_AT_v=0,~~~u_0(T_v+T_{B_2})-T_{B_2}T_v=0.
\]
We can write
\[
  T_w=T_v+T_A,~~~T_{B_2}=-T_v
\]
and we end up with
\[
\baseMkick=\Spec k[T_v,T_A]/(T_v^2,T_AT_v),
\]
i.e.\ $\baseMkick$ equals the line with an embedded point. This line is corresponding to the Minkowski decomposition $P=[-\frac{1}{2},0]+[0,\frac{1}{3}]$.
\end{example}

\subsection{The Obstruction map}\label{subsec obs}

From  \cite[Section 4]{jong} and \cite[Section 7]{alt} (see also \cite[Section 10]{de jong}) 
 we recall the definition of the obstruction map, which is the main tool for proving Theorem \ref{th main}. 
 As in Subsection \ref{r:projectionOfSum}, let $\cR$ be the module of linear relations between $f_\bfk$,
 which are the generators of $\cI_\kS$.
 The module $\cR$ contains the submodule $\cR_0$ of the so-called Koszul relations.  
 \begin{definition}\label{def t2}
   Let $\kS$ be the monoid defined by $\kP$ and $X=\Spec k[\kS]$. We define
   \[
     T^2_X:=\frac{\Hom(\cR/\cR_0,\,k[\kS])}{\Hom(\bigoplus_{\bfk\in \NN^r}k[x,t]f_\bfk,\,k[\kS])}.
   \]
\end{definition}

Let $k[\ktT]=k[u_0,\dots,u_g]/\cI_{\ktT}$, where $\cI_{\ktT}=(p_1,\dots,p_k)$, for some homogenous polynomials $p_i$. We will write for simplicity $\bfT$ for the list of variables $T_1,T_2,\dots,T_g$.
Every degree $d$ homogenous polynomial $p\in k[u_0,\dots,u_g]=k[u_0,\bfT]$, with $T_i=u_0-u_i$, can be uniquely written as 
\[
  \textstyle
  p=\sum_{n=1}^dp^{(n)}(\bfT)u_0^{d-n},
\]
where $p^{(n)}(\bfT)$ is homogenous of degree $n$. In Subsection \ref{sectZX} we saw that the equations of $\baseMkick$ are given by the ideal
\[
  \cJ:=\big(p_1^{(n)}(\bfT),\dots,p_k^{(n)}(\bfT)\kst n\in \NN\big).
\]
\begin{definition}\label{def i part}
We call $p^{(n)}(\bfT)$ the \emph{degree $n$ part of $p$}.
Let us consider the ideal
\[
  \widetilde{\cJ}:=\cJ\cdot (T_1,T_2,\dots,T_g)+\cJ_1k[\bfT]\subset k[\bfT],
\]
where $\cJ_1:=\big(p_1^{(1)}(\bfT),\dots,p_k^{(1)}(\bfT)\big)$ denotes the ideal generated by the degree one elements. 
Let $W:=\cJ/\widetilde{\cJ}$ be a $\ZZ$-graded vector space $W=\bigoplus_{n\geq 2}W_n$, where  $W_n$ contains the degree~$n$ parts of the polynomials $p\in \cI_{\ktT}$. 
\end{definition}

We have the exact sequence
\begin{equation}\label{eq lw}
  \begin{tikzcd}[column sep = 2em]
0\ar[r]& W\ar[r]& k[\bfT]/\widetilde{\cJ}\ar[r]& k[\bfT]/\cJ\ar[r]& 0.    
  \end{tikzcd}
\end{equation} 
Identifying $t$ with $u_0$, the tensor product of \eqref{eq lw} with $k[\bfx,u_0]$ yields 
\begin{equation}\label{eq exact}
  \begin{tikzcd}[column sep =2em]
0\ar[r]& W\otimes _kk[t,\bfx]\ar[r]& k[t,\bfT,\bfx]/\widetilde{\cJ}\cdot k[t,\bfT,\bfx]\ar[r]& k[t,\bfT,\bfx]/\cJ\cdot k[t,\bfT,\bfx]\ar[r]& 0.    
  \end{tikzcd}
\end{equation}
Using the notation from Subsection \ref{r:projectionOfSum}, let $s=\sum_{\bfk} s_{\bfk}e_{\bfk}\in \cR$, which means $s_{\bfk}\in k[\bfx,t]$ as well as $\psi(s)=0\in k[\bfx,t]$.
In Section \ref{sec flat} we showed that we can lift $s_\bfk$  to $k[u_0,\dots,u_g,\bfx]$, from which we obtain $\widetilde{s}\in \widetilde{\cR}$ such that 
\[
  \textstyle
  o(s):=\sum \widetilde{s}_{\bfk}E_{\bfk}\mapsto 0~~\text{in}~~k[t,\bfT,\bfx]/\cJ\cdot k[t,\bfT,\bfx].
\]
In particular, each relation $s\in \cR$ induces some element $o(s)\in W\otimes_k k[\bfx,t]$, which is well defined after the additional projection to $W\otimes_k k[S]$.
This procedure describes a certain element
\[
  o \in T^2_X\otimes_k W=\h(W^*,T^2_X)
\]
called the \emph{obstruction map} (note that this notation was used in 
\cite{jong} and \cite{alt}  while in \cite{de jong} the obstruction map was defined to be the dual of this). 
From Section \ref{sec flat} (see equation \eqref{eq rel rac}) we obtain that 
\begin{equation}\label{eq obs map gen}
  \textstyle
o(R_{\bfa,\bfk})=\sum_{n\geq 1} x^{\bound(\bfa+\bfk)}t^{n}\otimes h_{n,\bfa,\bfk}(\bfT),
\end{equation}
where $h_{n,\bfa,\bfk}(\bfT)$ is the degree $n$ part of the polynomial 
\begin{equation}\label{eq obs pol}
u^{\tla(\bfa+\bo(\bfk))+\tla(\bfk)}-u^{\tla(\bfa+\bfk)}.
\end{equation}
Note that in \eqref{eq obs map gen} we identified $u_0$ with $t$ and we also use that $x^{\bo(\bfa+\bo(\bfk))}=x^{\bo(\bfa+\bfk)}$
in $k[\ktS]$ by Lemma~\ref{r:projectionOfSum}. 
For our $R=[\un{0},1]\in \kS$ and $n\in \NN$ let us denote by $T^2(-nR)$ the degree $-nR$ part of $T^2_X$.

To prove Theorem \ref{th main} it is enough to prove that the dual of the obstruction map, denoted by $o^*$,  is surjective (see e.g.\ \cite[Section 10]{de jong} for the proof of this statement and note that $o^*\in \Hom((T^2_X)^*,W)$).  To do that we need to understand the equations of $k[\ktT]$. There are two (obvious) types of equations of $k[\ktT]$: the first one we call the loop equations and they are introduced in Section \ref{sec loop}; the second type are the so called local equations introduced in Section \ref{sec loc eq}. In Section \ref{sec-clusterCoord} we prove that the loop and local  equations are in fact all the equations of $k[\ktT]$ by introducing new generators of $\ktT$. 
In order to prove that the dual of the obstruction map is surjective we thus need to prove that the loop and local equations (as elements in $W$) are obtained in the image of $o^*$. For the loop equations this is done in Section \ref{sec loop} (see Corollary \ref{cor loop cor}) and for the local equations this is done in Section \ref{sec loc eq} (see Proposition \ref{pro image h g}).

\section{The loop equations}\label{sec loop}

In this section we generalize the results from \cite[Section 7]{alt} to our setting. Since this section is long and more technical we give some guidance and motivation at the beginning. 
In \cite{alt} and \cite{budapest} the proof of the versality relies on knowing the explicit equations of the versal base space.
We do not have explicit equations but in fact we do not need them, we only need the bi-linearity property, cf.\ Lemma \ref{lem imp lem}.

In Subsection \ref{subsec t2 rev} we analyze the $T^2_X$-module in more details and we introduce the submodule of $(T^2_X)^*$, cf.\ \eqref{eq t2 submod}, which is mapped to our loop equations by $o^*$. We first describe the restriction of the map $o^*$ to this submodule in Proposition \ref{prop des obs map} and prove that all the loop equations are in the image of $o^*$ in Proposition \ref{prop image psi} and Corollary \ref{cor loop cor}.

\subsection{Bi-linearity of the equations}
For each $c\in M$ and a closed path $\un{\mu}$ we define $S^+_{c,\un{\mu}}$ (resp. $S^-_{c,\un{\mu}}$) to be the set of edges 
$d^{ij}$ of $P$, such that $\lan \mu_{ij}d^{ij},c\ran> 0$ (resp. $\lan \mu_{ij}d^{ij},c\ran\leq 0$).
We see by \eqref{2 face equation} and Remark \ref{rem minus} that  
\begin{equation}\label{eq rel 333}
\sum_{d^{ij}\in S^+_{c,\un{\mu}}}\kLp(\mu_{ij}c,d^{ij})-\sum_{d^{ij}\in S^-_{c,\un{\mu}}}\kLp(\mu_{ij}c,d^{ij})=0.
\end{equation}
We call the equations corresponding to \eqref{eq rel 333} \emph{the loop equations} and denote them by
\begin{equation}\label{eq pcmu}
p(\un{\mu},c):=\prod_{d^{ij}\in S^+_{c,\un{\mu}}}u(\mu_{ij}c,d^{ij})-\prod_{d^{ij}\in S^-_{c,\un{\mu}}}u(\mu_{ij}c,d^{ij}).
\end{equation}
\begin{remark}\label{rem brezveze}
  We view \smash{$p(\un{\mu},c)$} as a polynomial in the variables $u_0,\dots,u_g$, which correspond to the generators of $\ktT$.
  There are many different ways to present \smash{$p(\un{\mu},c)$} as a polynomial in $u_0,\dots,u_g$.
  Whenever we say that a property holds for $p\in \cJ$ we mean that it holds for all possible presentations.
  In particular, the bi-linearity of $p$ shown  in Lemma \ref{lem imp lem} holds for all presentations of $p$ as a polynomial in~$\cJ$.
\end{remark}
Let $p^{(n)}(\un{\mu},c)$ denote the degree $n$ part of the polynomial $p(\un{\mu},c)$.

\begin{lemma}\label{lem imp lem}

For $m_1,m_2\in \tail(P)^\vee \cap M$ and a closed path $\un{\mu}$ we have
\begin{equation}\label{eq lin d}
p^{(n)}(\un{\mu},m_1+m_2)=p^{(n)}(\un{\mu},m_1)+p^{(n)}(\un{\mu},m_2)\in W_n.
\end{equation}
For two closed paths $\un{\mu}^1,\un{\mu}^2$ and $m\in \tail(P)^\vee \cap M$ it holds that 
\begin{equation}\label{eq lin p}
p^{(n)}(\un{\mu}^1+\un{\mu}^2,m)=p^{(n)}(\un{\mu}^1,m)+p^{(n)}(\un{\mu}^2,m)\in W_n.
\end{equation}
\end{lemma}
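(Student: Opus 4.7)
My plan is to reduce both identities to edge-wise rounding corrections coming from Lemma~\ref{lem trivial}, tracked carefully through the change of coordinates $u_i = u_0 - T_i$.

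\textbf{Setup.} Since $\ktT$ is generated in degree $1$, each factor $u(\mu_{ij}c,d^{ij})$ inside $p(\un{\mu},c) = M_+ - M_-$ stands for a monomial in the generator-variables $u_0,\ldots,u_g$, coming from a chosen decomposition of $\kLp(\mu_{ij}c,d^{ij})$ into a sum of degree-$1$ generators. Because both monomials of $p$ have the same total $\ktT$-degree $N(\un{\mu},c)$ by \eqref{eq rel 333}, after the substitution $u_i = u_0 - T_i$ the leading term $u_0^N$ cancels, and $p^{(n)}(\un{\mu},c)$ is the coefficient of $u_0^{N-n}$, a homogeneous polynomial of degree $n$ in the $T_i$.

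\textbf{Proof of \eqref{eq lin d}.} At each edge $d = [v^i,v^j]$, Lemma~\ref{lem trivial} applied to the pairs $(\langle \mu_{ij}m_1,v^\bullet\rangle,\langle \mu_{ij}m_2,v^\bullet\rangle)$ yields carries $\varepsilon_i,\varepsilon_j\in\{0,1\}$ with
\[
\kLp(\mu_{ij}(m_1+m_2),d) \;=\; \kLp(\mu_{ij}m_1,d) + \kLp(\mu_{ij}m_2,d) \;-\; \varepsilon_j\, s_{v^j} \;+\; \varepsilon_i\, s_{v^i},
\]
while edges that belong to $S^+$ for one of $m_1,m_2$ but to $S^-$ for the other are handled by Remark~\ref{rem minus}, which changes $\kLp(c,d)$ by an $s$-variable correction upon flipping the sign of $c$. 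Exponentiating per edge and multiplying over the loop produces a monomial identity in $k[\ktT]$ schematically of the shape
\[
M_\pm(\un{\mu},m_1+m_2)\cdot U^\pm \;=\; M_\pm(\un{\mu},m_1)\cdot M_\pm(\un{\mu},m_2)\cdot V^\pm,
\]
where $U^\pm,V^\pm$ are monomials only in the $s$-variables (and hence in the generator-variables of $\ktT$). Subtracting the $+$ and $-$ versions, each $s$-variable monomial has leading term $u_0^{\deg}$ in $u_0,\bfT$-coordinates and the lower-order terms multiply the binomials $p(\un{\mu},m_i)$ by elements of the ideal $(T_1,\ldots,T_g)$, which lie in $\cJ\cdot(T_1,\ldots,T_g)\subseteq\widetilde{\cJ}$. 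Extracting the coefficient of $u_0^{N-n}$ and passing to $W_n$ therefore gives
\[
p^{(n)}(\un{\mu},m_1+m_2) \;=\; p^{(n)}(\un{\mu},m_1) + p^{(n)}(\un{\mu},m_2) \quad\text{in } W_n.
\]
The proof of \eqref{eq lin p} is structurally identical, using Lemma~\ref{lem trivial} on the pair $(\mu_{ij}^1,\mu_{ij}^2)$ in place of $(m_1,m_2)$ and the fact that $\un{\mu}^1 + \un{\mu}^2$ is again a closed path.

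\textbf{Main obstacle.} The principal difficulty is bookkeeping around the sign sets $S^\pm_{c,\un{\mu}}$: an edge can lie in $S^+$ for $m_1$ but in $S^-$ for $m_2$, and potentially in either for $m_1+m_2$ (and analogously for loops). Uniformly absorbing all such sign flips into $\pm s_{v^\bullet}$-monomial corrections via Remark~\ref{rem minus}, and then confirming that the accumulated product of these corrections around the loop really does contribute to $\cJ\cdot(T_j) + \cJ_1 k[\bfT]$ once the degree-$n$ coefficient in $\bfT$ is extracted, is where the substantive work lies.
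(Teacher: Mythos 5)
Your strategy coincides with the paper's own: build a multiplicative identity in $k[\ktT]$ relating $p(\un{\mu},m_1+m_2)$ to $p(\un{\mu},m_1)$ and $p(\un{\mu},m_2)$ out of edge-wise rounding corrections (Lemma~\ref{lem trivial}, Remark~\ref{rem minus}), then use that any monomial of degree $D$ in the degree-one generators equals $u_0^{D}$ plus terms in $(T_1,\dots,T_g)$, so that extracting the degree-$n$ coefficient modulo $\widetilde{\cJ}$ kills all correction factors; the proof of \eqref{eq lin p} is handled the same way in the paper as well. The one inaccuracy is your claimed shape $M_\pm(\un{\mu},m_1+m_2)\cdot U^\pm=M_\pm(\un{\mu},m_1)\cdot M_\pm(\un{\mu},m_2)\cdot V^\pm$ with $U^\pm,V^\pm$ pure $s$-monomials: for an edge with, say, $\langle \mu_{ij}d^{ij},m_1+m_2\rangle>0$ but $\langle \mu_{ij}d^{ij},m_2\rangle<0$, combining Lemma~\ref{lem trivial} with Remark~\ref{rem minus} gives $\kLp(\mu_{ij}(m_1+m_2),d^{ij})+\kLp(\mu_{ij}m_2,d^{ij})=\kLp(\mu_{ij}m_1,d^{ij})+(s\text{-terms})$, so the factor $u(\mu_{ij}m_2,d^{ij})$ must be moved over to multiply the $(m_1+m_2)$-side rather than being absorbed into an $s$-correction, and subtracting your $+$ and $-$ identities only yields a multiple of the full binomial $p(\un{\mu},m_1+m_2)$ after the two sides are balanced --- this is exactly what the paper's sets $E_1,\dots,E_4$ and the symmetrized factors $\tfrac{1}{2}\big(\prod_{S^+_{m_i,\un{\mu}}}+\prod_{S^-_{m_i,\un{\mu}}}\big)$ in \eqref{eq pc1c2} accomplish. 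Since your final degree-extraction step is insensitive to which monomials occur as correction factors, this is repairable bookkeeping rather than a conceptual gap, and it is precisely the computation the paper itself dismisses as ``straightforward.''
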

\begin{proof}
  We define four sets of edges based on the sign of the pairing with $m_1+m_2$, $m_1$ and $m_2$:
  \begin{eqnarray*}
E_1:=&&\hspace{-1.4em}\{d^{ij}\in \cedges(P) \kst \braket{ \mu_{ij}d^{ij},m_1+m_2}<0,~\braket{ \mu_{ij}d^{ij},m_1}>0,~\braket{ \mu_{ij}d^{ij},m_2}<0\},\\
E_2:=&&\hspace{-1.4em}\{d^{ij}\in \cedges(P) \kst \braket{ \mu_{ij}d^{ij},m_1+m_2}>0,~\braket{ \mu_{ij}d^{ij},m_1}<0,~\braket{ \mu_{ij}d^{ij},m_2}>0\},\\
E_3:=&&\hspace{-1.4em}\{d^{ij}\in \cedges(P) \kst \braket{ \mu_{ij}d^{ij},m_1+m_2}<0,~\braket{ \mu_{ij}d^{ij},m_1}<0,~\braket{ \mu_{ij}d^{ij},m_2}>0\},\\
E_4:=&&\hspace{-1.4em}\{d^{ij}\in \cedges(P) \kst \braket{ \mu_{ij}d^{ij},m_1+m_2}>0,~\braket{ \mu_{ij}d^{ij},m_1}>0,~\braket{ \mu_{ij}d^{ij},m_2}<0\}.
  \end{eqnarray*}
Straightforward computation shows that for each non-lattice vertex $v\in P$ there exist $n_v,m_v\in \NN$ such that in $\cI_{\ktT}$ the following holds:
\begin{equation}\label{eq pc1c2}
p(\un{\mu},m_1+m_2)\prod_{d^{ij}\in E_1\cup E_2}u(\mu_{ij}m_1,d^{ij})\prod_{d^{ij}\in E_3\cup E_4}u(\mu_{ij}m_2,d^{ij})\prod_{v}u(s_v)^{m_v}=
\end{equation}
\[
  =\frac{1}{2}p(\un{\mu},m_1)\Big(\prod_{d^{ij}\in S^+_{m_2,\un{\mu}}}u(\mu_{ij}m_2,d^{ij})+\prod_{d^{ij}\in S^-_{m_2,\un{\mu}}}u(\mu_{ij}m_2,d^{ij})\Big)\prod_{v}u(s_v)^{n_v}+
\]
\[
  \frac{1}{2}p(\un{\mu},m_2)\Big(\prod_{d^{ij}\in S^+_{m_1,\un{\mu}}}u(\mu_{ij}m_1,d^{ij})+\prod_{d^{ij}\in S^-_{m_1,\un{\mu}}}u(\mu_{ij}m_1,d^{ij})\Big)\prod_{v}u(s_v)^{n_v},
\]
from which \eqref{eq lin d} follows after looking at the degree $n$ part of the above equation taken modulo $\widetilde{\cJ}$. We can prove \eqref{eq lin p} in a similar way, so we omit the proof. 
\end{proof}

\subsection{The module $T^2_X$ revisited}\label{subsec t2 rev}
We recall the following from \cite[Section 5.5]{klaus}.
Let $\sigma=\cone(P)$ be generated by $a^i\in N$ and let $E$ denote the Hilbert basis of $S=\sigma^\vee\cap (M\oplus \ZZ)$. We consider the canonical surjection $p:\ZZ^E\to M\oplus \ZZ$. Its kernel is a $\ZZ$-module $L(E):=\ker p$ which encodes the relations among elements in $E$. 
\begin{definition}\label{def lai}
For $R\in M\oplus \ZZ$ consider 
\[
  E_{a_i}^R:=E_i^R:=\{e\in E \kst \lan a^i,e \ran<\lan a^i,R \ran\}.
\]
For a subface $\tau\leq \sigma$ we define $E^R_\tau:=\bigcap_{a^i\in \tau}E^R_{i}$ and $L(E^R_\tau):=L(E)\cap \ZZ^{E^R_{\tau}}$.  Moreover, for $p\in \NN$ we define
\[
  L(E^R)_p:=\bigoplus_{\tau\leq \sigma,\dim\tau=p}L(E^R_\tau).
\]
 After defining $L(E^R)_0:=\bigcup_{i}E^R_{a^i}$ we get a complex $L(E^R)_\kbb$
 with the usual differentials. Let us define $L_k(E^R_\kbb):=L(E^R)_\kbb\otimes_\ZZ k$.
\end{definition}
We have an exact sequence
\begin{equation}\label{eq ex sequence}
0\to L_k(E^R_\kbb)\to k^{E^R_\kbb}\to \span_k E^R_\kbb\to 0.
\end{equation}
Let us consider the first homology group of the complex $L_k(E^R)_\kbb$:
\[
  H_1(L_k(E^R_\kbb))=\Bigg(\frac{\ker\Big( \bigoplus_{i}L_k(E^R_i)\to L_k(E)\Big)}{\im\Big(\bigoplus_{\lan a^i,a^k\ran\leq \sigma}L_k(E_i^R\cap E_k^R)\to \bigoplus_{i}L_k(E^R_i)\Big)} \Bigg),
\]
which is isomorphic to $H_2(\span_k E^R_\kbb)$ since $H_i(k^{E^R_\kbb})=0$ for $i\geq 1$.
In \cite[Section 5.5]{klaus} was proven that 
\begin{equation}\label{eq t2 submod}
  H_1(L_k(E^R_\kbb))\otimes_\ZZ k\subset \Big(T^2(-R)\Big)^*.
\end{equation}

Recall the elements $c_1,\dots,c_r$ appearing in the Hilbert basis of $S$ and the paths $\un{\lam}(a)$, $\un{\lam}^c(a)$ and $\lam^c(a)$ for $a,c\in \tail(P)^\vee$ (cf. Definition~\ref{def path}). For each vertex $v^i$ of $P$ we get the corresponding generator $a^i$ of $\sigma$. For a vertex $v$ of $P$ and $c\in \tail(P)^\vee$ we define similar paths 
\begin{eqnarray*}
  \un{\lam}(v)&:=& [\text{some path } v_*\leadsto v] = [\lam_1(v),\dots,\lam_r(v)]\in\Z^\kPr,\\
  \un{\mu}^\kc(v)&:=&[\text{some path } v\leadsto v(\kc)\text{ such that }\mu^c_{i}(v)\braket{ d^{i},\kc}\leq 0~\forall~d^{i}] = [\mu^c_1(v),\dots,\mu^c_r(v)]\in\Z^\kPr,\text{~and}\\
\un{\lam}^c(v)&:=&\un{\lam}(v)+\un{\mu}^c(v).
\end{eqnarray*}
In particular, $\un{\lam}(a)=\un{\lam}(v(a))$, $\un{\mu}^c(a)=\un{\mu}^c(v(a))$ and $\un{\lam}^c(a)=\un{\lam}^c(v(a))$.
For $n\in \NN$, $n\geq 2$, we define the map:
\[
  \begin{tikzcd}[row sep = 0ex]
    \psi_i^{(n)}:L_k(E^{nR}_{a^i})\ar[r]
    &
    \makebox[\widthof{$\sum_{j=1}^rq_jp^{(n)}(\un{\lam}^{c_j}(v^i)-\un{\lam}(v(c_j)),c_j).$}][l]{$W_n$}
    \\
    \makebox[\widthof{$\psi_i^{(n)}:L_k(E^{nR}_{a^i})$}][r]{$\un{q}$}\ar[mapsto,r]
    &
    \textstyle \sum_{j=1}^rq_jp^{(n)}(\un{\lam}^{c_j}(v^i)-\un{\lam}(v(c_j)),c_j).
  \end{tikzcd}
\]

\begin{lemma}\label{lem defin lem}
  The maps $\psi_i^{(n)}$ induce the linear map
  $
  \begin{tikzcd}
    \psi^{(n)}: H_1(L_k(E^{nR}_\kbb))\ar[r]& W_n. 
  \end{tikzcd}
  $
\end{lemma}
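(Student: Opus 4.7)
The plan is to carry out two verifications: that each individual $\psi_i^{(n)}$ actually lands in $W_n$, and that the sum $\psi^{(n)}:=\sum_i \psi_i^{(n)}$ kills the image of the boundary from two-faces, so that it descends to $H_1(L_k(E^{nR}_\kbb))$.

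First I would check well-definedness of $\psi_i^{(n)}$ itself. For each Hilbert basis element $c_j$, both paths $\un{\lam}^{c_j}(v^i) = \un{\lam}(v^i)+\un{\mu}^{c_j}(v^i)$ and $\un{\lam}(v(c_j))$ start at $\vast$ and end at $v(c_j)$, so their difference is a closed path on the 1-skeleton of $\kP$. Hence $p(\un{\lam}^{c_j}(v^i)-\un{\lam}(v(c_j)),c_j)$ is a genuine loop equation in the sense of \eqref{eq pcmu}, lying in $\cJ\subseteq\cI_{\ktT}$; its degree-$n$ part is therefore a legitimate class in $W_n$. Taking the $k$-linear combination with coefficients $q_j$ keeps us in $W_n$.

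Next I would establish the main step: $\psi^{(n)}$ vanishes on the image of $\bigoplus_{\dim\tau=2}L_k(E^{nR}_\tau) \to \bigoplus_i L_k(E^{nR}_i)$. Fix a two-face $\tau=\langle a^i,a^k\rangle\leq\sigma$ and an element $\un{q}=(q_1,\dots,q_r)\in L_k(E^{nR}_i\cap E^{nR}_k)$. With the standard sign convention the differential sends $\un{q}$ to $\un{q}$ in the $i$-slot and $-\un{q}$ in the $k$-slot, so the claim reduces to showing
\[
\textstyle
\sum_{j=1}^r q_j\,p^{(n)}\bigl(\un{\lam}^{c_j}(v^i)-\un{\lam}^{c_j}(v^k),\;c_j\bigr)\;=\;0\quad\text{in }W_n.
\]
Here each path $\un{\nu}^{c_j}(v^i,v^k):=\un{\lam}^{c_j}(v^i)-\un{\lam}^{c_j}(v^k)$ is already a closed loop on the 1-skeleton (both $\un{\lam}^{c_j}(v^i)$ and $\un{\lam}^{c_j}(v^k)$ connect $\vast$ to $v(c_j)$). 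The idea is to reorganise this sum edge-by-edge using the bilinearity in Lemma~\ref{lem imp lem}: for every compact edge $d^{\alpha\beta}$ the contribution to the loop equation is (up to signs) $\sum_j q_j\,\mu^{c_j}_{\alpha\beta}(v^i,v^k)\,\langle c_j, d^{\alpha\beta}\rangle$. Because $\un{q}\in L(E)$, we have $\sum_j q_j c_j=0$ in $M$, so for each fixed edge $d^{\alpha\beta}$ the pairing $\sum_j q_j\langle c_j,d^{\alpha\beta}\rangle$ vanishes. Combining this cancellation over the edges of the common loop, together with the fact that $\un{q}$ is supported in $E^{nR}_i\cap E^{nR}_k$ (so the relevant $s_v$ boundary contributions drop out as well), the sum collapses modulo $\widetilde{\cJ}$.

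The main obstacle I anticipate is the bookkeeping of the mixed sum in which both the path $\un{\mu}^{c_j}$ and the character $c_j$ vary with $j$: the bilinearity in Lemma~\ref{lem imp lem} is bilinear separately in each argument, but not simultaneously, so one cannot pull out the sum in one stroke. The trick will be to expand $p^{(n)}(\un{\nu}^{c_j}(v^i,v^k),c_j)$ linearly along the edges of the loop (using the edge-by-edge description \eqref{eq pcmu}), reassemble the result as a single loop equation whose ``character'' is $\sum_j q_j c_j=0$, and then invoke Lemma~\ref{problematic lemma} together with the vanishing of the relevant $s_v$-contributions (guaranteed by $\un{q}$ being in the double intersection $L_k(E^{nR}_i\cap E^{nR}_k)$) to conclude that the result represents zero in $W_n=\cJ/\widetilde{\cJ}$.
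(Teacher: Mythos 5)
Your overall structure is right (well-definedness of each $\psi_i^{(n)}$, then vanishing on the image of the two-face boundary map, which reduces to showing $\psi_i^{(n)}(\un{q})=\psi_j^{(n)}(\un{q})$ for $\un{q}\in L(E^{nR}_{a^i}\cap E^{nR}_{a^j})$), and you correctly identify the central obstacle: both the path $\un{\mu}^{c_j}$ and the character $c_j$ vary with $j$, so bilinearity cannot be applied in one stroke. However, your proposed resolution does not work, and the actual mechanism in the paper is different from what you describe.

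The paper's trick is to insert the single-edge path $\rho^{ij}$ from $v^i$ to $v^j$ and split each closed loop as
\[
\un{\lam}^{c_k}(v^i)-\un{\lam}^{c_k}(v^j)
=\bigl(\un{\lam}(v^i)-\un{\lam}(v^j)+\rho^{ij}\bigr)
+\bigl(\un{\mu}^{c_k}(v^i)-\un{\mu}^{c_k}(v^j)-\rho^{ij}\bigr).
\]
The first loop is \emph{independent of $k$}, so linearity in the character (the first half of Lemma~\ref{lem imp lem}) lets you sum over $k$ and use $\sum_k q_kc_k=0$ --- this is the only place where the relation $\un{q}\in L(E)$ enters. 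The second, $k$-dependent residual loops do \emph{not} ``reassemble into a single loop equation with character $\sum_j q_jc_j=0$'' as you suggest; they are killed by an entirely different argument, namely a \emph{degree bound}: because $\un{q}$ is supported in $E^{nR}_{a^i}\cap E^{nR}_{a^j}$, each residual loop equation $p(\un{\mu}^{c_k}(v^i)-\un{\mu}^{c_k}(v^j)-\rho^{ij},c_k)$ is a polynomial of total degree strictly less than $n$, so its degree-$n$ part is zero in $W_n$. Your attribution of the double-intersection hypothesis to the vanishing of ``$s_v$ boundary contributions,'' and your appeal to Lemma~\ref{problematic lemma}, miss this point entirely; without the degree argument the $k$-dependent terms do not cancel, and the proof does not close. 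This is a genuine gap rather than a cosmetic difference.
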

\begin{proof}
  We need to show that for every face $\span_{\RR_{\geqslant 0}}\{a^i,a^j\}<\sigma$ the maps  $\psi_i^{(n)}$ and $\psi_j^{(n)}$ agree on $L(E^{nR}_{a^i}\cap E^{nR}_{a^j})$. Let us write $\un{\lam}^k(v):=\un{\lam}^{c_k}(v)$ and compute using Lemma \ref{lem imp lem} that  
  \[
    \textstyle
  \psi_i^{(n)}(\un{q})-\psi_j^{(n)}(\un{q})=\sum_{k=1}^rq_kp^{(n)}(\un{\lam}^{k}(v^i)-\un{\lam}^{k}(v^j),c_k).
\]
Denoting by $\rho^{ij}$ the path consisting of the single edge running from $v^i$ to $v^j$ we see by Lemma \ref{lem imp lem} that for $\un{q}\in L(E^{nR}_{a^i}\cap E^{nR}_{a^j})$ we have
\[
  \psi_i^{(n)}(\un{q})-\psi_j^{(n)}(\un{q})=\sum_{k=1}^rq_kp^{(n)}(\un{\lam}(v^i)-\un{\lam}(v^j)+\rho^{ij},c_k)+\sum_{k=1}^r q_kp^{(n)}(\un{\mu}^k(v^i)-\un{\mu}^k(v^j)-\rho^{ij},c_k)=0.
\]
Indeed, the first sum is zero since $\sum_{k=1}^rq_kc_k=0$ and the second sum is zero since $\un{q}\in L(E^{nR}_{a^i}\cap E^{nR}_{a^j})$, from which we can easily compute that the degree of $p(\un{\mu}^k(v^i)-\un{\mu}^k(v^j)-\rho^{ij},c_k)$ is strictly smaller than $n$, which concludes the proof.
\end{proof}

\subsection{The restriction of the obstruction map}

\begin{proposition}\label{prop des obs map}
The map  $\sum_{n\geq 1}\psi^{(n)}$ is equal to the dual of the obstruction map $o^*$ restricted to
  \[\textstyle
    \bigoplus_{n\geq 1}H_1(L_k(E^{nR}_\kbb))\subset (T^2_X)^*.
  \]
\end{proposition}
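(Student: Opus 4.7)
The plan is to evaluate both maps on a representative $\un q\in L_k(E^{nR}_{a^i})$ of a class in $H_1(L_k(E^{nR}_\kbb))$ and show the two values in $W_n$ coincide.

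First, I would make the pairing $H_1(L_k(E^{nR}_\kbb))\subset (T^2_X)^*$ explicit on the Koszul generators: an element $\un q\in L(E^{nR}_{a^i})$ records a relation among Hilbert basis elements supported on the face $a^i$, which corresponds to a syzygy $R_{\bfa,\bfk}$ from Definition \ref{d:sysygyGenerators}, with the $a^i$-constraint dictating how the multi-index splits into $\bfa+\bfk$. Applying the obstruction formula \eqref{eq obs map} to this $R_{\bfa,\bfk}$ produces $h_{n,\bfa,\bfk}(\bfT)$, the degree-$n$ part of $u^{\tla(\bfa+\bo(\bfk))+\tla(\bfk)}-u^{\tla(\bfa+\bfk)}$.

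Second, I would compute this $u$-monomial difference by expanding each $\tla$ via Proposition \ref{prop tildelam}. Each $\tla(-)$ becomes a path-dependent $\NN$-combination of edge generators $\kLp(\lam^j_\nu(c)c_j,d^\nu)$ running from the base point $v_*$ to the corresponding vertex $v(c_j)$. The three paths appearing in $\tla(\bfa+\bo(\bfk))$, $\tla(\bfk)$, and $\tla(\bfa+\bfk)$ can be chosen coherently so that their base-point portions cancel (modulo the $2$-face relations \eqref{2 face equation}, which kill $\kLp$-cycles around compact $2$-faces), leaving, for each $j$, a genuine closed loop $\un{\lam}^{c_j}(v^i)-\un{\lam}(v(c_j))$ traversed with character $c_j$ and weight $q_j$.

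Third, I would match this residual combination with the loop polynomials $p(\un\mu,c)$ of \eqref{eq pcmu}. By construction, the $u$-monomial difference along a closed loop with character $c$ is exactly $p(\un\mu,c)$, so its degree-$n$ part is $p^{(n)}(\un\mu,c)\in W_n$. Summing over $j$ and using the bi-linearity in $(\un\mu,c)$ from Lemma \ref{lem imp lem}, the evaluation of $o^*$ at $\un q$ becomes $\sum_{j=1}^r q_j\,p^{(n)}\bigl(\un{\lam}^{c_j}(v^i)-\un{\lam}(v(c_j)),\,c_j\bigr)=\psi_i^{(n)}(\un q)$. Independence of the chosen face $a^i$ and invariance under the differential of $L_k(E^{nR}_\kbb)$ are then precisely the content of Lemma \ref{lem defin lem}, so the construction descends to $H_1$.

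The main obstacle is the second step: one has to choose the paths entering the three $\tla$-expressions so that the non-loop contributions cancel cleanly. Different paths between the same endpoints differ by boundaries of compact $2$-faces, and the $2$-face relations \eqref{2 face equation} make the corresponding $\kLp$-combinations vanish in $\ktT$; this gives the flexibility needed to produce the required cancellation. Once this alignment is in place, the remaining manipulation is a direct expansion using Proposition \ref{prop tildelam} and Lemma \ref{lem imp lem}.
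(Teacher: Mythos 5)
Your overall route is the one the paper takes: evaluate everything on the syzygies $R_{\bfa,\bfk}$, expand the exponent difference $\tla(\bfa+\bo(\bfk))+\tla(\bfk)-\tla(\bfa+\bfk)$ via Proposition~\ref{prop tildelam}, recognize the result as a sum of loop polynomials $p^{(n)}$ attached to closed paths, and invoke the bi-linearity of Lemma~\ref{lem imp lem} together with Lemma~\ref{lem defin lem} to match it with $\psi_i^{(n)}$. Steps two and three of your sketch are essentially the paper's computation.

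The genuine gap is in your first step. You assert that a class $\un q\in L_k(E^{nR}_{a^i})$ ``corresponds to a syzygy $R_{\bfa,\bfk}$'', but these live in different objects: $\un q$ is a relation in $\ZZ^E$ among Hilbert basis elements, while $R_{\bfa,\bfk}$ is a generator of $\cR/\cR_0$ (and, incidentally, is not a Koszul relation --- those are exactly what is modded out in Definition~\ref{def t2}). What is actually needed is the explicit form of the pairing $H_1(L_k(E^{nR}_\kbb))\times T^2(-nR)\to k$, i.e.\ a cocycle in $\Hom(\cR/\cR_0,\,W_n\otimes\cO_X)$ representing $\psi^{(n)}$. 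The paper obtains this from \cite[Theorem~3.5]{cup}: the representative sends $R_{\bfa,\bfk}$ to $\bigl(\psi^{(n)}_{v(c_\bfk)}(\bfk-\bo(\bfk))-\psi^{(n)}_{v(c_\bfa+c_\bfk)}(\bfk-\bo(\bfk))\bigr)x^{\bfa+\bfk-nR}$ when $\la(s_\bfa+s_\bfk)\geq n$, and to $0$ otherwise. This difference of $\psi^{(n)}$ evaluated at the two vertices $v(c_\bfk)$ and $v(c_\bfa+c_\bfk)$ is precisely what produces, via Lemma~\ref{lem imp lem}, the closed loops $\un{\lam}^{j}(c_\bfk)-\un{\lam}^{j}(c_\bfa+c_\bfk)$ that match the $o^*$-side; your sketch instead identifies the loop with $\un{\lam}^{c_j}(v^i)-\un{\lam}(v(c_j))$ from the definition of $\psi_i^{(n)}$, which is a different (though related) path, and the bridge between the two is exactly the missing pairing formula. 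Without that formula the comparison in your third step cannot actually be carried out, so you should supply it (or cite \cite{cup}) before the rest of the argument goes through.
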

\begin{proof}
  Let $\bfa=(k^\bfa_1,\dots,k^\bfa_r)\in \NN^r$ and $\bfk=(k^{\bfk}_1,\dots,k^\bfk_r)\in \NN^r$. Let
  \begin{eqnarray*}
    c_{\bfa}&:=&\textstyle\sum_{j=1}^rk^\bfa_jc_j,\\
    c_{\bfk}&:=&\textstyle\sum_{j=1}^rk^\bfk_jc_j, 
  \end{eqnarray*}  
  where $c_j$ appear in the Hilbert basis of $S$, see \eqref{eg hilbbas}.
We denote $\bo(\bfa+\bfk)=(k^{\bo(\bfa+\bfk)}_1,\dots,k^{\bo(\bfa+\bfk)}_r)$ and $\bo(\bfk)=(k_1^{\bo(\bfk)},\dots,k_r^{\bo(\bfk)})$.
Recall the linear relation $R_{\bfa,\bfk}$, which can also be rewritten as 
\begin{eqnarray}
  \label{eq eq druga}
  R_{\bfa,\bfk}&=&\phantom{-}x^{\bfa+\bfk}-x^{\bfa+\bo(\bfk)}t^{\la(\bfk)}-x^\bfa\big(x^\bfk-x^{\bo(\bfk)}t^{\la(c)}\big)+t^{\la(\bfk)}x^{\bo(\bfa+\bfk)}-\\
  \nonumber &&-x^{\bo(\bo(\bfk)+\bfa)}t^{\la(\bo(\bfk)+\bfa)+\la(\bfk)}-t^{\la(\bfk)}\big(x^{\bo(\bfk)+\bfa}-x^{\bo(\bo(\bfk)+\bfa)}t^{\la(\bo(\bfk))+\bfa} \big).
\end{eqnarray}
Let us denote $s_{\bfa}:=\sum_{j=1}^rk^{\bfa}_js_j$ and $s_{\bfk}:=\sum_{j=1}^rk^{\bfk}_js_j$, where $s_j$ are the Hilbert basis elements, see \eqref{eg hilbbas}. 
Using \cite[Theorem 3.5]{cup} we can find an element of
\[
  \h(\cR/\cR_0,W_n\otimes \cO(X))
\]
representing $\psi^{(n)}.$ Using our notation we can easily verify that it sends relation $R_{\bfa,\bfk}$ to 
\begin{equation}\label{eq prvi dokaz}
\left\{
\begin{array}{ll}
\Big(\psi^{(n)}_{v(c_\bfk)}(\bfk-\bo(\bfk))-\psi^{(n)}_{v(c_\bfa+c_\bfk)}(\bfk-\bo(\bfk))\Big)x^{\bfa+\bfk-nR}& \text{ if }\la(s_\bfa+s_\bfk)\geq n, \\
\rule{0pt}{2em}0&\text{ otherwise}.
\end{array}
\right.
\end{equation}
Moreover, we define 
\begin{eqnarray*}
  t'_{c_\bfk,j}&:=&\textstyle\lan v_{*},c_j\ran s_{v_{*}}+\sum_\nu\delta_{j,\nu}(c_\bfk)\kLp\big(\lam^j_\nu(c_{\bfk})c_j,d^\nu\big)\in \cT_\ZZ^*(P),\\
  t'_{c_{\bfa}+c_{\bfk},j}&:=&\textstyle\lan v_{*},c_j\ran s_{v_{*}}+\sum_\nu\delta_{j,\nu}(c_\bfa+c_\bfk)\kLp\big(\lam^j_\nu(c_{\bfa}+c_{\bfk})c_j,d^\nu\big)\in \cT_\ZZ^*(P),
\end{eqnarray*}
where 
\[
  \delta_{j,\nu}(c_\bfk):=\left\{\begin{array}{ll}
~~~1& \text{ if }\lan c_j,\lam^j_\nu(c_\bfk) d^\nu\ran>0\\
-1&\text{ if }\lan c_j,\lam^j_\nu(c_\bfk) d^\nu\ran \leq0,~~~~~~~
 \end{array}
 \right.
 \delta_{j,\nu}(c_\bfa+c_\bfk):=\left\{\begin{array}{ll}
~~~1& \text{ if }\lan c_j,\lam^j_\nu(c_\bfa+c_\bfk) d^\nu\ran>0\\
-1&\text{ if }\lan c_j,\lam^j_\nu(c_\bfa+c_\bfk) d^\nu\ran \leq0.
 \end{array}
 \right.
 \]
Using Proposition \ref{prop tildelam} we see that the polynomial  
\[
  u^{\tla(\bfa+\bo(\bfk))+\tla(\bfk)}-u^{\tla(\bfa+\bfk)},
\]
appearing in
\eqref{eq obs pol}, equals
$u^{e_1}-u^{e_2},$
where
\begin{eqnarray*}
  e_1&=&\textstyle\sum_{j=1}^r\big((k_j^{\bfa}+k_j^{\bo(\bfk)}-k_j^{\bo(\bfa+\bo(\bfk))})t'_{c_\bfa+c_\bfk,j}+(k_j^\bfk-k_j^{\bo(\bfk)})t'_{c_\bfk,j},\\
  e_2&=&\textstyle\sum_{j=1}^r(k_j^{\bfa}+k_j^{\bfk}-k_j^{\bo(\bfa+\bfk)})t'_{c_{\bfa}+c_{\bfk},j}.
\end{eqnarray*}
By the equation \eqref{eq boundary} we see that $c_{\bfk}=\sum_{j=1}^rk_j^\bfk=\sum_{j=1}^rk_j^{\bo(\bfk)}$ and by Lemma \ref{r:projectionOfSum} it holds that
\[
  \bo(\bfa+\bo(\bfk))=\bo(\bfa+\bfk).
\]
Using also Lemma \ref{lem imp lem} we see that in $W_n$ it holds that
\[
  \textstyle
  \psi^{(n)}_{v(c)}(\bfk-\bo(\bfk))-\psi^{(n)}_{v(a+c)}(\bfk-\bo(\bfk))=
\sum_j(k^\bfk_j-k^{\bo(\bfk)}_j)p^{(n)}\big(\un{\lam}^{j}(c_\bfk)-\un{\lam}^j(c_\bfa+c_\bfk),c_j\big).
\]
Since $e_1-e_2=(k_j^{\bfc}-k_j^{\bo(\bfc)})(t'_{c_\bfa+c_\bfk,j}-t'_{c_\bfk,j})$, the proof now follows.
\end{proof}

\begin{proposition}\label{prop image psi}
The image of the map $\psi^{(n)}$ contains the equations $p^{(n)}(\eps,c)$ for all bounded $2$-faces $\eps$ and $c\in \tail(\kP)^\vee\cap M$.
\end{proposition}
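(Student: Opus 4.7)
The goal is to produce, for each bounded $2$-face $\epsilon$ of $P$ with cyclically ordered vertices $v^{i_1},\ldots,v^{i_k}$ and each $c\in\tail(P)^\vee\cap M$, an explicit cycle $\zeta_{\epsilon,c}\in H_1(L_k(E^{nR}_\kbb))$ with $\psi^{(n)}(\zeta_{\epsilon,c})=p^{(n)}(\epsilon,c)\in W_n$. The cycle I have in mind lives on the rays $a^{i_1},\ldots,a^{i_k}$ of $\sigma$ associated to the vertices of $\epsilon$: I would build a tuple $(\un{q}^{(\nu)})_{\nu=1}^k$ with $\un{q}^{(\nu)}\in L_k(E^{nR}_{a^{i_\nu}})$ and $\sum_\nu \un{q}^{(\nu)}=0$ in $L_k(E)$, so that the cyclic sum $\sum_\nu \psi^{(n)}_{i_\nu}(\un{q}^{(\nu)})$ computes the loop equation around $\partial\epsilon$.

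I would proceed in two steps. First, I would fix a coherent family of paths $\underline{\mu}^{c_j}(v^{i_\nu})$ from each vertex $v^{i_\nu}$ of $\epsilon$ to the relevant $v(c_j)$, arranged so that for every boundary edge $d^{i_\nu i_{\nu+1}}$ of $\epsilon$ the auxiliary closed loops $\underline{\mu}^{c_j}(v^{i_\nu})-\underline{\mu}^{c_j}(v^{i_{\nu+1}})-\rho^{i_\nu i_{\nu+1}}$ have degree strictly less than $n$. By the same degree-drop argument used in the proof of Lemma~\ref{lem defin lem}, any contribution of such loops to $\psi^{(n)}$ vanishes in $W_n$. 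Second, I would encode a Hilbert-basis decomposition of $c$ based at each vertex $v^{i_\nu}$ as a coefficient vector $\un{q}^{(\nu)}$; the cyclic differences $\un{q}^{(\nu)}-\un{q}^{(\nu+1)}$ would then record how the decomposition shifts as one crosses the edge $d^{i_\nu i_{\nu+1}}$, and these local shifts telescope around $\partial\epsilon$, forcing $\sum_\nu \un{q}^{(\nu)}=0$. Evaluating $\psi^{(n)}$ on this cycle and using the bilinearity from Lemma~\ref{lem imp lem} in both arguments, the sum should collapse to $p^{(n)}$ of the complete boundary loop $\partial\epsilon$ paired with $c$, which is precisely $p^{(n)}(\epsilon,c)$.

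The main obstacle will be the simultaneous bookkeeping needed to reconcile the cycle condition $\sum_\nu \un{q}^{(\nu)}=0$ in $L_k(E)$ with the support conditions $\un{q}^{(\nu)}\in L_k(E^{nR}_{a^{i_\nu}})$. When $c$ is large compared to $n$, some natural Hilbert basis summands will leave $E^{nR}_{a^{i_\nu}}$ and must be replaced with characters whose attainment vertex $v(c_j)$ sits close enough to $v^{i_\nu}$ for the support condition to hold. Once such a consistent local decomposition of $c$ along the vertices of $\epsilon$ is in hand, the identification $\sum_\nu\psi^{(n)}_{i_\nu}(\un{q}^{(\nu)})=p^{(n)}(\epsilon,c)$ is essentially forced by Lemma~\ref{lem imp lem}, so the entire argument should reduce to this combinatorial construction together with a careful check that every auxiliary loop appearing along the way is short enough in degree to vanish in $W_n$.
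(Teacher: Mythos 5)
Your plan follows essentially the same route as the paper: the paper also realizes $p^{(n)}(\eps,c)$ as the image under $\psi^{(n)}$ of a cycle supported on the vertices of $\eps$, whose $\nu$-th component is the difference of two Hilbert-basis decompositions of $[c,\ketaZ(c)]$ adapted to the two edges of $\eps$ meeting $v^{i_\nu}$; these differences telescope around $\bo\eps$, and the evaluation collapses to $p^{(n)}(\eps,c)$ by exactly the bilinearity of Lemma~\ref{lem imp lem} together with the degree-drop of the auxiliary loops $\un{\mu}^{j}(v^{i_\nu})-\un{\mu}^{j}(v^{i_{\nu+1}})-\rho^{i_\nu}$, as in Lemma~\ref{lem defin lem}. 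So the computational half of your argument matches the paper's.

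However, what you flag as ``the main obstacle'' is in fact the crux of the proof, and you leave it unresolved. You need, for each edge $\lan a^{i},a^{i+1}\ran$ of $\eps$, a representation $[c,\ketaZ(c)]=\sum_j q_{i,j}[c_j,\ketaZ(c_j)]+q_i(\un{0},1)$ with every contributing $[c_j,\ketaZ(c_j)]$ lying in $E^{nR}_{a^{i}}\cap E^{nR}_{a^{i+1}}$. Such a representation exists precisely when $\span_k(E^{nR}_{a^i}\cap E^{nR}_{a^{i+1}})$ is all of $\span_k(M\oplus\ZZ)\cong k^d$, and this is where the standing hypothesis that the sub-monoids $\ktT_d$ are generated in degree $1$ must enter: it forces every compact edge to have lattice length $<2$ (Proposition~\ref{prop-degtt-kshort} and Remark~\ref{rem klaus remark}), which is what guarantees the span is full. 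Your proposal never invokes this hypothesis, and the claim is genuinely false without it — see Example~\ref{ex house} and Remark~\ref{rem finrem}, where $\span_k(E^{2R}_{a^1}\cap E^{2R}_{a^2})\cong k^2\subsetneq k^3$ and the construction breaks down. The paper closes this gap homologically: it verifies the full-span condition, deduces from the acyclicity of the constant-coefficient face complex $\bigoplus_{\tau<P}k^d$ that there is a surjection $g:\bigoplus_{\eps}k^d\surj H_2(\span_k E^{nR}_\kbb)\cong H_1(L_k(E^{nR})_\kbb)$, and then describes $g[c,\ketaZ(c)]$ by the telescoping recipe you envisage. To complete your argument you must supply this existence step, explicitly citing the degree-$1$-generation hypothesis.
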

\begin{proof}
Let us recall the isomorphism of homology groups $H_1(L_k(E^{nR})_\kbb)\cong H_2(\span_k E^{nR}_\kbb)$ explained after the exact sequence \eqref{eq ex sequence} ($n\in \NN$). Let the rank of the lattice $M$ be $d-1$ and thus $\span_k(M\oplus \ZZ)\cong k^d$. For $n\geq 2$ we have 
\[
  H_2(\span_k E^{nR}_\kbb)=\frac{\ker[\bigoplus_{\lan a^i,a^j\ran<P}k^d\to \bigoplus_{a^i<P}k^{d}]}{\im[\bigoplus_{\eps<P} \span_k(\cap_{a^i\in \eps}E_i^{nR})\to \bigoplus_{\lan a^i,a^j\ran<P}k^{d}]}.
\]
Indeed, $\span_k E^{nR}_{a^i}\cong k^d$ clearly holds for all rays $a^i\in \sigma$ and
\begin{equation}\label{eq lep eq}
\span_{k}(E^{nR}_{a^i}\cap E^{nR}_{a^j})\cong k^d
\end{equation}
holds for all 2-faces $\span_{\RR_{\geqslant 0}}\{a^i,a^j\}$ of $\sigma$ since the lattice length of all edges is smaller than $2$ because the semigroups $\ktT_d$ are generated by degree $1$ elements, see Proposition \ref{prop-degtt-kshort} and Remark \ref{rem klaus remark}.

Clearly it holds that 
\[
  \ker[\bigoplus_{\lan a^i,a^j\ran<P}k^d\too \bigoplus_{a^i<P}k^{d}]\cong \im[\bigoplus_{\eps<P,\dim \eps=2}k^d\too \bigoplus_{\lan a^i,a^j\ran<P}k^d]
\]
since the complex $\bigoplus_{\tau<P,\dim \tau=\kbb}k^d$ is acyclic in degrees $\geq 1$. Thus we have a surjection 
\[
  \begin{tikzcd}
  g:\bigoplus_{\eps<P,\dim\eps=2}k^d\ar[twoheadrightarrow,r]& H_2(\span_k E^{nR}_\kbb)\cong H_1(L_k(E^{nR})_\kbb).    
  \end{tikzcd}
\]

In the following we will explicitly describe the map $g$. 
After choosing a $2$-face with oriented edges $d^1,\dots,d^m$ we represent $[c,\ketaZ(c)]$ as a linear combination of elements of $E_{a^i}^{nR}\cap E_{a^{i+1}}^{nR}$:
\[
  \textstyle
  [c,\ketaZ(c)]=\sum_jq_{i,j}[c_j,\ketaZ(c_j)]+q_i(\un{0},1),
\]
and $q_{i,j}\ne 0$ implies $[c_j,\ketaZ(c_j)]\in E^{nR}_{a^i}\cap E^{nR}_{a^{i+1}}$.
This corresponds to the lifting of an element from $\span_k E^{nR}_2$ to $k^{E^{nR}_2}$.
From this we get an element in
\[
  \textstyle
  \ker\big(\bigoplus_iL(E_{a^i}^{nR})\too L(E)\big)
\]
whose $i$-th summand is the linear relation 
\[
  \textstyle
  \sum_j(q_{i,j}-q_{i-1,j})[c_j,\ketaZ(c_j)]+(q_i-q_{i-1})(\un{0},1)=0.
\]
Thus we explicitly describe the map $g$.

Now we will check that $(\psi^{(n)}\circ g)[c,\ketaZ(c)]=p^{(n)}(\eps,c)$ holds:
 \begin{eqnarray*}
 (\psi^{(n)}\circ g)[c,\ketaZ(c)]&=&\textstyle\sum_{i=1}^m\sum_{j=1}^r(q_{i,j}-q_{i-1,j})p^{(n)}(\un{\lam}^{c_j}(v^i)-\un{\lam}(v(c_j)),c_j)\\
\rule{0pt}{1.5em} &=&\textstyle \sum_{i=1}^m\sum_{j=1}^r p^{(n)}(\un{\lam}^{c_j}(v^i)-\un{\lam}^{c_j}(v^{i+1}),q_{i,j}c_j),
 \end{eqnarray*}

 where in the last equality we used bi-linearity of $p^{(n)}$ proven in Lemma \ref{lem imp lem}. Now as in Lemma \ref{lem defin lem} we introduce the path $\rho^{i}$ consisting of the single edge running from $a^i$ to $a^{i+1}$ and compute that
   \begin{eqnarray*}
  (\psi^{(n)}\circ g)[c,\ketaZ(c)]
     &=&\textstyle\sum_{i=1}^m\sum_{j=1}^r p^{(n)}(\un{\lam}(v^i)+\un{\mu}^j(v^i)-\un{\lam}(v^{i+1})-\un{\mu}^j(v^{i+1}),q_{i,j}c_j)\\
      &=&\rule{0pt}{1.5em}\textstyle\sum_{i=1}^m p^{(n)}(\un{\lam}(v^i)-\un{\lam}(v^{i+1})+\rho^{i},\sum_{j=1}^rq_{i,j}c_j)+\\
      &&\rule{0pt}{1.5em}\textstyle+\sum_{i=1}^m\sum_{j=1}^rp^{(n)}(\un{\mu}^j(v^i)-\un{\mu}^j(v^{i+1})-\rho^{i},q_{i,j}c_j)\\
      &=&\rule{0pt}{1.5em}\textstyle\sum_{i=1}^m p^{(n)}(\un{\lam}(v^i)-\un{\lam}(v^{i+1})+\rho^i,\sum_{j=1}^rq_{i,j}c_j).
   \end{eqnarray*}
We used in the computation above that $p^{(n)}(\un{\mu}^j(v^i)-\un{\lam}^j(v^{i+1})-\rho^i,q_{i,j}c_j)=0$, which can be verified the same way as in the proof of Lemma \ref{lem imp lem}. From this it follows that 
\[
  \textstyle
 (\psi^{(n)}\circ g)[c,\ketaZ(c)]=\sum_{i=1}^mp^{(n)}(\un{\lam}(v^i)-\un{\lam}(v^{i+1})+\rho^i,c)=\sum_{i=1}^mp^{(n)}(\rho^i,c)=p^{(n)}(\eps,c).
 \]
\end{proof}

\begin{corollary}\label{cor loop cor}
The image of $o^*$ contains all the  degree $n$ parts of all loop equations \eqref{eq pcmu}.
\end{corollary}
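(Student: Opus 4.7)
The plan is to trace back the identifications established in the preceding two propositions. By Proposition~\ref{prop des obs map}, the restriction of $o^*$ to $\bigoplus_n H_1(L_k(E^{nR}_\bullet)) \subset (T^2_X)^*$ coincides with the map $\sum_n \psi^{(n)}$, and by Proposition~\ref{prop image psi} the image of $\psi^{(n)}$ already contains every ``face equation'' $p^{(n)}(\eps, c)$ indexed by a bounded $2$-face $\eps$ of $P$ and an element $c \in \tail(P)^\vee \cap M$. What remains, therefore, is to express an arbitrary loop equation $p^{(n)}(\un{\mu}, c)$ as a $\ZZ$-linear combination of such face equations.

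First, I would reduce the character to the tail cone. For arbitrary $c \in M$ one can pick $c_1 \in \tail(P)^\vee \cap M$ large enough so that also $c_2 := c + c_1 \in \tail(P)^\vee \cap M$, and then bi-linearity \eqref{eq lin d} of Lemma~\ref{lem imp lem} (extended via the natural $\ZZ$-linear structure) yields
\[
p^{(n)}(\un{\mu}, c) \;=\; p^{(n)}(\un{\mu}, c_2) - p^{(n)}(\un{\mu}, c_1),
\]
so it suffices to treat the case $c \in \tail(P)^\vee \cap M$.

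Next, I would reduce the closed path $\un{\mu}$ to a single oriented boundary of a bounded $2$-face. The compact faces of $P$ form a contractible polyhedral complex, so every closed path in its $1$-skeleton is, up to orientation, a $\ZZ$-linear combination of oriented boundaries $\bo \eps_i$ of compact $2$-faces. Combining such a decomposition with bi-linearity \eqref{eq lin p} of Lemma~\ref{lem imp lem}, together with the observation that reversing $\un{\mu}$ negates $p^{(n)}(\un{\mu}, c)$ (which is immediate from the definition \eqref{eq pcmu}, since reversal swaps $S^+_{c,\un{\mu}}$ with $S^-_{c,\un{\mu}}$), one obtains
\[
p^{(n)}(\un{\mu}, c) \;=\; \sum_i a_i \, p^{(n)}(\eps_i, c),
\]
and the first reduction then places $p^{(n)}(\un{\mu}, c)$ in the image of $o^*$.

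The main obstacle I expect is the topological input: the claim that the compact $2$-faces of $P$ bound all $1$-cycles of its compact $1$-skeleton. Intuitively this follows from the contractibility of the bounded complex of a polyhedron, but it needs a careful justification, especially when $P$ is unbounded, as the compact complex of $P$ need not coincide with the face complex of $\conv(\Vrtx P)$. One can invoke the general fact that the bounded complex of any polyhedron is contractible (hence its $1$-skeleton has $H_1$ generated by $2$-face boundaries), which is essentially the same acyclicity input already used in the proof of Proposition~\ref{prop image psi}.
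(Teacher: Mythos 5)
Your proposal is correct and follows essentially the same route as the paper's proof: decompose the closed path into boundaries of bounded $2$-faces, apply the bi-linearity of Lemma~\ref{lem imp lem}, and conclude via Propositions~\ref{prop des obs map} and~\ref{prop image psi}. The extra reduction of a general $c\in M$ to $\tail(P)^\vee\cap M$ and the explicit topological justification for the $2$-face decomposition are details the paper leaves implicit, not a different argument.
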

\begin{proof}
Let $p(\un{\mu},c)$ be a loop equation. By Lemma \ref{lem imp lem} we can write its degree $n$ part as $\sum_{j=1}^kp^{(n)}(\eps_j,c)$, where $\eps_j$ are bounded $2$-faces. We conclude by Propositions \ref{prop des obs map} and \ref{prop image psi}.
\end{proof}

\begin{remark}\label{rem finrem}
Note that in the proof of Proposition \ref{prop image psi} we really need the assumption that the semigroups $\ktT_d$ are generated by degree $1$ elements. If we would only assume that $\ktT$ is generated by degree $1$ elements, then the crucial equation \eqref{eq lep eq} in the proof above might not be satisfied. See Example \ref{ex house} and consider the edge $d_1$ that defines the 2-face $\span_{\RR_{\geqslant 0}}\{ a^1,a^2\}$ of $\sigma$. Here we have that $\lan a^i,R \ran=1$ and thus 
\[
  \span_{k}(E^{2R}_{a^1}\cap E^{2R}_{a^2})\cong \span_k((a^1)^\perp\cap (a^2)^\perp,R)\cong k^2.
\]
The rank of $M\oplus \ZZ$ is $3$ here so we see that the equation \eqref{eq lep eq} is not satisfied in this example.
\end{remark}

\section{The local equations}\label{sec loc eq}

Let us now explicitly write the set of generators from Proposition \ref{pro 2dim fin gen}. For each compact edge $d^{ij}=v^j-v^i$
let
\[
  k_{ij}:=\min\{ |\lan c,d^{ij}\ran| \,;\, c\in M, \lan c,d^{ij}\ran\ne 0\}.
\]
We choose $c_{ij}\in M$ 
such that $\lan c_{ij},d^{ij}\ran=k_{ij}$.  We define $m_{ij}$ to be the minimal natural number such that $m_{ij}\lan c_{ij},v^i\ran,m_{ij}\lan c_{ij},v^{j}\ran\in \ZZ$. 

From Proposition \ref{pro 2dim fin gen} we see that the following set 
is a generating set for $\ktT$:
\begin{equation}\label{eq gen ktt}
\big\{\kLp(k c_{ij},d^{ij}),s_v\kst d^{ij}\in \cedges(P),k\in \{\pm 1,\dots,\pm m_{ij}\},v\in \Vrtx(P)\big\}.
\end{equation}
We fix an edge $d^{ij}$ and let $n$ be the maximal natural number, such that the degree of $\kLp(n c_{ij},d^{ij})$ equals~$1$.
Similarly, let $m$ be the maximal natural number, such that the degree of $\kLp(-m c_{ij},d^{ij})$ is $1$.
Recall the definition of the sub-monoid $\ktT_{ij}:=\ktT_{d^{ij}}$ from Definition \ref{def submon}.
Since $\ktT$ is generated by degree $1$ elements, then the following elements are the minimal generators of $\ktT_{ij}$:
\[
  s_i,s_j,~~~\kLp(c_{ij},d^{ij}),\dots,\kLp(nc_{ij},d^{ij}),~~~\kLp(-c_{ij},d^{ij}),\dots,\kLp(-mc_{ij},d^{ij}).
\]

Let the polytope $Q_{ij}$ be the convex hull of the vertices $(0,0), (0,1), (n,0), (m,1)$. Let $C(Q_{ij})$ be the cone over this polytope, i.e., generated by $(0,0,1), (0,1,1), (n,0,1), (m,1,1)$. Denote the following monoid  by $T'_{ij}:=\ZZ^3\cap C(Q_{ij})$.
From the description of these elements in Section \ref{sec sem ktt}, one can see that the monoid  $\ktT_{ij}$ is isomorphic to $T'_{ij}$: the isomorphism is given by 
\[
  s_i\mapsto (0,0,1),~~s_{j}\mapsto (0,1,1),~~\kLp(ac_{ij},d^{ij})\mapsto (a,0,1),~~\kLp(-bc_{ij},d^{ij})\mapsto (b,1,1),
\]
for $a=1,\dots,n$ and $b=1,\dots,m$.

\begin{corollary}\label{cor eq of tij}
The affine variety $\spec k[\ktT_{ij}]$ is given by the equations 
\begin{eqnarray}
  \label{eq first e}
  x_{k_1}x_{k_2}-x_{l_1}x_{l_2}&=&0, ~~~1\leq k_1+k_2=l_1+l_2\leq n,\\
  \label{eq second e}
  y_{k_1}y_{k_2}-y_{l_1}y_{l_2}&=&0, ~~~1\leq k_1+k_2=l_1+l_2\leq m,\\
  \label{eq third e}
  x_{k}y_0-y_kx_0&=&0, ~~~1\leq k\leq \min\{n,m\},
\end{eqnarray}
where, for $k\geq 0$,
$x_k$ and $y_k$  correspond to the generators $\kLp(kc_{ij},d^{ij})$ and $\kLp(-kc_{ij},d^{ij})$, respectively.
In particular, in this notation, $x_0$ corresponds to $s_i$ and $y_0$ corresponds to $s_j$.
\end{corollary}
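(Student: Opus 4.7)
The plan is to reduce to a toric-geometry computation for a 2-dimensional lattice polytope. By the monoid isomorphism $\ktT_{ij}\cong T'_{ij}$ recorded immediately before the corollary, it suffices to describe the defining ideal of $\spec k[T'_{ij}]$, where $T'_{ij}=\ZZ^3\cap C(Q_{ij})$ and $Q_{ij}$ is the quadrilateral with vertices $(0,0),(0,1),(n,0),(m,1)$. The height-one lattice points of $C(Q_{ij})$ are precisely $x_k=(k,0,1)$ for $k=0,\dots,n$ on the bottom edge and $y_k=(k,1,1)$ for $k=0,\dots,m$ on the top edge; these are the generators of the semigroup $T'_{ij}$.

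The first step is to check that each listed binomial lies in the toric ideal $J$ of $T'_{ij}$. This is immediate from the additive identifications $(k_1,0,1)+(k_2,0,1)=(l_1,0,1)+(l_2,0,1)$ whenever $k_1+k_2=l_1+l_2$ (yielding \eqref{eq first e}), the analogous statement on the top edge (yielding \eqref{eq second e}), and $(k,0,1)+(0,1,1)=(0,0,1)+(k,1,1)$ (yielding \eqref{eq third e}). Thus, letting $I$ denote the ideal generated by \eqref{eq first e}--\eqref{eq third e}, we have $I\subseteq J$ and $V(I)\supseteq V(J)=\spec k[T'_{ij}]$.

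For the reverse inclusion I would first study the lattice $L$ of integer relations among the lattice points of $Q_{ij}$: it has rank $(n+1)+(m+1)-3=n+m-1$ and decomposes into the rank-$(n-1)$ sublattice of within-bottom relations (generated by the binomials in \eqref{eq first e}), the rank-$(m-1)$ sublattice of within-top relations (generated by those in \eqref{eq second e}), and one extra generator which can be taken to be the mixed relation $x_1+y_0-x_0-y_1$ corresponding to the $k=1$ instance of \eqref{eq third e}. The remaining corner relations $x_k+y_0-x_0-y_k$ are then sums of $k$ copies of the $k=1$ relation modified by within-row relations, so together the three families span $L$.

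The main obstacle is to promote this lattice-level statement to ideal-level generation: a spanning set of $L$ does not automatically yield a system of generators of the toric binomial ideal $J$. The cleanest route would be to show directly that $I$ is prime of Krull dimension $3=\dim\spec k[T'_{ij}]$. One can first verify $V(I)=V(J)$ set-theoretically by a short case analysis on whether $x_0$ and $y_0$ vanish: when $x_0\ne 0$ the relations \eqref{eq third e} determine $y_k=y_0x_k/x_0$, and \eqref{eq first e} then cuts out precisely the toric variety; the case $y_0\ne 0$ is symmetric; and if $x_0=0$ then \eqref{eq first e} forces $x_k=0$ for all $k$ (inductively from $x_1^2=x_0x_2=0$), reducing to the toric variety of the top edge, and analogously for $y_0=0$. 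Primality of $I$ could then be established by constructing an explicit ring homomorphism $k[\bfx,\bfy]/I\to k[T'_{ij}]$ sending $x_k,y_k$ to the corresponding monomials and checking injectivity via a normal-form argument on monomials modulo $I$; combined with $V(I)=V(J)$ and primality of $J$, this forces $I=J$.
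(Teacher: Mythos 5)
The paper offers no argument for this corollary beyond the isomorphism $\ktT_{ij}\cong T'_{ij}$ stated just before it, so your reduction to the monoid $T'_{ij}=\ZZ^3\cap C(Q_{ij})$ matches the paper's (implicit) approach, and your verification that the listed binomials lie in the toric ideal $J$ of $T'_{ij}$ is correct. The problem is your closing step. The ideal $I$ generated by \eqref{eq first e}--\eqref{eq third e} is in general a \emph{proper} subideal of $J$, so it is not prime (nor even radical, granted $V(I)=V(J)$), and the plan ``prove $I$ prime and conclude $I=J$'' cannot be carried out. To see this, grade the polynomial ring by $\deg x_k=(k,0,1)$ and $\deg y_k=(k,1,1)$ in $\ZZ^3$. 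Every generator of $I$ is homogeneous with last coordinate $2$, and the only monomial whose multidegree has last coordinate $0$ is $1$; hence the graded piece $I_{(2,1,2)}$ is spanned by the \emph{generators} of multidegree exactly $(2,1,2)$, that is, by $x_2y_0-x_0y_2$ alone (or it is zero when $\min\{n,m\}<2$). But for $n\geq 1$ and $m\geq 2$ the binomial $x_1y_1-x_0y_2$ lies in $J$, because $(1,0,1)+(1,1,1)=(0,0,1)+(2,1,1)$, and it is not proportional to $x_2y_0-x_0y_2$; hence $x_1y_1-x_0y_2\in J\setminus I$. (The symmetric binomial $x_1y_1-x_2y_0$ works for $n\geq2$, $m\geq1$; the same omission already occurs in the paper's own Example~\ref{13 example}, where the minor $u_{B_1}u_A-u_{B_2}u_w$ of the toric ideal is absent from the listed equations.) The ideal $J$ is in fact generated by all $2\times 2$ minors of the scroll matrix with rows $(x_0,\dots,x_{n-1},y_0,\dots,y_{m-1})$ and $(x_1,\dots,x_n,y_1,\dots,y_m)$, so the mixed binomials $x_ky_l-x_{k'}y_{l'}$ with $k+l=k'+l'$ and $l,l'\geq 1$ would have to be added to the list before any ideal-theoretic generation statement can hold.

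Consequently the statement can only be established in a weaker, set-theoretic (or up-to-saturation) reading, for which your middle step is the right tool; but there is a slip there as well: if $x_0=0$, the relations \eqref{eq first e} force $x_k=0$ only for $k<n$ (inductively from $x_k^2=x_{k-1}x_{k+1}$), while nothing controls $x_n$ --- correctly so, since the orbit closure attached to the face of $C(Q_{ij})$ through $(n,0,1)$ lies on the variety and has $x_0=\dots=x_{n-1}=0$ with $x_n\neq 0$. So that case analysis must be replaced by an argument showing that every point of $V(I)$ defines a semigroup homomorphism on $T'_{ij}$, rather than by forcing coordinates to vanish. Your lattice-level observation that the listed binomials span the relation lattice is correct, but, as you anticipate, it does not yield generation of the ideal, and in this instance it genuinely cannot.
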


\begin{proposition}\label{pro image h g}
The image of $o^*$ contains all the  degree $n$ parts of all the equations in Corollary \ref{cor eq of tij}.
\end{proposition}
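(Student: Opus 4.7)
My plan is to prove Proposition~\ref{pro image h g} by directly constructing, for each binomial in Corollary~\ref{cor eq of tij}, a linear relation $R_{\bfa,\bfk}\in\cR$ whose image under the obstruction map $o$ realizes that binomial's degree $n$ part in $W_n$. In contrast to Proposition~\ref{prop image psi}, where loop equations arise from cycles in the 2-skeleton of $P$, the local equations are concentrated on a single compact edge $d^{ij}$; the corresponding cycles in $L_k(E^{nR}_\kbb)$ are boundaries and hence invisible to $H_1(L_k(E^{nR}_\kbb))$. Accordingly, the argument would bypass the homological route of Proposition~\ref{prop image psi} and work directly from formula~\eqref{eq obs map gen}, driven by the explicit description of $\tla$ in Proposition~\ref{prop tildelam}.

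For a fixed edge $d^{ij}$ and each $k\in\{1,\ldots,n\}$, I would pick a character $c(k)\in\tail(P)^\vee\cap M$ with $\lan c(k),d^{ij}\ran=\lan kc_{ij},d^{ij}\ran$ and $v(c(k))\in\{v^i,v^j\}$; the Hilbert basis element $[c(k),\eta_\ZZ(c(k))]$ will play the role of $x_k$. By arranging the paths in Definition~\ref{d:etatilde(c)} to stay on $d^{ij}$, Proposition~\ref{prop tildelam} places the relevant $\tla$'s inside the submonoid $\ktT_{ij}$, i.e.\ in the $\NN$-span of the $\kLp(\ell c_{ij},d^{ij})$ and $s_{v^i},s_{v^j}$. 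For a type~1 equation $x_{k_1}x_{k_2}=x_{l_1}x_{l_2}$ with $k_1+k_2=l_1+l_2\leq n$, taking $\bfk$ and $\bfa$ to encode $c(k_1)$ and $c(k_2)$ gives $c_\bfa+c_\bfk$ representing $(k_1+k_2)c_{ij}$; the polynomial $u^{\tla(\bfa+\bo(\bfk))+\tla(\bfk)}-u^{\tla(\bfa+\bfk)}$ from \eqref{eq obs pol} then collapses, via Lemma~\ref{r:projectionOfSum} and Proposition~\ref{prop tildelam}, to a binomial in the edge generators that matches $x_{k_1}x_{k_2}-x_{l_1}x_{l_2}$ modulo $\widetilde{\cJ}$. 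Type~2 equations are identical with the characters $-kc_{ij}$, and the type~3 equations $x_ky_0-y_kx_0$ arise from choosing $\bfk$ and $\bfa$ on opposite sides of the edge, so that $\bo(\bfa+\bfk)$ crosses between $v^i$ and $v^j$.

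The principal obstacle I anticipate is coherence: Proposition~\ref{prop tildelam} depends on independent choices of path from $v_*$ to $v(c_\bfk)$, $v(c_\bfa+c_\bfk)$ and $v(c_\bfa+\bo(\bfk))$, and all three instances of $\tla$ appear together in \eqref{eq obs pol}; a careless choice causes the expected cancellation to fail. I must therefore fix uniformly edge-localized paths so that the three $\tla$'s simultaneously land in $\ktT_{ij}$. A secondary subtlety is that when $(k_1+k_2)c_{ij}$ falls outside the degree-one range of elements tied to $d^{ij}$, the decomposition $\bo(\bfa+\bfk)$ may spill into Hilbert basis elements with vertex $v(c)\notin\{v^i,v^j\}$; this spill-over has to be reabsorbed using the relations already exploited in the proof of Proposition~\ref{pro 2dim fin gen}, which show that any two edge-localized $\kLp$'s of the same $t_{ij}$-coefficient differ only by $s_{v^i}$ and $s_{v^j}$ summands. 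Once this alignment is arranged, extracting the degree $n$ part of the resulting binomial is a direct application of Lemma~\ref{lem trivial}, and it is simply the local equation specified in Corollary~\ref{cor eq of tij}.
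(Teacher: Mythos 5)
Your overall strategy coincides with the paper's: both proofs realize each local binomial as the obstruction image of an explicit (difference of) relation(s) $R_{\bfa,\bfk}$, read off from formula \eqref{eq obs pol} via the explicit description of $\tla$. The difference is in execution. You attack the general polyhedron head-on and therefore run into the two obstacles you name — coherence of the paths entering $\tla(\bfk)$, $\tla(\bfa+\bo(\bfk))$, $\tla(\bfa+\bfk)$, and the possibility that $\bo(\bfa+\bfk)$ involves Hilbert basis elements not localized at the edge — and your plan leaves both unresolved (note also that your chosen characters $c(k)$ must actually be $\NN$-combinations of the Hilbert basis elements $c_1,\dots,c_r$ for $\bfa,\bfk\in\NN^r$ to make sense, which is exactly where the spill-over re-enters). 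The paper instead first reduces to the one-dimensional polytope $\kP=[-\frac{a_1}{b_1},\frac{a_2}{b_2}]$, invoking the construction of the perpendicular auxiliary characters from the proof of Lemma \ref{lem pom lem kon} to justify the reduction; in that model case the Hilbert basis is exactly $\{[k,\ketaZ(k)]\}_{k=-b_2,\dots,b_1}$, Example \ref{ex dij} gives $\tla(e_n+e_{-k})=\kLp(k,d)$ on the nose, and the choices $\bfk_1=e_n+e_{-k_1}$, $\bfa_1=e_{-l_1}$ (and their analogues for the other two types) produce the binomials of Corollary \ref{cor eq of tij} with no path or spill-over issues at all. So your approach is viable but, as written, incomplete precisely where the paper's reduction does the work; supplying that reduction (or a uniform edge-localized path convention plus the reabsorption argument you sketch) would close the gap.
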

\begin{proof}
Let us first consider  the case when $\kP$ is a bounded $1$-dim polytope, i.e. a line segment with $0\in \inter(\kP)$. More precisely, we assume that
 $\kP=d=[v,w]=[-\frac{a_1}{b_1},\frac{a_2}{b_2}]$ with $a_i,b_i>0$ and such that $\frac{a_1}{b_1}$ and $\frac{a_2}{b_2}$ are not integers (the same setting as in Example \ref{ex dij}). The general case follows easily from this one using the proof of Lemma \ref{lem pom lem kon}. 
 We have $\sigma^\vee=\langle (-b_2,a_2),(b_1,a_1)\rangle$ with the
 Hilbert basis equal to
 \[
   \{[-b_2,\ketaZ(-b_2)],\dots,[-1,\ketaZ(-1)],[1,\ketaZ(1)],\dots,[b_1,\ketaZ(b_1)]\}.
 \]
Thus $r$ is equal to $b_1+b_2$ in this case and we write $e_{-b_2},e_{-b_2+1},\dots,e_{-1},e_{1},e_{2}\dots,e_{b_1}$ for the components of $\bfa,\bfk\in \NN^r$.

Using the notation from Corollary \ref{cor eq of tij} we have that
\[
  \ketaZ(1)=\cdots =\ketaZ(n)=1,~~~~\ketaZ(-1)=\cdots =\ketaZ(-m)=1.
\]
Let $k_1, k_2,l_1,l_2\in \NN$ be such that $1\leq k_1+l_1=k_2+l_2\leq n$ and $k_1\leq l_1$, $k_2\leq l_2$. We define 
\[
  \bfk_1:=e_n+e_{-k_1},~~\bfa_1:=e_{-l_1},~~\bfk_2:=e_n+e_{-k_2},~~\bfa_2:=e_{-l_2}.
\] 
For $i=1,2$ we have
\[
  \tla(\bfk_i)=\kteta(n,-k_i)=\kLp(k_i,d),
\]
where the last equality was proven in Example \ref{ex dij}.
Moreover,  for $i=1,2$ we have
\[
  \tla(\bfa_i+\tbo(\bfk_i))=\tla(e_{-l_i}+e_{n-k_i})=\kLp(l_i,d).
\]
Since 
\(
  \tla(\bfa_1+\bfk_1)=\tla(\bfa_2+\bfk_2)
\)
we obtain 
\begin{eqnarray*}
  o^*(R_{\bfa_1,\bfk_1}-R_{\bfa_2,\bfk_2})&=&\Big(u^{\tla(\bfa_1+\tbo(\bfk_1))}u^{\tla(\bfk_1)}-u^{\tla(\bfa_1+\bfk_1)} \Big)-\Big(u^{\tla(\bfa_2+\tbo(\bfk_2))}u^{\tla(\bfk_2)}-u^{\tla(\bfa_2+\bfk_2)} \Big)\\
&=&  u^{\kLp(k_1,d)}u^{\kLp(l_1,d)}-u^{\kLp(k_2,d)}u^{\kLp(l_2,d)},
\end{eqnarray*}
from which we see that the $n$-th parts of the equations \eqref{eq first e} are in the image of $o^*$.
Defining 
\[
  \bfk'_1:=e_{-m}+e_{k_1},~~\bfa'_1:=e_{l_1},~~\bfk'_2:=e_{-m}+e_{k_2},~~\bfa'_2:=e_{l_2}
\]
gives us as above that 
\[
  o^*(R_{\bfa'_1,\bfk'_1}-R_{\bfa'_2,\bfk'_2})=u^{\kLp(-k_1,d)}u^{\kLp(-l_1,d)}-u^{\kLp(-k_2,d)}u^{\kLp(-l_2,d)},
\]
from which we see that the $n$-th parts of the equations \eqref{eq second e} are in the image of $o^*$.
Finally, denoting $\widetilde{m}:=\min\{m,n\}$ and for $k=1,\dots,\widetilde{m}$ defining 
\[
  \bfk''_1:=e_{\widetilde{m}}+e_{-k},~~\bfa''_1:=e_{1},~~\bfk''_2:=e_{-\widetilde{m}}+e_{k},~~\bfa''_2:=e_{-1}
\]
give us as above that the $n$-th parts of the equations \eqref{eq third e} are in the image of $o^*$.
\end{proof}
So to prove our main Theorem \ref{th main} we only need to show that the loop and local equations are generating all the equations of $k[\ktT]$.
This is done in the next section. 
\section{New generators of $\ktT$}\label{sec-clusterCoord}
\newcommand{\kPe}{{P^{(1)}}}
\newcommand{\krho}{\rho}
\newcommand{\krhoA}{\rho^{A}}
\newcommand{\krhoB}{\rho^{B}}
\newcommand{\krhoD}{\rho^{D}}
\newcommand{\krhoN}{\rho^{N}}
\newcommand{\kRp}{\wt{R}}

\subsection{Decomposing the 1-skeleton of $\kP$}\label{subsec-decSkeleton}
We start with some graph theoretic considerations. Denote by $\kPe$
the compact part of the 1-skeleton of the polyhedron $\kP$. It 
splits into the vertices and the interior parts of the compact edges.
We extend this to an abstract graph $\kP'$ by adding abstract edges 
$e(v,w)$ between vertices $v,w\in\kP$ such that
$[v,w]\leq\kP$ is an ordinary edge with $[v,w]\cap\kQuotD=\emptyset$. 
This graph contains the following subsets:
\begin{itemize}
\item[(V)] consisting of the vertices $v\in\kP$ being not contained in 
the lattice $\kQuotD$, the new abstract edges $e(v,w)$, 
and of the \se\ half open edges $[v,w)$ and
\item[(D)] consisting of the remaining open edges $(v,w)$, i.e.\ those such
that neither $[v,w)$ nor $(v,w]$ is \se.
\end{itemize}
While $D$ consists of isolated (open) edges, the set $V$ contains 
connected clusters,
made from vertices and half open edges. This leads to the next step.
We denote by 
\begin{itemize}
\item[(A)]
the set of those connected components of $V$ that do not contain
\se\ half open edges, i.e.\ of those consisting only of 
(non-lattice) vertices $v$ and new abstract edges $e(v,w)$ and 
\item[(B)]
the set of remaining connected components of $V$.
In particular, every component from $B$ contains at least 
one \se\ half open edge.
\end{itemize}
Altogether, our graph $\kP'\supseteq\kPe$ splits into a 
disjoint union of elements of $A$, $B$, $D$, 
and of the lattice vertices of the original polyhedron $\kP$.
The latter set might be denoted by $N$.

\begin{example}
Let $P:=\conv\{v_1,v_2,v_3,v_4\}$ with 
$v_1=(-1,0)$, $v_2=(1,0)$, $v_3=(\frac{2}{3},\frac{1}{2})$, $v_4=(-\frac{1}{6},\frac{1}{2})$ and let $$d_1:=(v_1,v_2),~~~d_2:=(v_2,v_3),~~~d_3:=(v_3,v_4),~~~d_4:=(v_4,v_1)$$ be the open edges between our vertices. 
We see that $[v_3,v_2)$ and $[v_4,v_1)$ are short half open edges and that there are no other short (half open) edges (see Example \eqref{ex-CQS}). Thus $$A=\{e(v_3,v_4)\},~~~B=\{v_3,v_4,d_2,d_4\},~~~D=\{d_1,d_3\}.$$
\end{example}

\subsection{New variables}\label{subsec-newVar}
We are going to replace the old variables $s_v$ (for vertices $v\in\kP$)
and $t_d$ (for edges $d\leq\kP$) by new ones.
They will be denoted by
$\krhoA_a$ (for $a\in A$),
$\krhoB_b$ (for $b\in B$), and
$\krhoD_d$ (for $d\in D$), and they are defined as follows:
\begin{itemize}
\item[($\krhoA$)]
If $a\in A$, then this cluster is not incident with any of
the ordinary edges, but with some of the vertices $v\in\kP$. We denote
$\krhoA_a:=s_v$ (for any such $v$).
\item[($\krhoD$)]
If $d\in D$, then this points to a single edge $d\leq\kP$. 
We set $\krhoD_d:=t_d$.
\item[($\krhoB$)]
This is the only type of the three clusters containing both
vertices $v\in\kP$ and ordinary edges $d\in\kP$. 
If $b\in B$, then we set
$\krhoB_b:=s_v=t_d$ (for any such $v,d$ being contained in $b$).
\end{itemize}
If $\mu\in\Z^{\kpr}$ is induced from a closed path along the
compact edges, e.g.\ from the boundary of a compact 2-face $F\leq\kP$, then,
for any $\kc\in\kQuot$, we used to have the equation
\[
\sum_{\nu=1}^\kpr \mu_\nu\cdot \langle c,d^\nu\rangle\cdot t_{d^\nu}=0.
\]
This turns into the relation
\[
\sum_{d\in D} \mu_d\cdot \langle c,d\rangle\cdot \krhoD_{d}
+
\sum_{b\in B} \big\langle c,\sum_{d\in b} \mu_d \,d\big\rangle 
\cdot \krhoB_{b}=0,
\]
which we will call the loop relation $R(\mu,\kc)$.

That is, compared with \cite{alt}, we keep the equations
for $t_d=\krho_d$ (appearing as $\krhoB_d$ or $\krhoD_d$) along 2-faces $F$. 
However, as in \cite{budapest}, some of the variables are 
forced to become equal (the former $t_d=\krhoB_d$ corresponding to those $d$ 
being contained in some joint $b\in B$ become $\krhoB_b$),
and now, beyond \cite{budapest}, 
we also have additional free variables $\krhoA_a$ (for $a\in A$) not appearing in the loop relations.

\subsection{New generators for $\ktT$}\label{subsec-newGenTt}
By Proposition~\ref{prop generators t},
the semigroup $\ktT$ is generated by the elements $s_v$ and $\kLp(\kc,d)$
where $v\in\kP$ are vertices, $d\leq\kP$ compact edges, and
$c\in \kQuot$.
Nevertheless, e.g.\ for proving that every relation comes either
from loops or from local relations (obtained after fixing an edge), it is
much easier to replace the generators $\kLp(\kc,d)$ (and the $s_v$)
by new ones being associated to the
new coordinates introduced in Subsection \ref{subsec-newVar}.
\begin{itemize}
\item[($A$)]
For each $a\in A$, we define $\kLp(a):=\krhoA_a$. In particular, these
elements equal certain $s_v$, i.e.\ they are contained in $\ktT$.
\item[($B$)]
For each $b\in B$, we define $\kLp(b):=\krhoB_b$. As in Case $A$, the cluster
contains certain vertices $v\in\kP$, i.e.\ $\kLp(b)=s_v\in\ktT$.
\end{itemize}
At this point, to keep track of the converse,
we have already ensured that all variables $s_v$ are among the
new generators $\kLp(a)$ or $\kLp(b)$ ($a\in A$, $b\in B$).
Moreover, if $d=[v,w)$ is a \se\ half open edge, then there are two cases:
\\[0.5ex]
First, if $w\notin N$ 
(this is equivalent to $(v,w]$ being a \se\ half open edge, too),
then for $\kc\in\kQuot$ with $\langle c,d\rangle\geq 0$ we know
by Definition~\ref{def 53} that
\[
\kLp(c, d)= \lan c,d\ran \,t_{d}+\{\lan
c,w\ran\}s_{w}-\{\lan c,v\ran\}s_{v}
=\big(\lceil\lan\kc,w\ran\rceil - \lceil \lan\kc,v\ran\rceil\big)\cdot \krhoB_b
\in\NN\cdot \krhoB_b
\]
with $d,v,w\in b$ and $b\in B$, hence
$t_d=s_v=s_w=\krhoB_b$.
\\[0.5ex]
Second, if $w\in N$, then $s_w=0$, hence
\[
\kLp(c, d)= \lan c,d\ran \,t_{d} -\{\lan c,v\ran\}s_{v}
=
\big(\lan\kc,w\ran - \lceil \lan\kc,v\ran\rceil\big)\cdot \krhoB_b
\in\NN\cdot \krhoB_b
\]
with $d,v\in b$ and $b\in B$, hence $t_d=s_v=\krhoB_b$.
\\[0.5ex]
Thus, in both cases, the old $\kLp(c, d)$ together with the elements
$s_v$, on the one hand, 
and the new elements $\kLp(a)$ and $\kLp(b)$, on the other, can be 
mutually expressed 
using just semigroup operations.
It remains to treat the non-short edges -- however, here we do not change
anything at all:
\begin{itemize}
\item[($D$)]
For each $d\in D$ and $\kc\in\kQuot$ 
we stay with the usual $\kLp(\kc, d)\in\ktT$. It can be expressed as
$\kLp(\kc, d)=\lan c,d\ran \,\krhoD_{d}
+\{\lan c,w\ran\}\krho_{w}-\{\lan c,v\ran\}\krho_{v}$
where $\krho_{v},\krho_{w}$ are either $0$ (if the corresponding vertex is 
contained in $\kQuotD$), or they are coordinates of some components
from the sets $A$ or $B$.
\end{itemize}
It is now clear that our definitions imply that
\[
\ktT=\langle \kLp(a),\,\kLp(b),\,\kLp(\kc, d)\kst
a\in A,\; b\in B,\; d\in D,\mbox{ and }\kc\in\kQuot\rangle
\]
as a semigroup. That is, the new $\kLp$ still form  a generating system. 
However, we will see in Subsection \ref{subsec-relNewGen} that
their mutual relations are easier to understand.

\subsection{The relations among the new generators}\label{subsec-relNewGen}
The relations among the generators of $\ktT$ defined in 
Subsection \ref{subsec-newGenTt} split into two types.

\subsubsection{The local relations}\label{subsubsec-localRel}
We call relations among the new $\kLp$ \emph{local} if and only if they are relations
with integer coefficients among the elements $\kLp(\kc, d)\in\ktT$ 
for a single, fixed edge $d\in D$ (and finitely many $\kc\in\kQuot$).

\subsubsection{The loop relations}\label{subsubsec-loopRel}
Among the non-local, i.e.\ the global relations, there is a special class of
so-called \emph{loop} relations for any given closed path $\mu\in\Z^{\kpr}$
along the compact edges, 
e.g.\ for the boundary $\mu=\partial F$ of any compact 2-face $F\leq\kP$. For
any $\kc\in\kQuot$, the loop relation $R(\mu,\kc)$
from Subsection~(\ref{subsec-newVar}) among the coordinates
\[ 
\sum_{d\in D} \mu_d\cdot \langle c,d\rangle\cdot \krhoD_{d}
+
\sum_{b\in B} \big\langle c,\sum_{d\in b} \mu_d \,d\big\rangle 
\cdot \krhoB_{b}=0
\]
induces
\[ 
\sum_{d\in D} \mu_d\cdot \Big(\kLp(\kc, d) - \{\lan c,w\ran\}\krho_{w}
+ \{\lan c,v\ran\}\krho_{v}\Big) +
\sum_{b\in B} \big\langle c,\sum_{d\in b} \mu_d \,d\big\rangle 
\cdot \krhoB_{b}=0.
\]
Recalling that $\krho_{v},\krho_{w}$ (and $\krhoB_{b}$) belong to 
the classes $A$ or $B$, i.e.\ {\em not} to class $D$, 
we may replace them by the corresponding
$\kLp(\ldots)$, yielding the loop $\kLp$-relation
\[ 
\sum_{d\in D} \mu_d\cdot \Big(\kLp(\kc, d) - \{\lan c,w\ran\}\kLp(w)
+ \{\lan c,v\ran\}\kLp(v)\Big) +
\sum_{b\in B} \big\langle c,\sum_{d\in b} \mu_d \,d\big\rangle 
\cdot \kLp(b)=0
\]
which we will call $\kRp(\mu,\kc)$.

\begin{proposition}
\label{prop-ttildeRel}
Any integral relation among the elements
$\kLp(a)$, $\kLp(b)$, and $\kLp(\kc, d)$
with $a\in A$, $b\in B$, $d\in D$, and $\kc\in\kQuot$
is an integral linear combination of local relations
{\rm (\ref{subsubsec-localRel})}
and loop relations $\kRp(\mu,\kc)$ from
{\rm (\ref{subsubsec-loopRel})}. 
\end{proposition}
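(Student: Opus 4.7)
The plan is to work in the $\krho$-coordinates of Subsection~\ref{subsec-newVar}, where the ambient lattice $\cT^*_\ZZ(\kP)$ is by construction presented by the generators $\krhoA_a$, $\krhoB_b$, $\krhoD_d$ subject only to the loop relations $R(\mu,c)$. The strategy is threefold: translate any integer relation $r=0$ among the new $\kLp$-generators into $\krho$-coordinates, subtract off the $\kLp$-lifts of suitable loop relations to reduce to a relation whose $\krho$-expansion vanishes identically, and finally decompose that residue edge-by-edge into local relations.

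Using the translation table from Subsection~\ref{subsec-newGenTt}, one has $\kLp(a)=\krhoA_a$, $\kLp(b)=\krhoB_b$, and for each $d=(v,w)\in D$ and $c\in\kQuot$,
\[
\kLp(c,d)\;=\;\langle c,d\rangle\,\krhoD_d\;+\;\{\langle c,w\rangle\}\,\krho_w\;-\;\{\langle c,v\rangle\}\,\krho_v,
\]
where $\krho_v,\krho_w$ denote the $\krhoA$ or $\krhoB$ variable attached to the endpoint's cluster (and are $0$ if the endpoint lies in $\kQuotD$). As was already carried out in Subsection~\ref{subsubsec-loopRel}, substituting this table into the loop $\kLp$-relation $\kRp(\mu,c')$ returns precisely the $\krho$-loop relation $R(\mu,c')$, because the fractional-part boundary contributions cancel pairwise. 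Hence, if $r$ is an integer relation and $\bar r$ denotes its $\krho$-expansion, then $\bar r$ lies in the $\QQ$-span of the $R(\mu_k,c'_k)$'s; writing $\bar r=\sum_k\alpha_k R(\mu_k,c'_k)$ and setting $r':=r-\sum_k\alpha_k\kRp(\mu_k,c'_k)$ produces an integer relation whose $\krho$-expansion is identically zero.

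The final step is to decompose such an $r'=\sum n'_a\kLp(a)+\sum n'_b\kLp(b)+\sum n'_{c,d}\kLp(c,d)$ as a sum of local relations. Reading off the $\krhoD_d$-coefficient gives $\sum_c n'_{c,d}\langle c,d\rangle=0$ for every $d\in D$ separately, while the coefficient of each $\krhoA_a$ (resp.\ $\krhoB_b$) forces $n'_a$ (resp.\ $n'_b$) to equal precisely the fractional-part boundary contributions arising from the $\kLp(c,d)$-terms whose edges are incident to that cluster. Defining
\[
r'_d\;:=\;\sum_c n'_{c,d}\kLp(c,d)\;+\;\sum_c n'_{c,d}\{\langle c,v\rangle\}\kLp(\mbox{cluster of }v)\;-\;\sum_c n'_{c,d}\{\langle c,w\rangle\}\kLp(\mbox{cluster of }w)
\]
for each $d=(v,w)\in D$, each $r'_d$ is a relation inside the edge-local semigroup $\ktT_d$ of Definition~\ref{def submon} --- a local relation in the sense of Corollary~\ref{cor eq of tij} --- and straightforward bookkeeping of the $\kLp(a), \kLp(b)$-coefficients shows that $\sum_d r'_d=r'$ identically. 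Combined with the loop $\kLp$-lifts from the previous stage, this realises $r$ as an integer combination of local and loop relations. The principal technical obstacle I foresee is the integrality of the loop decomposition, i.e.\ ensuring the $\alpha_k$ may be chosen in $\ZZ$ rather than only in $\QQ$; this reduces to verifying that the kernel of the map from the free abelian group on the $\krho$'s onto $\cT^*_\ZZ(\kP)$ is $\ZZ$-generated by the integer loop relations $R(\mu,c')$ associated to primitive closed paths $\mu$ and primitive characters $c'\in \kQuot$, and the spanning-set argument (likely using boundaries of compact $2$-faces together with a primitive dual generator on each edge) will need care.
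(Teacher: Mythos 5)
Your proposal takes essentially the same route as the paper's proof: rewrite the relation in the $\krho$-coordinates, subtract off the $\kLp$-lifts $\kRp(\mu,\kc)$ of the loop relations accounting for its $\krho$-expansion, and then use the triangular structure of $\kLp(\kc,d)\mapsto\lan c,d\ran\krhoD_d+\{\lan c,w\ran\}\krho_w-\{\lan c,v\ran\}\krho_v$ to split the residue edge-by-edge into local relations; your explicit construction of the $r'_d$ and the coefficient bookkeeping just spell out what the paper compresses into ``modding out the $A$- and $B$-variables.'' The integrality issue you flag at the end is real but is equally present in the paper's own argument, which simply asserts (in Subsection~\ref{subsec-newVar}) that the only relations among the $\krho$-variables are the loop relations and that the given relation ``is the sum of certain loop relations'' without verifying that the coefficients can be taken in $\ZZ$.
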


\begin{proof}
Denote by $\kRp$ an arbitrary integral relation like 
\[
\sum_{a\in A}\lambda_a \,\kLp(a) + 
\sum_{b\in B}\lambda_b \,\kLp(b) + \hspace{-0.5em}
\sum_{d\in D,\, \kc\in\kQuot}\hspace{-0.5em}\lambda_{d,c} \,\kLp(\kc, d)
\;=\;0.
\]
Then we use ($A$), ($B$), and ($D$) of Subsection~\ref{subsec-newGenTt},
i.e.\
\begin{eqnarray*}
  \kLp(a)&=&\krhoA_a, \\
  \kLp(b)&=&\krhoB_b,\text{~and}\\
  \kLp(\kc, d)&=&\lan c,d\ran \,\krhoD_{d}+\{\lan c,w\ran\}\krho_{w}-\{\lan c,v\ran\}\krho_{v}
\end{eqnarray*}
to write this as
\[
\sum_{a\in A}\lambda_a \,\krhoA_a +
\sum_{b\in B}\lambda_b \,\krhoB_b + \hspace{-0.5em}
\sum_{d\in D, \,\kc\in\kQuot}\hspace{-0.5em}\lambda_{d,c} \cdot
\Big(\lan c,d\ran \,\krhoD_{d}
+\{\lan c,w\ran\}\krho_{w}-\{\lan c,v\ran\}\krho_{v}\Big)
\;=\;0
\]
with $v=v(d), w=w(d)\in A\cup B$.
Now, we know that this relation among the $\krho$-coordinates is
the sum of certain loop relations $R(\mu,\kc)$ 
from Subsection~(\ref{subsec-newVar}) -- note that
it suffices to take only $\mu:=\partial F$ for some compact 2-faces 
$F\leq \kP$. We denote by $\kRp'$ the corresponding sum of the associated
loop $\kLp$-relations $\kRp(\mu,\kc)$.
By construction, we know that the original $\kRp$ and the sum of 
loop relations $\kRp'$ coincide after being transformed to
relations among the $\krho$-variables.
Modding out the $A$- and $B$-variables, we can then spot a linear combination
of local relations in the sense of
(\ref{subsubsec-localRel}) generating the difference. 
\\[1ex]
Note that the point for 
everything working as it has been said is the triangular structure,
i.e.\ the fact that $\kLp(\kc, d)\mapsto 
\lan c,d\ran \,\krhoD_{d}
+\{\lan c,w\ran\}\krho_{w}-\{\lan c,v\ran\}\krho_{v}$ involves only a single
$d$ and elements of $A\cup B$ where the map $\kLp\mapsto\krho$ is trivial.
\end{proof}

Thus we conclude the proof of  
Theorem \ref{th main}. Indeed, from Corollary \ref{cor loop cor}, Proposition \ref{pro image h g} and Proposition \ref{prop-ttildeRel} it follows the surjectivity of map $o^*$, which proves the theorem.

\end{document}